\documentclass[a4paper,11pt]{amsart}	
\usepackage[foot]{amsaddr}

\usepackage{geometry}
\usepackage[T1]{fontenc}
\usepackage{lmodern}
\usepackage{microtype}

\usepackage{mathtools}
\usepackage{amssymb}
\usepackage{mathrsfs}
\usepackage{bm}
\usepackage{nccmath}
\allowdisplaybreaks[1]

\usepackage{graphicx, float}
\graphicspath{{Images/}}
\usepackage[
  font=small,
  labelfont=bf
]{caption}

\usepackage{enumitem}
\usepackage{array,booktabs,longtable}

\usepackage[sort,nocompress]{cite}
\usepackage{xcolor}

\definecolor{darkcyan}{RGB}{0,75,75}
\definecolor{citationred}{RGB}{125,70,70}
\definecolor{doiplum}{RGB}{130,70,115}

\usepackage[
  colorlinks=true,
  linkcolor=darkcyan,
  citecolor=citationred,
  urlcolor=doiplum
]{hyperref}

\theoremstyle{plain}
\newtheorem{theorem}{Theorem}[section]
\newtheorem*{theorem*}{Theorem}
\newtheorem{lemma}[theorem]{Lemma}
\newtheorem{corollary}[theorem]{Corollary}
\newtheorem{proposition}[theorem]{Proposition}

\theoremstyle{definition}
\newtheorem{definition}[theorem]{Definition}

\theoremstyle{remark}

\theoremstyle{plain}
\newtheorem{maintheorem}{Theorem}

\newtheorem{maincorollary}[maintheorem]{Corollary}

\usepackage{xcolor}
\usepackage[
  capitalize,
  nameinlink
]{cleveref}
\numberwithin{equation}{section}

\newcommand{\blackcrefname}[3]{%
  \crefname{#1}{\textcolor{black}{#2}}{\textcolor{black}{#3}}%
  \Crefname{#1}{\textcolor{black}{#2}}{\textcolor{black}{#3}}%
}

\blackcrefname{appendix}{Appendix}{Appendices}

\blackcrefname{theorem}{Theorem}{Theorems}
\blackcrefname{lemma}{Lemma}{Lemmas}
\blackcrefname{algorithm}{Algorithm}{Algorithms}
\blackcrefname{corollary}{Corollary}{Corollaries}
\blackcrefname{proposition}{Proposition}{Propositions}
\blackcrefname{definition}{Definition}{Definitions}
\blackcrefname{remark}{Remark}{Remarks}

\blackcrefname{maintheorem}{Theorem}{Theorems}
\blackcrefname{maincorollary}{Corollary}{Corollaries}

\blackcrefname{section}{Section}{Sections}

\newcommand{\norm}[1]{\left\lVert #1\right\rVert}
\newcommand{\tnorm}[1]{\lVert #1\rVert}

\newcommand{\abs}[1]{\left \lvert #1\right\rvert}
\newcommand{\tabs}[1]{\lvert #1\rvert}
\newcommand{\scalar}[2]{\left \langle #1,#2\right\rangle}
\newcommand{\hnorm}[1]{\left[#1 \right]}

\DeclareMathOperator{\dist}{dist}

\let\Re\undefined
\let\Im\undefined
\DeclareMathOperator{\Re}{Re}
\DeclareMathOperator{\Im}{Im}

\renewcommand{\d}{\mathrm d}
\newcommand{\A}{\mathrm A}
\newcommand{\cA}{\mathcal A}

\newcommand{\sL}{\mathscr L}

\DeclareMathOperator{\Herg}{H}

\title[Asymptotics of Bergman Polynomials under Finite Regularity]{Asymptotics of Weighted Bergman Polynomials under Finite Regularity}

\author{Lennart A. Hübner}

\address{KU Leuven, Department of Mathematics,
Celestijnenlaan 200B, 3001 Leuven, Belgium}

\email{lennart.huebner@kuleuven.be}

\begin{document}

\begin{abstract}
We study strong asymptotics of planar orthogonal polynomials with respect to fixed weighted area measures on Jordan domains. The aim is to understand how the regularity of the domain and the weight enter the large-degree asymptotic behavior of these polynomials. 

For domains with real-analytic boundaries, Suetin established leading-order asymptotics for Hölder continuous weights, while Hedenmalm--Wennman obtained a full asymptotic expansion for real-analytic weights. We interpolate between these results while allowing both the boundary and the weight to have finite smoothness. The resulting expansions have a length determined by the available regularity and hold in the exterior region and, under sufficient regularity conditions, in a shrinking neighborhood of the boundary.
\end{abstract}
\newgeometry{margin=32mm,top=18mm,bottom=20mm,head=20pt}

\clearpage\maketitle
\thispagestyle{empty}

\makeatletter
\renewcommand{\l@subsection}{%
  \@tocline{2}{0pt}{3pc}{3pc}{}}
\renewcommand{\l@subsubsection}{%
  \@tocline{3}{0pt}{5pc}{3pc}{}}
\makeatother

\vspace{-.5cm}
\begingroup
  \renewcommand{\bm}[1]{#1}
  \tableofcontents
\endgroup

\restoregeometry
\clearpage

\section{Introduction and Main Results}\label{sec:intro}
Let \(\mathscr D\) be a Jordan domain in the complex plane and \(\omega\not\equiv 0\) a non-negative continuous weight function on \(\overline{\mathscr D}\). We write \(L^2(\mathscr D,\omega)\) for the standard \(L^2\)-space induced by the measure \(1_\mathscr D \omega\d \A\), where \(\d\A\) denotes the area measure normalized by \(1/\pi\). For two functions \(f,g\in L^2(\mathscr D,\omega)\) the corresponding inner product is defined by
\[\scalar{f}{g}_{\mathscr D,\omega}\coloneqq \int _\mathscr D f(z) \overline{g(z)}\, \omega(z)\,\d \A(z).\]
We denote by \(P_N\) the \(N\)-th orthogonal polynomial, that is the unique polynomial of degree \(N\) which has positive leading coefficient, and satisfies \(\tnorm{P_N}_{L^2(\mathscr D,\omega)}=1\) as well as
\[\scalar{\smash{z^k}}{P_N}_{\mathscr D,\omega}=0,\qquad k=0,\dots, N-1.\]
These polynomials are commonly referred to as \textit{weighted Bergman polynomials}.

The simplest example is obtained by taking \(\mathscr D\) to be the unit disk \(\mathbb D\) and \(\omega\equiv1\). In this case,
\[P_N(z)=\sqrt{N+1}\,z^N.\]
For general domains and weights, explicit formulas of this kind are rarely available. A notable exception is provided by \cite{Akemann2021Gegenbauer}, where the Bergman polynomials on ellipses for a certain class of weight functions are represented explicitly in terms of Gegenbauer polynomials. Nevertheless, the large-degree behavior of \(P_N\) can often be described in terms of the geometry of \(\mathscr D\) and the boundary behavior of \(\omega\).

For now, let us consider the case in which \(\mathscr D\) has a real-analytic boundary and \(\omega\equiv 1\). We denote by \(\phi\) the exterior conformal mapping  from \(\mathscr D^e\coloneqq \mathbb C \setminus \overline{\mathscr D}\) onto  \(\mathbb D^e\coloneqq \mathbb C \setminus \overline{\mathbb D}\),  normalized by \(\phi(\infty)=\infty\) and \(\phi'(\infty)>0\). Since \(\partial \mathscr D\) is real-analytic, the conformal map \(\phi\) extends holomorphically across the boundary. The classical work by Carleman \cite{carleman1922classic} shows that the orthogonal polynomials can be approximated on some neighborhood \(U\) of \(\mathscr D^c\coloneqq \mathbb C\setminus \mathscr D\). In fact, there exists \(c>0\) such that, uniformly for \(z\in U\),
\[P_N(z)=\sqrt{N+1}\phi'(z)\phi^N(z)\Bigl(1+\mathcal O\Bigl(e^{-cN}\Bigr)\Bigr).\]
In essence, the asymptotic behavior of \(P_N\) is obtained by a change of variables via the exterior conformal map \(\phi\). For a modern reference, see \cite[p. 12]{gaier1987complex}.

Motivated by this picture, our aim is to determine how the regularity of \(\mathscr D\) and the weight \(\omega\) influence the asymptotic behavior of \(P_N\) as \(N\to\infty\). More precisely, we fix \(\varkappa\in\mathbb Z_{\geq 0}\) and \(\alpha\in(0,1)\), and consider a Jordan domain \(\mathscr D\) with \(C^{\varkappa+1,\alpha}\)-smooth boundary and a weight \(\omega\) which belongs to \(C^{\varkappa,\alpha}\) on a neighborhood of \(\overline{\mathscr D}\) and is strictly positive on \(\partial\mathscr D\). Under these regularity assumptions, we derive a finite asymptotic expansion for \(P_N\), together with explicit error estimates,  in the exterior of \(\mathscr D\) and in a shrinking neighborhood of its boundary, see \cref{thm:Main1,thm:Main2} below.

\subsection{Historical Background}\label{sec:history}
Motivated by an extension of Carleman's theorem due to Korovkin \cite{korovkin1947asymptotic} to a special class of analytic weights, Suetin studied the asymptotic behavior of Bergman polynomials associated with Hölder continuous weights on real-analytic Jordan domains. By introducing a multiplicative Szeg\H{o}-type correction function, he extended the existing asymptotic representation to positive weights of Hölder class \(C^{0,\alpha}\). More precisely, in \cite{suetin1974polynomials}, he established
\begin{equation}\label{eq:Suetin-asymp}
P_N(z)=\sqrt{N+1}\phi'(z)\phi^N(z)e^{V(z)}\biggl(1+\mathcal O\biggl(\Bigl(\mfrac{\log N}{N}\Bigr)^{\alpha/2}\biggr)\biggr),
\qquad z\in K\subset\widehat{\mathbb C}\setminus \overline{\mathscr D},
\end{equation}
where \(K\) is compact in the Riemann sphere \(\widehat{\mathbb C}\), and \(V\) is holomorphic in the exterior of \(\mathscr D\). The correction function \(V\) is characterized by the boundary condition \(2\Re V=-\log\omega\) on \(\partial\mathscr D\), together with the normalization \(V(\infty)\in\mathbb R\). Its role is to compensate for the effect of the weight \(\omega\) near the boundary.
The proof of \eqref{eq:Suetin-asymp} relies on \(L^2\)-estimates over a shrinking annular neighborhood of the boundary. The main difficulty concerns the domain on which the correction function is defined. Since \(\omega\) is only Hölder continuous, \(V\) need not extend across \(\partial\mathscr D\) into \(\mathscr D\). To overcome this difficulty, Suetin introduced the dilated correction term
\[V_N(z)=V\bigl(\phi^{-1}(\phi(z)/r_N)\bigr),\]
where \(r_N<1\) and \(1-r_N=\mathcal O(\frac{\log N}{N})\). However, this dilation argument does not by itself yield substantial improvements under stronger regularity assumptions on the weight. 

More recently, Hedenmalm and Wennman considered the case of real-analytic boundaries and weights. In \cite{hedenmalm2024riemann}, they analyzed the asymptotic behavior using the foliation flow method developed in \cite{hedenmalm2017planar}. This approach allowed them to construct a full asymptotic expansion of the orthogonal polynomials. More precisely, if \(\partial\mathscr D\) and the positive weight \(\omega\) are real-analytic, then there exist bounded holomorphic functions \(V_j\), defined on a neighborhood of \(\mathscr D^c\), such that, for every \(\varkappa\in\mathbb Z_{\geq 0}\) and any \(R>0\),
\begin{equation}\label{eq:HW-asymp}
P_N(z)=\sqrt{N+1}\phi'(z)\phi^N(z)\exp\Bigl({\textstyle\sum_{j=0}^{\varkappa}N^{-j}V_j(z)}\Bigr)\Bigl(1+\mathcal O\bigl(N^{-\varkappa-1}\bigr)\Bigr),
\end{equation}
holds uniformly for all \(z\) with \(\dist(z,\mathscr D^c)\leq\frac{R\log N}{N}\).
Here, the leading correction function \(V_0\) coincides with Suetin's function \(V\), whereas the higher-order correction functions are determined by analogous boundary value problems. In contrast to \eqref{eq:Suetin-asymp}, this expansion is not merely locally uniform in the exterior of \(\mathscr D\), but remains valid in a shrinking boundary layer extending slightly into \(\mathscr D\). This is a consequence of the real-analyticity of \(\partial\mathscr D\) and \(\omega\), which ensures that the exterior conformal map and the correction functions \(V_j\) extend holomorphically across the boundary.

The preceding results primarily illustrate how the regularity of the weight influences the asymptotic behavior of the orthogonal polynomials. A separate line of research concerns the effect of reduced boundary regularity. In the unweighted case \(\omega\equiv1\), several boundary regularity classes have been considered. Suetin \cite{Suetin1969AreallyOrthogonal,suetin1974polynomials} established corresponding asymptotic formulas for \(C^{\varkappa,\alpha}\)-smooth boundaries, whereas Stylianopoulos \cite{Stylianopoulos2013StrongAsymptotics} treated piecewise analytic boundaries without cusps. Further refinements of Carleman-type asymptotics and their applications, for example to the distribution of zeros, were obtained by Beckermann, Dragnev, Gustafsson, Mi\~na-Díaz, Putinar, Saff, and Stylianopoulos \cite{GustafssonEtAl2009Archipelago, BeckermannStylianopoulos2018Grunsky, DragnevMinaDiaz2010Carleman, MinaDiaz2008AsymptoticIntegral, MinaDiaz2023Bergman}.

\subsection{Main Results}\label{sec:main-result}
Whereas Suetin and Hedenmalm--Wennman treat the two extreme cases of Hölder continuous and real-analytic weights, respectively, no analogous results appear to be available for intermediate degrees of regularity. Moreover, existing results generally assume that either the boundary is real-analytic or the weight is constant. To the best of our knowledge, no finite higher-order expansion with quantitative remainder estimates has previously been established when both the boundary and the weight have only finite regularity.

The aim of this paper is to develop an asymptotic expansion for the orthogonal polynomials \(P_N\) when both the boundary and the weight have finite regularity. In \cref{thm:Main1,thm:Main2} as well as \cref{coro:main} we summarize the main results established in \cref{theo:Expansion1,theo:Expansion2}. The corresponding proofs can be found at the end of \cref{sec:asymp expansion}
.

We fix \(\varkappa\in\mathbb Z_{\geq 0}\) and \(\alpha\in(0,1)\), and assume that \(\mathscr D\) is a Jordan domain with \(C^{\varkappa+1,\alpha}\)-smooth boundary. Further, let \(\phi\) denote the exterior conformal mapping normalized at infinity. That is, \(\phi\) is the conformal mapping from the exterior \(\mathscr D^e=\mathbb C\setminus\overline{\mathscr D}\) onto the exterior of the unit disk \(\mathbb D^e=\mathbb C\setminus\overline{\mathbb D}\), satisfying \(\phi(\infty)=\infty\) and \(\phi'(\infty)>0\). This mapping is unique by the Riemann mapping theorem and extends \(C^{\varkappa+1,\alpha}\)-smoothly to the boundary \(\partial\mathscr D\) by the Kellogg--Warschawski theorem \cite[Thm.~3.6]{pommerenke2013boundary}. Further details are given in \cref{sec:conf-approx}.

\begin{maintheorem}\label{thm:Main1}
Let \(\varkappa\in\mathbb Z_{\geq 0}\) and \(\alpha\in(0,1)\). Let \(\mathscr D\) be a \(C^{\varkappa+1,\alpha}\)-smooth Jordan domain, and let \(\omega\) be a non-negative \(C^{\varkappa,\alpha}\)-smooth weight on a neighborhood of \(\overline{\mathscr D}\), positive on \(\partial\mathscr D\). Then there exist bounded functions \(\nu_0,\dots,\nu_\varkappa\), holomorphic on \(\mathbb D^e\), which extend \(C^{\varkappa-j,\alpha}\)-smoothly on \(\mathbb T\) and are real-valued at infinity.
Further, for every \(\beta\in(0,\alpha)\) and every closed set \(K\subset\mathscr D^c\), uniformly for \(z\in K\),
\begin{equation}\label{eq:into-asymp-exp1}
P_N(z) = \sqrt{N+1}\phi'(z)\phi^N(z)\exp\Bigl({\textstyle \sum_{j=0}^\varkappa N^{-j} \nu_{j}(\phi(z))}\Bigr) \Bigl(1+\mathcal O\bigl(N^{\varepsilon-\varkappa-\beta}\bigr)\Bigr),
\end{equation}
Here \(\varepsilon=0\) if \(K\cap\partial\mathscr D=\varnothing\), and \(\varepsilon=\frac12\) otherwise.
\end{maintheorem}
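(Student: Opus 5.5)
We would follow the Hedenmalm--Wennman strategy: build an approximate orthogonal polynomial (a quasipolynomial) from the ansatz in \eqref{eq:into-asymp-exp1}, control how far it is from being orthogonal, and then pass to \(P_N\) by a Hilbert-space perturbation argument. All three steps must be redone so that they use only \(C^{\varkappa+1,\alpha}\) and \(C^{\varkappa,\alpha}\) regularity instead of real-analytic continuation across \(\partial\mathscr D\).

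\textbf{Step 1: construct the \(\nu_j\).}
We pull the problem back to \(\mathbb D^e\) via \(\phi\).

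For \(\nu_0\), we take the bounded holomorphic function on \(\mathbb D^e\) with \(2\Re\nu_0=-\log(\omega\circ\phi^{-1})\) on \(\mathbb T\) and \(\nu_0(\infty)\in\mathbb R\). This is Suetin's \(V\) composed with \(\phi^{-1}\). It is obtained by the exterior Herglotz transform \(\Herg\) of the boundary datum. Since \(\omega\circ\phi^{-1}|_{\mathbb T}\in C^{\varkappa,\alpha}\) and the Herglotz transform preserves \(C^{k,\alpha}\) on \(\mathbb T\) (Privalov), we get \(\nu_0\in C^{\varkappa,\alpha}\) up to \(\mathbb T\).

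For the higher \(\nu_j\), we use the foliation-flow (or Laplace-method) computation. Near \(\mathbb T\), the weighted norm \(\int|q_N|^2\omega\,\d\A\) of the quasipolynomial
\[q_N=\sqrt{N+1}\,\phi'\phi^N e^{\sum N^{-j}\nu_j\circ\phi}\]
localizes to a layer of width \(\sim 1/N\) inside \(\mathscr D\). Expanding the radial integral in powers of \(N^{-1}\) gives conditions of the form \(2\Re\nu_j=F_j\) on \(\mathbb T\). Here \(F_j\) involves \(\nu_0,\dots,\nu_{j-1}\) and derivatives of \(\omega\), of the conformal data, and of \(\nu_{j'}\) normal to \(\mathbb T\). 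Each \(\nu_j\) then loses one derivative, consistent with the claimed \(C^{\varkappa-j,\alpha}\) regularity.

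Because nothing extends holomorphically inward, we extend \(\phi\) and the \(\nu_j\) across \(\partial\mathscr D\) by Taylor-type (almost holomorphic) extensions. These satisfy \(|\bar\partial f|\lesssim\dist(z,\partial\mathscr D)^{k-1+\alpha}\) for the appropriate \(k\). The quasipolynomial then extends to a smooth function on a neighborhood, which we cut off with a smooth cutoff inside \(\mathscr D\).

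\textbf{Step 2: approximate orthogonality of \(q_N\).}
We show that for every polynomial \(p\) of degree \(<N\),
\[\scalar{p}{q_N}_{\mathscr D,\omega}=\mathcal O(N^{-\varkappa-\beta})\,\tnorm{p},\qquad \tnorm{q_N}=1+\mathcal O(N^{-\varkappa-\beta}).\]
We convert the pairing into a boundary integral by Green's formula. The choice of the \(\nu_j\) kills the first \(\varkappa+1\) orders. The remainder comes from two sources: the \(\bar\partial\)-error of the extension, weighted by \(e^{-2N\,\dist}\) decay, which gives \(N^{-\varkappa-\alpha}\) up to logarithmic losses (hence \(\beta<\alpha\)); and the truncated Taylor remainders. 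We expect this step, estimating the Green's-formula error terms with only Hölder control of the top derivatives, to be the main obstacle.

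\textbf{Step 3: from \(q_N\) to \(P_N\).}
We compare with the orthogonal projection of \(q_N\) onto polynomials of degree \(\le N\), and use \(\bar\partial\)-methods (an \(L^2\) Hörmander-type estimate with weight \(|\phi|^{2N}\)) to correct \(q_N\) to a genuine polynomial. This yields \(\tnorm{P_N-q_N}_{L^2}=\mathcal O(N^{-\varkappa-\beta})\).

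Pointwise bounds then follow on \(K\):
\begin{itemize}
\item On compacts of \(\mathscr D^e\), we apply Cauchy estimates or subharmonicity to \((P_N-q_N)/(\phi'\phi^N)\). This function is holomorphic there and \(\mathcal O(1)\) at infinity, so \(\varepsilon=0\).
\item Near \(\partial\mathscr D\), we can only use the sub-mean-value property on disks of radius \(\sim 1/N\). Converting the \(L^2\) bound to a sup bound costs \(N^{1/2}\) relative to the \(\sqrt{N+1}\) normalization, which gives \(\varepsilon=\tfrac12\).
\end{itemize}
Dividing by \(q_N\), which is bounded below on \(K\), gives \eqref{eq:into-asymp-exp1}.
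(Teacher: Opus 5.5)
\textbf{There is a genuine gap, and it is in Step~3, not Step~2.} The passage from your quasipolynomial to an actual polynomial fails as proposed, and this is exactly where finite regularity matters.

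With almost-holomorphic extensions, $q_N$ is not holomorphic in the boundary layer where its mass lives. For data that are exactly $C^{\varkappa+1,\alpha}$, one cannot improve $\abs{\bar\partial\tilde\phi}=\mathcal O(d^{\varkappa+\alpha})$, where $d=\dist(z,\partial\mathscr D)$; this is Dyn'kin's characterization of Hölder classes. Hence
\[
\bar\partial q_N=q_N\Bigl(N\,\frac{\bar\partial\tilde\phi}{\tilde\phi}+\frac{\bar\partial\partial\tilde\phi}{\partial\tilde\phi}+\bar\partial\tilde\nu\Bigr),\qquad \abs{\bar\partial q_N}\approx N d^{\varkappa+\alpha}\abs{q_N},
\]
and the $\tilde\nu_j$-terms contribute at the same order. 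Since $\abs{q_N}^2\approx Ne^{-2Nd}$ inside $\mathscr D$, $\norm{\bar\partial q_N}_{L^2}$ is in general of order $N^{1-\varkappa-\alpha}$.

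A Hörmander estimate with weight $\abs{\phi}^{2N}$ gives no gain against this. $\log\abs{\phi}$ is harmonic off $\partial\mathscr D$, so in the layer the only curvature is the $\mathcal O(1)$ from an auxiliary term such as $\log(1+\abs z^2)$. You therefore get $\norm{u}\lesssim\norm{\bar\partial q_N}\approx N^{1-\varkappa-\alpha}$, a full factor $N$ short of $N^{-\varkappa-\beta}$; for $\varkappa=0$ the bound even grows. In Hedenmalm--Wennman this loss is harmless, because $\bar\partial(\chi q_N)$ lives only on the cutoff region, where $q_N=\mathcal O(e^{-cN})$. To save your route you would need a specially designed weight with these properties:
\begin{itemize}
\item Laplacian of order $N^2$ on the $1/N$-layer where $\bar\partial q_N$ concentrates, with a profile adapted to its decay further in;
\item total mass $\mathcal O(N)$, so that degrees stay $\le N$;
\item boundedness on $\mathscr D$.
\end{itemize}
Nothing of this kind is in your plan.

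\textbf{What the paper does instead.} It avoids the issue by never leaving the holomorphic category. In place of almost-holomorphic extensions, it uses Fourier--Laurent truncations at degree $L_N=\lfloor N/\log N\rfloor$ of $\phi^{-1}$, of the $\nu_j$, and of the data $\log\omega\circ\phi_N^{-1}$ entering the recursion. These are exactly holomorphic on annuli of width $A\log N/N$ about $\mathbb T$. They keep uniformly bounded $C^{\varkappa,\beta}$-norms there (\cref{theo:Fourier-Master}) and approximate on $\mathbb T$ with error $\mathcal O(N^{-\varkappa-\beta})$. Consequently:
\begin{itemize}
\item $F_N$ is genuinely holomorphic on the whole layer.
\item The foliation flow gives quasi-orthogonality via the Cauchy theorem with no $\bar\partial$-error.
\item The polynomial part $B_N$ of $F_N$ is a Cauchy integral over $\phi_N^{-1}(\mathbb T_{r_N})$ at depth $\sigma\log N/N$, where $F_N=\mathcal O(N^{1/2-\sigma})$. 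It satisfies $F_N-B_N=\mathcal O(N^{-s})$ for every $s$ (\cref{lem:Faber-L2-approx}), so no $\bar\partial$-solution operator is needed.
\end{itemize}

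\textbf{A smaller point.} $\varepsilon=0$ away from $\partial\mathscr D$ does not follow from an $L^2(\mathscr D,\omega)$ bound via Cauchy estimates in $\mathscr D^e$ alone. You must reach $\partial\mathscr D$ losing only $\sqrt N$. The tool is Suetin's inequality $\int_{\partial\mathscr D}\abs{q}^2\abs{\d\zeta}\le c(\deg q+1)\norm{q}^2_{L^2(\mathscr D,\omega)}$, applied to the polynomial part of the error (the paper's $Q_N$), combined with the Cauchy formula for $(Q_N/\phi_N^N)^2$. Near $\partial\mathscr D$ only $\max_{\overline{\mathscr D}}\abs q\le c(\deg q+1)\norm q_{L^2(\mathscr D,\omega)}$ is available, and this is the source of $\varepsilon=\frac12$.
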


 In particular, for \(\varkappa=0\), this result improves Suetin's previous error estimate \eqref{eq:Suetin-asymp}. Moreover, if \(\varkappa+\alpha>1/2\), then the asymptotic expansion \eqref{eq:into-asymp-exp1} remains valid up to the boundary \(\partial\mathscr D\).

The asymptotic approximation \eqref{eq:into-asymp-exp1} of \(P_N\) is valid only in the complement of \(\mathscr D\). Because the boundary and the weight have only finite regularity, the functions \(\phi\) and \(\nu_j\) need not extend holomorphically across their respective boundaries. To overcome this difficulty, we replace them by suitable analytic approximations.

Set \(L_N\coloneqq\lfloor\frac{N}{\log N}\rfloor\) and denote by \(\mathcal F_N\) the \(N\)-th Fourier--Laurent truncation, that is
\[\mathcal F_N[f](z) = \sum_{\abs{k}\leq N}a_k z^k,\qquad f(z) = \sum_{k\in \mathbb Z}a_k z^k.\]
For large \(N\) and every fixed \(A>0\), the \(L_N\)-truncation \(\mathcal F_{L_N}[\phi^{-1}]\) of the inverse conformal mapping \(\phi^{-1}\) remains conformal on  \(\mathbb D_{r_N}^e\), the exterior of the disk of radius \(r_N = e^{-\frac{A\log N}{N}}\). Therefore we get the approximation 
\[\phi_N =(\mathcal F_{L_N}[\phi^{-1}])^{-1}\] of the exterior conformal map \(\phi\), see \cref{sec:conf-approx} for details.

\begin{maintheorem}\label{thm:Main2}
Under the assumptions of \cref{thm:Main1}, fix \(R>0\) and define the region
\[\Omega_{N,R}\coloneqq\left\{z\in\mathbb C\,\left|\,\dist(z,\mathscr D^c)\leq\mfrac{R}{\norm{\phi'}_{L^\infty(\partial\mathscr D)}} \mfrac{\log N }{N}\right.\right\}.\]
For all sufficiently large \(N\), the function \(\phi_N\) is well-defined on \(\Omega_{N,R}\), and
\[\nu_{N,j}\coloneqq\mathcal F_{L_N}[\nu_j],\qquad 0\leq j\leq\varkappa,\]
is well-defined on \(\phi_N(\Omega_{N,R})\). Further, for every \(\beta\in(0,\alpha)\), one has uniformly on \(\Omega_{N,R}\)
\[P_N(z)=\sqrt{N+1}\phi_N'(z)\phi_N^N(z)\exp\Bigl({\textstyle\sum_{j=0}^\varkappa N^{-j}\nu_{N,j}(\phi_N(z))}\Bigr)\Bigl(1+\mathcal O\bigl(N^{1/2+R-\varkappa-\beta}\bigr)\Bigr).\]
\end{maintheorem}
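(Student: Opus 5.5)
The plan is to deduce \cref{thm:Main2} from \cref{thm:Main1} and its construction. The idea is to replace the non-extendable objects \(\phi\) and \(\nu_j\) by their \(L_N\)-truncated analytic surrogates. We then run the same orthogonality/\(L^2\)-argument on the slightly enlarged region \(\Omega_{N,R}\), paying a factor \(N^R\) for the exponential growth of \(\phi_N^N\) inside \(\mathscr D\).

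\textbf{Step 1: geometry of the approximants.} Since \(\phi^{-1}\in C^{\varkappa+1,\alpha}\) on \(\mathbb T\), its Laurent coefficients decay like \(|k|^{-\varkappa-1-\alpha}\) up to logarithms. Hence \(\mathcal F_{L_N}[\phi^{-1}]-\phi^{-1}=\mathcal O(L_N^{-\varkappa-1-\beta})\) on \(\mathbb T\), with the analogous bound for derivatives. On \(\mathbb D^e_{r_N}\) the negative-index tail is multiplied by at most \(r_N^{-L_N}=e^{A L_N\log N/N}\le e^{A}\), which is bounded. A Rouché/Koebe-type argument therefore shows that \(\mathcal F_{L_N}[\phi^{-1}]\) is univalent on \(\mathbb D^e_{r_N}\). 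Choosing \(A\) larger than \(R\) times the distortion constant shows that its image contains \(\Omega_{N,R}\), so \(\phi_N\) is well defined there. The same estimate shows that \(\nu_{N,j}=\mathcal F_{L_N}[\nu_j]\) is holomorphic and bounded on \(\phi_N(\Omega_{N,R})\subset\mathbb D^e_{r_N}\), with \(\nu_{N,j}-\nu_j=\mathcal O(L_N^{-(\varkappa-j)-\beta})\) on \(\mathbb T\). Finally, \(|\phi_N^N|\le r_N^{-N}\lesssim N^{R}\) on \(\Omega_{N,R}\).

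\textbf{Step 2: the ansatz is almost orthogonal.} Define the holomorphic function
\[Q_N(z)=\sqrt{N+1}\,\phi_N'\phi_N^N\exp\Bigl(\textstyle\sum_j N^{-j}\nu_{N,j}(\phi_N)\Bigr)\]
on a neighborhood of \(\Omega_{N,R}\). Using the boundary value problems defining \(\nu_j\) (the weight-compensation relations from \cref{theo:Expansion1}), we show that \(\langle z^k, Q_N\rangle_{\mathscr D,\omega}\) is small for \(k<N\) and that \(\|Q_N\|\approx 1\). To do this, write the inner product via Green's formula as a boundary integral plus a bulk term concentrated in a \((\log N)/N\)-layer; inside that layer \(|\phi_N^N|\) decays like \(N^{-A'}\). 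The truncation errors from Step 1 contribute \(\mathcal O(N^{-\varkappa-\beta})\) after summing the \(N^{-j}\)-weighted terms, uniformly since \(L_N\sim N/\log N\).

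\textbf{Step 3: projection and pointwise bound.} Let \(\Pi_N\) be the polynomial part of \(Q_N\): the truncation of its Faber-type expansion to degree \(N\), which is a polynomial because \(Q_N\) grows like \(z^N\) at infinity. We show \(\|P_N-\Pi_N\|_{L^2(\mathscr D,\omega)}=\mathcal O(N^{-\varkappa-\beta})\) by the standard extremal argument: \(P_N/\kappa_N\) minimizes the norm among monic polynomials. The difference \(P_N-\Pi_N\) is a polynomial of degree at most \(N\), and the Bergman-space pointwise estimate gives
\[|p(z)|\lesssim \sqrt{N}\,|\phi_N(z)|^N\,\|p\|\]
on \(\Omega_{N,R}\) and outside \(\mathscr D\). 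The \(\sqrt N\) here accounts for the \(N^{1/2}\) loss at and inside the boundary. Dividing by \(|Q_N|\gtrsim \sqrt N|\phi_N|^N\) near \(\partial\mathscr D\) would naively give \(N^{1/2-\varkappa-\beta}\). The extra \(N^R\) arises because inside \(\mathscr D\) the Bernstein–Walsh bound for \(P_N-\Pi_N\) is measured against \(r_N^{-N}\), while \(|Q_N|\) may be as small as \(\sqrt N\,r_N^{N}\). Combining these yields the claimed \(\mathcal O(N^{1/2+R-\varkappa-\beta})\).

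\textbf{Main obstacle.} I expect Step 2 to be the hardest: controlling the bulk contribution of the almost-orthogonality inside \(\mathscr D\), where \(Q_N\) is only an analytic approximation. The \(\bar\partial\)-error of the exponential correction must be shown to be \(\mathcal O(N^{-\varkappa-\beta})\) in \(L^2\) even though the \(\nu_{N,j}\) only approximate \(C^{\varkappa-j,\alpha}\) data. I would first try a cutoff at depth \(\frac{A\log N}{N}\): beyond it, use the crude bound \(|\phi_N^N|\le N^{-A}\); within it, use the Fourier decay estimates from Step 1.
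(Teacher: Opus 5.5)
Your Step~2 carries the entire content of the theorem, and the mechanism you propose for it does not work. Across the layer of depth $A\log N/N$ the factor $\abs{\phi_N}^N$ ranges over $[N^{-A},1]$. The $L^2$-mass of the ansatz (the paper's $F_N$, your $Q_N$) lives in a strip of width $\asymp 1/N$. So the ``bulk term'' over that layer is essentially the whole inner product $\scalar{z^k}{F_N}_{\mathscr D,\omega}$, not an error; only the region beyond the layer is negligible.

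Nor is there a boundary value problem for the $\nu_j$ to fall back on. \cref{thm:Main1} only asserts their existence, and only $\nu_0$ has a boundary characterization ($2\Re\nu_0=-\log\omega\circ\phi^{-1}$ on $\mathbb T$). With $\nu_0$ and the trivial leaves $e^{-t}\mathbb T$, the flow densities are constant on leaves only up to $\mathcal O(\log N/N)$, because $\log\omega\circ\phi_N^{-1}$ varies across the layer. That is insufficient once $\varkappa\ge1$.

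The missing idea is the foliation flow. Disintegrate $\int q\,\overline{F_N}\,\omega\,\d\A$ over leaves $\psi_{N,t}(\mathbb T)$, where $\psi_{N,t}(\zeta)=\zeta\exp\bigl(-t+\sum_{j\ge1}N^{-j}\eta_{N,t,j}(\zeta)\bigr)$ and $\abs{t}\le A\log N/N$. Then determine the corrections $\eta_{N,t,j}$ together with $\nu_{N,j}=\mathcal F_{L_N}[\nu_j]$ recursively, so that $\Upsilon_{N,t}=\pi^{-1}(N+1)e^{-2(N+1)t}\bigl(1+\mathcal O(N^{-\varkappa-\beta})\bigr)$ uniformly on $\mathbb T$. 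Concretely, each new term is a Herglotz transform of a polynomial expression in the Taylor data of $\log\omega\circ\phi_N^{-1}$ and in the previously constructed $\eta_{N,t,k}$, $\nu_{N,k}$. These are kept in the admissible classes $\sL^{\varkappa-j,\alpha}$ so that the estimates survive on the layer.

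The constant part then integrates to zero on every leaf by Cauchy's theorem, since $\Lambda_N^{-1}[q]/f_N$ vanishes at $\infty$ for $\deg q<N$. The remainder is $\mathcal O\bigl(N^{-\varkappa-\beta}\int\abs{qF_N}\omega\,\d\A\bigr)$. This recursion is what actually defines $\nu_j$ for $j\ge1$. Without it (or an equivalent Riemann--Hilbert hierarchy), the bound $\norm{P_N-\Pi_N}_{L^2(\mathscr D,\omega)}=\mathcal O(N^{-\varkappa-\beta})$ used in Step~3 is unsupported. You also cannot shortcut it by quoting the boundary case $\varepsilon=\tfrac12$ of \cref{thm:Main1}: in the paper that case is obtained from the present statement.

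Step~3 is repairable, but the inequality $\abs{p(z)}\lesssim\sqrt N\abs{\phi_N(z)}^N\norm{p}$ for $\deg p\le N$ is false as stated. Inside $\mathscr D$ it already fails for constants when $\abs{\phi_N(z)}^N\approx N^{-R}$ with $R>\tfrac12$. On $\partial\mathscr D$ the sharp order is $N\norm{p}$, and that is what produces the $N^{1/2}$ loss after dividing by $\abs{F_N}\asymp\sqrt N$; with your $\sqrt N$ there would be no loss.

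The correct version uses $\max_{\overline{\mathscr D}}\abs{p}\le c(N+1)\norm{p}_{L^2(\mathscr D,\omega)}$. By \cref{lem:min-curve}, $\abs{\phi_N}\ge e^{-(1+\varepsilon)R\log N/N}$ on $\Omega_{N,R}$, so $\abs{F_N}\gtrsim\sqrt N\,N^{-(1+\varepsilon)R}$ there. The extra factor $N^{\varepsilon R}$ is absorbed by working with some $\gamma\in(\beta,\alpha)$ in place of $\beta$.

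Likewise, the region beyond the layer has to be controlled for the polynomial $\Pi_N$, not for the ansatz, which is not defined there. One way is via the Cauchy integral of $\Pi_N$ over $\abs{\phi_N}=e^{-A\log N/(3N)}$.
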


In addition to the asymptotic behavior we also obtain information about the location of the zeros of the orthogonal polynomials. If \(z_N\) is a zero of \(P_N\), then
\[\dist(z_N,\overline{\mathscr D})\longrightarrow 0, \qquad	N\to\infty.\]
In particular, if \(\varkappa+\alpha>1/2\), then all zeros of \(P_N\) lie in \(\overline{\mathscr D}\) for sufficiently large \(N\).  The following corollary gives a more precise description of this behavior.

\begin{maincorollary}\label{coro:main}
Under the assumptions of \cref{thm:Main1}, the following statements hold. If \(\varkappa+\alpha\leq\frac12\), then, for every \(\gamma\in(0,\alpha)\),
\[\max_{P_N(z)=0}\dist(z,\overline{\mathscr D})=\mathcal O(N^{-2\gamma}).\]
If \(\varkappa+\alpha>\frac12\), set \(d_N=\min\{\dist(z,\mathscr D^c)\,|\,P_N(z)=0\}\), then
\[\liminf_{N\to \infty}\frac{d_N\,N}{\log N} \geq \frac{\varkappa+\alpha-1/2}{\norm{\phi'}_{L^\infty(\partial \mathscr D)}}.\]
\end{maincorollary}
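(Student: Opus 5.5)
We derive \cref{coro:main} directly from \cref{thm:Main1,thm:Main2}. The idea is that on any region where an asymptotic formula holds with a remainder \(o(1)\), the right-hand side is a product of nonvanishing factors. Indeed, \(\phi'\neq0\) and \(\abs{\phi}\geq1\) on \(\mathscr D^c\), the exponential of a bounded function never vanishes, and \(1+o(1)\neq0\) for large \(N\). Hence \(P_N\) has no zeros there. The two cases of the corollary come from choosing the largest such region in each regime.

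\textbf{Case \(\varkappa+\alpha>\frac12\).} We apply \cref{thm:Main2} with any \(R<\varkappa+\alpha-\frac12\). Choosing \(\beta<\alpha\) close enough to \(\alpha\) that \(\frac12+R-\varkappa-\beta<0\), the remainder tends to zero uniformly on \(\Omega_{N,R}\). We must also verify that the main factor does not vanish on \(\Omega_{N,R}\):
\begin{itemize}
\item \(\phi_N\) is conformal there, being the inverse of a conformal map on \(\mathbb D^e_{r_N}\), so \(\phi_N'\neq0\) and \(\phi_N\neq0\);
\item the truncations \(\nu_{N,j}\) are finite on \(\phi_N(\Omega_{N,R})\), which the theorem asserts.
\end{itemize}
Thus \(P_N\) has no zeros in \(\Omega_{N,R}\), and every zero \(z\) satisfies
\[
\dist(z,\mathscr D^c)>\frac{R}{\norm{\phi'}_{L^\infty(\partial\mathscr D)}}\,\frac{\log N}{N}.
\]
This gives \(\liminf_{N\to\infty} d_N N/\log N\geq R/\norm{\phi'}_{L^\infty(\partial\mathscr D)}\). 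Letting \(R\uparrow\varkappa+\alpha-\frac12\) yields the stated bound. In particular, all zeros lie in \(\mathscr D\subset\overline{\mathscr D}\).

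\textbf{Case \(\varkappa+\alpha\le\frac12\), so \(\varkappa=0\).} Here \cref{thm:Main1} applied on the closed set \(K=\{\dist(z,\overline{\mathscr D})\ge\delta\}\) gives, for fixed \(\delta>0\), an error of order \(N^{-\beta}\). This only shows that zeros accumulate on \(\overline{\mathscr D}\), not the quantitative rate. For the rate we would use \cref{thm:Main1} with \(K=\mathscr D^c\) and \(\varepsilon=\frac12\); the error \(N^{1/2-\beta}\) there need not be small. Instead we must use a bound that depends on the distance, since far from the boundary \(\abs{\phi}^N\) dominates.

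The planned substitute is an argument-principle/Rouché estimate. For \(z\) with \(\dist(z,\overline{\mathscr D})=t\), one has \(\abs{\phi(z)}\ge1+ct\), so \(\abs{\phi(z)}^{N}\ge e^{cNt}\). The model term \(\sqrt{N+1}\,\phi'\phi^Ne^{\nu_0}\) therefore dominates the polynomial error once the latter is controlled. Concretely, the difference \(P_N-\sqrt{N+1}\,\phi'\phi^Ne^{\nu_0(\phi)}\) is holomorphic in \(\mathscr D^e\) and vanishes at infinity after division by \(\phi^N\). We aim to bound it in \(L^2\) near the boundary by \(N^{1/2-\gamma'}\), and pointwise at distance \(t\) by \(N^{1/2-\gamma'}t^{-1/2}\) using subharmonicity.

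Comparing with the model term, whose size is \(\asymp\sqrt N\,\abs{\phi}^N\ge\sqrt N\), the ratio is at most \(N^{-\gamma'}t^{-1/2}\). This is \(<1\) once \(t\gg N^{-2\gamma'}\), which produces the \(\mathcal O(N^{-2\gamma})\) rate. Extracting this distance-dependent pointwise estimate from the \(L^2\) remainder bounds behind \cref{thm:Main1} (stated in \cref{theo:Expansion1}) is the main obstacle. I expect to apply the sub-mean-value property on disks of radius \(\asymp t\) to the weighted \(L^2\) error estimate on the annulus near \(\partial\mathscr D\).
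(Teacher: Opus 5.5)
Your argument for \(\varkappa+\alpha>\frac12\) is correct and coincides with the paper's. Your condition \(R<\varkappa+\alpha-\frac12\), with \(\beta\) close to \(\alpha\), is the right one; the paper's proof writes \(R<1/2-\varkappa-\beta\), which is a sign slip. The gap is in the case \(\varkappa+\alpha\le\frac12\). You correctly identify the needed bound, namely a relative error of order \(N^{-\gamma'}t^{-1/2}\) at distance \(t\) from \(\mathscr D\). However, you do not prove it, and it is the whole content of that case. In the paper this bound is exactly \cref{theo:Expansion1}: for \(\beta\in(0,\alpha)\) and \(\gamma\in[0,\varkappa+\beta)\), the expansion holds uniformly on \(\{\dist(z,\mathscr D)\ge N^{-2\gamma}\}\) with relative error \(\mathcal O(N^{\gamma-\varkappa-\beta})\). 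Taking \(\varkappa=0\) and \(\beta\in(\gamma,\alpha)\), the error is \(o(1)\) and the main term has no zeros, so the first claim follows immediately. That is the paper's entire argument for this part.

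If you want to derive the bound yourself, the step you sketch does not supply it. Applying the sub-mean-value property on disks of radius \(\asymp t\) to an area \(L^2\) bound gives a factor \(t^{-1}\) rather than \(t^{-1/2}\), and hence at best the rate \(N^{-\gamma}\). The only way around this is to already control \(L^2\) norms along curves near \(\partial\mathscr D\), and no a priori \(L^2\) bound in the exterior is available at all. The missing ingredient is the trace inequality \eqref{eq:lem poly est2}, \(\int_{\partial\mathscr D}\tabs{q}^2\tabs{\d\zeta}\le c(\deg q+1)\tnorm{q}^2_{L^2(\mathscr D,\omega)}\), which is where the factor \(\sqrt N\) comes from. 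It applies only to polynomials. So you cannot work with the full difference \(P_N-\sqrt{N+1}\,\phi'\phi^Ne^{\nu_0(\phi)}\), which is not a polynomial and, after division by \(\phi^N\), does not vanish exactly at infinity. Instead you must isolate \(Q_N=\mathcal S_NB_N\) in \(P_N=c_N^{-1}(B_N-Q_N)\). Here \(\deg Q_N\le N-1\), and \(\tnorm{Q_N}_{L^2(\mathscr D,\omega)}=\mathcal O(N^{-\varkappa-\beta})\) by \cref{prop: quasi ortho Faber}. All other contributions are uniformly small on \(\mathscr D^c\); see \eqref{eq:decomp P_N} and \eqref{eq:prf-Main1-F_N-on-Dc}. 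Since \(Q_N/\phi_N^N\) is holomorphic in \(\mathscr D^e\) and vanishes at infinity, Cauchy's formula for \((Q_N/\phi_N^N)^2\) over \(\partial\mathscr D\) yields \(\tabs{Q_N/\phi_N^N}^2\lesssim N\tnorm{Q_N}^2_{L^2(\mathscr D,\omega)}/\dist(z,\mathscr D)\), which is your bound. Finally, the heuristic \(\tabs{\phi}^N\ge e^{cNt}\) plays no role. \(Q_N\) grows at the same exponential rate, so the entire gain comes from the kernel factor \(\dist(z,\mathscr D)^{-1/2}\).
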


\subsection{Related Work}
Closely related to our setting is the asymptotic analysis of planar orthogonal polynomials with exponentially varying weights \cite{Hedenmalm2024softRHP,Hedenmalm2020spectral}, which arise in random matrix theory and two-dimensional Coulomb gases \cite{hedenmalm2017planar,Hedenmalm2013Coulomb,Ameur2011fluctuations,Zabrodin2006Dyson}. At inverse temperature \(2\), these gases form determinantal point processes whose correlation kernels can be expressed in terms of orthogonal polynomials.

For the fixed-weight ensemble associated with \(\omega\), the particles are confined to \(\mathscr D\), producing a hard edge at its boundary. The correlation kernel is
\[
K_N(z,w)=\sqrt{\omega(z)\omega(w)}\sum_{j=0}^{N-1}P_j(z)\overline{P_j(w)},\qquad z,w\in\mathscr D,
\]
where \(P_j\) are the orthogonal polynomials in \(L^2(\mathscr D,\omega)\). Its diagonal \(K_N(z,z)\) gives the particle intensity with respect to \(\d A\).

Boundary scaling limits describe the particle distribution at microscopic distances from \(\partial\mathscr D\). Under mild assumptions, Lubinsky established a universal limit for the rescaled kernel \cite{Lubinsky2010universality}. After suitable rescaling and normalization, this limit is independent of the weight and the domain. Under suitable regularity assumptions, the asymptotic expansion in \cref{thm:Main2} yields a quantitative version of this result, valid on a growing microscopic scaling window. The same limiting kernel appears in recent work on hard-edge universality for varying weights \cite{cronvall2025direct}.

\subsection{Proof Idea and Outline}\label{sec:outline}
We adapt the foliation flow approach of Hedenmalm--Wennman \cite{hedenmalm2017planar,hedenmalm2024riemann} to the present setting. At its core, this approach reduces orthogonality with respect to an area measure to orthogonality along a family of closed curves. Let \(\varphi\colon\mathbb D\to\mathscr D\) be a diffeomorphism and set \(\Gamma_t=\varphi(\mathbb T_t)\), where \(\mathbb T_t\) is the circle of radius \(t\). Then the curves \((\Gamma_t)_t\) foliate the domain \(\mathscr D\) and for suitably regular function \(F\), we get
\begin{equation}\label{eq:Idea1}
\int_{\mathscr D}F(z)\,\omega(z)\,\d\A(z)
=
\mfrac{1}{\pi}\int_0^1\int_{\Gamma_t}
F(\zeta)\,\omega(\zeta)v_\varphi(\zeta)
\abs{\d\zeta}\d t,
\end{equation}
where \(v_\varphi(\zeta)\) is the outward normal velocity of the curve passing through \(\zeta\).
Suppose now that \(F\) is a polynomial of degree \(N\) satisfying
\begin{equation}\label{eq:Idea2}
\int_{\Gamma_t}Q(\zeta)\overline{F(\zeta)}
\,\omega(\zeta)v_\varphi(\zeta)\abs{\d\zeta}=0,
\end{equation}
for every polynomial \(Q\) of degree less than \(N\) and every \(0<t<1\). Applying \eqref{eq:Idea1} to \(Q\overline F\), we see that \(F\) agrees with \(P_N\), up to normalization.

Ideally, we would find a polynomial \(F\) and a foliation satisfying \eqref{eq:Idea2} exactly. Since no general construction is available, we instead seek an approximate solution near the boundary. This is motivated by the fact that the orthogonal polynomials concentrate their \(L^2\)-mass in a shrinking one-sided neighborhood of the boundary as \(N\) grows.
For each \(N\), we construct a holomorphic function \(F_N\) and a foliation \((\Gamma_{N,t})_t\) of a shrinking boundary neighborhood that approximately satisfy \eqref{eq:Idea2}.
These holomorphic approximations \(F_N\) are given in terms of the exterior conformal mapping \(\phi\) and certain Szeg\H{o} functions.
Afterwards, we obtain the asymptotic estimates for \(P_N\) from the approximate orthogonality of \(F_N\).

The main difficulty in adapting these methods to finite regularity is the restriction on the domains of definition. In the real-analytic setting, the exterior conformal map and the Szeg\H{o} functions involved in \(F_N\) extend holomorphically across the boundary, whereas such extensions are generally unavailable under our assumptions. Consequently, the corresponding constructions cannot be applied directly and require suitable modifications.  We overcome this difficulty by replacing these functions with their Fourier--Laurent truncations.

We develop the approximation framework in \cref{sec:analytic-app-frame}. In particular, \cref{sec:Fourier,sec:conf-approx} establish Fourier approximation estimates on shrinking annuli and apply them to construct approximations of the exterior conformal map that extend across \(\partial\mathscr D\). We then introduce \textit{admissible Laurent families} in \cref{sec:AL}. These families satisfy uniform estimates that are needed to adapt the foliation flow method of Hedenmalm--Wennman to the finite regularity setting and play a central role throughout the paper.

The foliation flow construction is carried out in \cref{sec:foliation-technique}. We first introduce the holomorphic approximation \(F_N\) of \(P_N\) and the foliation in \cref{sec:Foliation and Model}. Both are expressed in terms of initially unspecified admissible Laurent families, which are determined in \cref{sec:asymp-flow-decomp} by a recursive algorithm derived from the foliation flow.

We turn to the orthogonal polynomials in \cref{sec:strong-asymptotics}. We first show in \cref{sec:quasi-ortho} that \(F_N\) is approximately normalized and quasi-orthogonal on a shrinking boundary layer. In \cref{sec:faber}, we approximate \(F_N\) by its associated generalized Faber polynomial \(B_N\). These polynomials inherit the approximate normalization and quasi-orthogonality of \(F_N\) and extend both properties to the whole domain \(\mathscr D\). Finally in \cref{sec:asymp expansion}, we compare the polynomials \(B_N\) with \(P_N\) and derive the pointwise asymptotic expansions stated in \cref{thm:Main1,thm:Main2}.

\subsection{Notation and Conventions}\label{sec:notation}
We denote by \(\mathbb D\) the open unit disk and by \(\mathbb T\coloneqq\partial\mathbb D\) the unit circle. More generally, for any positive number \(r>0\) we write \(\mathbb D_r\) and \(\mathbb T_r\) for the disk and circle of radius \(r\), respectively. Furthermore, for a given domain \(D\subset \mathbb C\) we denote by \(D^e\coloneqq \mathbb C\setminus \overline D\) and \(D^c\coloneqq \mathbb C\setminus D\) its exterior and complement, respectively. 

We write \(\d\A\) for the area measure normalized by \(1/\pi\). The space of complex polynomials of degree at most \(N\) is denoted by \(\mathrm{Pol}_N\). Further, we use \(\mathcal O\) and \(\mathrm{o}\) in their usual Landau senses, and write \(f\asymp g\) when \(f=\mathcal O(g)\) and \(g=\mathcal O(f)\).

Unless stated otherwise, all estimates involving \(N\) are understood to hold for sufficiently large \(N\). The constants implicit in the notation \(\mathcal O(\cdot)\) may depend on \(\mathscr D\), \(\omega\), \(\varkappa\), \(\alpha\), and any fixed auxiliary parameters under consideration, but are independent of \(N\).

\section{Analytic Approximation Framework}\label{sec:analytic-app-frame}

\subsection{Hölder Spaces}\label{sec:hoelder}
We denote by \(C^\varkappa(\mathbb T)\) the space of functions \(f\colon \mathbb T\to \mathbb C\) which are \(\varkappa\)-times continuously differentiable with respect to the angular variable. Moreover, \(C^{\varkappa,\alpha}(\mathbb T)\) denotes the space of all \(f\in C^\varkappa(\mathbb T)\) such that the norm
\[\norm{f}_{C^{\varkappa,\alpha}(\mathbb T)}\coloneqq\sum_{s=0}^\varkappa \norm{D_\tau^s f}_{L^\infty(\mathbb T)}+\hnorm{D_\tau^\varkappa f}_{\mathbb T,\alpha}\]
is finite, where
\[\hnorm{D_\tau^\varkappa f}_{\mathbb T,\alpha}\coloneqq\sup_{\substack{\zeta,\eta\in \mathbb T\\\zeta \neq \eta}}\frac{\abs{D_\tau^\varkappa f(\zeta)-D_\tau^\varkappa f(\eta)}}{\abs{\zeta-\eta}^\alpha},\]
and, for \(\zeta=e^{i\theta}\),
\[D_\tau f(\zeta)\coloneqq
\frac{d}{d\theta} f(e^{i\theta}).\]
With this norm, \(C^{\varkappa,\alpha}(\mathbb T)\) is a Banach space. Since the Euclidean and angular distances are equivalent on \(\mathbb T\), the resulting Hölder norm is equivalent to the standard Hölder norms used in the literature.

We use analogous notation for functions defined on an annulus \(\mathcal A\). For \(z=\rho e^{i\theta}\in \mathcal A\), we define
\[D_\tau f(z)\coloneqq\frac{d}{d\theta}f(\rho e^{i\theta}),\qquad D_\rho f(z)\coloneqq\frac{d}{d\rho}f(\rho e^{i\theta}).\]
For \(\varkappa\in \mathbb N\) and \(\alpha\in(0,1)\), we define \(C^{\varkappa,\alpha}(\mathcal A)\) as the space of all functions \(f\colon \mathcal A\to\mathbb C\) for which the derivatives \(D_\rho^mD_\tau^n f\) with \(m+n\leq \varkappa\)
exist, and for which the norm
\[\norm{f}_{C^{\varkappa,\alpha}(\mathcal A)} \coloneqq \sum_{m+n\leq \varkappa} \norm{D_\rho^mD_\tau^n f}_{L^\infty(\mathcal A)} + \sum_{m+n=\varkappa} \hnorm{D_\rho^mD_\tau^n f}_{\cA,\alpha}\]
is finite. Here
\[\hnorm{D_\rho^mD_\tau^n f}_{\cA,\alpha}\coloneqq\sup_{\substack{z,w\in \mathcal A\\ z\neq w}}\frac{\abs{D_\rho^mD_\tau^n f(z)-D_\rho^mD_\tau^n f(w)}}{\abs{z-w}^\alpha}.\]

If \(f\in C^1(\cA)\), then the polar derivatives introduced above are related to the Wirtinger derivatives
\[\partial \coloneqq \mfrac{1}{2}(\partial_x-i\partial _y),\qquad\bar \partial \coloneqq \mfrac{1}{2}(\partial_x+i \partial_y),\]
through the relations
\begin{equation}\label{eq:polar-to-wirtinger}
D_\tau f(z)=iz\,\partial f(z)-i\overline z\,\bar\partial f(z),\qquad D_\rho f(z)=\frac{z}{\abs z}\,\partial f(z)+\frac{\overline z}{\abs z}\,\bar\partial f(z).
\end{equation}
Standard product estimates for Hölder spaces show that the norm defined above is equivalent to the corresponding Hölder norm obtained from Wirtinger derivatives,
\[\sum_{m+n\leq \varkappa}\norm{\partial^m\bar\partial^n f}_{L^\infty(\mathcal A)}+\sum_{m+n=\varkappa}\hnorm{\partial^m\bar\partial^n f}_{\cA,\alpha}.\]
This equivalence is uniform for annuli contained in a fixed annular neighborhood of \(\mathbb T\).
Unlike the polar formulation, the Wirtinger formulation extends naturally without change to arbitrary domains \(U\subset\mathbb C\). In this case we can also write \(\tnorm{f}_{C^{\varkappa,\alpha}(U)}\).

The spaces \(C^{\varkappa,\alpha}(\mathbb T)\) and \(C^{\varkappa,\alpha}(\mathcal A)\) are Banach spaces, and the equivalent norm formulations described above may be used interchangeably.  Finally, these Hölder spaces are Banach algebras under pointwise multiplication. More precisely, if \(X\) denotes either \(\mathbb T\) or \(\mathcal A\), then there exists a constant \(C_{\varkappa,\alpha}>0\) such that, for any \(f,g\in C^{\varkappa,\alpha}(X)\), one has \(fg\in C^{\varkappa,\alpha}(X)\) and \(\tnorm{fg}_{C^{\varkappa,\alpha}(X)}\leq C_{\varkappa,\alpha}\tnorm{f}_{C^{\varkappa,\alpha}(X)}\tnorm{g}_{C^{\varkappa,\alpha}(X)}\).

\subsection{Fourier Approximation on Shrinking Annuli}\label{sec:Fourier}
Given a function \(f\in C^{\varkappa,\alpha}(\mathbb T)\), we define its Fourier coefficients \(c_k(f)\) by
\[c_k(f) \coloneqq \mfrac{1}{2\pi}\int_\mathbb T f(\zeta) \zeta^{-k} \abs{\d \zeta}.\]
Further, we define the \(N\)-th Fourier truncation operator \(\mathcal F_N\) by
\[\mathcal F_N[f](\zeta) \coloneqq \sum_{\abs{k}\leq N} c_k(f) \zeta^k.\]
Although \(\mathcal F_N[f]\) is initially defined on the unit circle, the same formula defines a Laurent polynomial, and hence a holomorphic function on \(\mathbb C\setminus\{0\}\). If only Fourier coefficients corresponding to non-negative or only non-positive indices occur, then this Laurent polynomial extends holomorphically to \(0\) or \(\infty\), respectively. In the same way, it is reasonable to consider the derivative of \(\mathcal F_N[f]\) as
\[\mathcal F_N[f]'(z) = \sum_{\abs{k}\leq N}kc_k(f) z^{k-1},\quad z\in \mathbb C\setminus\{0\},\]
when \(\mathcal F_N[f]\) is regarded as a holomorphic function. Note that, in order to view \(\mathcal F_N[f]\) as a holomorphic function it is important to interpret the negatively indexed part of \(\mathcal F_N[f](z)\) as negative powers of \(z\) (i.e. \(z^{-k}\) for \(k>0\)) and not as powers of the conjugate (i.e. \(\bar z^k\)). Although both definitions coincide on the circle, the latter will not describe a holomorphic function.

The following result collects the Fourier approximation estimates used throughout the paper. It provides control of  the uniform approximation error of the Fourier truncations \(\mathcal F_N[f]\) and their Hölder norms. Although these estimates are classical, the precise formulation required here is not commonly stated as a single result, so we include a proof for completeness in \cref{sec:App-add-proofs}.
\begin{theorem}\label{theo:Bernstein0}
Let \(f\in C^{\varkappa,\alpha}(\mathbb T)\) for \(\varkappa\in\mathbb Z_{\geq 0}\) and \(\alpha\in (0,1)\). Then there exists  a constant \(c_\alpha>0\) independent of \(f\) and \(N\) such that 
\begin{equation}\label{eq:theo-Bernstein-uni1}
\norm{f-\mathcal F_N[f]}_{L^\infty(\mathbb T)}\leq c_\alpha \norm{f}_{C^{\varkappa,\alpha}(\mathbb T)}\mfrac{\log N}{N^{\varkappa+\alpha}} ,\qquad N\geq 2.
\end{equation}
Further, for every \(\beta\in(0,\alpha)\) there exists a constant \(c_{\alpha,\beta}>0\), independent of \(f\) and \(N\), such that
\begin{equation}\label{eq:theo-Bernstein-uni2}
\norm{\mathcal F_N[f]}_{C^{\varkappa,\beta}(\mathbb T)}\leq c_{\alpha,\beta} \norm{f}_{C^{\varkappa,\alpha}(\mathbb T)}.
\end{equation}
\end{theorem}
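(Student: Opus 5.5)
The first estimate \eqref{eq:theo-Bernstein-uni1} will follow from Lebesgue's inequality combined with Jackson's theorem. Let \(T_N\) be a trigonometric polynomial of degree at most \(N\) that best approximates \(f\) uniformly on \(\mathbb T\). Since \(\mathcal F_N\) reproduces trigonometric polynomials of degree at most \(N\), we have
\[
f-\mathcal F_N[f]=(f-T_N)-\mathcal F_N[f-T_N],
\]
and therefore
\[
\norm{f-\mathcal F_N[f]}_{L^\infty(\mathbb T)}\leq (1+L_N)\,\norm{f-T_N}_{L^\infty(\mathbb T)}.
\]
Here \(L_N=\frac{1}{2\pi}\int_{-\pi}^{\pi}\abs{D_N(\theta)}\,d\theta\leq C\log N\) is the Lebesgue constant, that is, the \(L^1\)-norm of the Dirichlet kernel \(D_N\). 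Jackson's theorem, iterated \(\varkappa\) times through derivatives, gives
\[
E_N(f)\leq C_\varkappa N^{-\varkappa}E_N(D_\tau^\varkappa f)\leq C N^{-\varkappa-\alpha}\hnorm{D_\tau^\varkappa f}_{\mathbb T,\alpha}.
\]
Combining the two bounds yields \eqref{eq:theo-Bernstein-uni1}. The constant depends on \(\varkappa\) as well, which the paper's convention allows.

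For the second estimate \eqref{eq:theo-Bernstein-uni2}, I would first use that \(\mathcal F_N\) commutes with \(D_\tau\). Hence \(D_\tau^s\mathcal F_N[f]=\mathcal F_N[D_\tau^s f]\), and it suffices to show that for \(g\in C^{0,\alpha}(\mathbb T)\)
\[
\norm{\mathcal F_N[g]}_{L^\infty(\mathbb T)}+\hnorm{\mathcal F_N[g]}_{\mathbb T,\beta}\leq c_{\alpha,\beta}\norm{g}_{C^{0,\alpha}(\mathbb T)}.
\]
The sup norm is easy: apply \eqref{eq:theo-Bernstein-uni1} with \(\varkappa=0\) to get \(\norm{\mathcal F_N[g]}_\infty\leq\norm{g}_\infty+c\norm{g}_{C^{0,\alpha}}\). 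This is uniform in \(N\) because \(\log N/N^\alpha\) is bounded.

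For the Hölder seminorm, fix \(\zeta,\eta\) with \(\delta=\abs{\zeta-\eta}\) and set \(h=\mathcal F_N[g]-g\). I would split into two regimes.

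\emph{Case \(\delta\geq 1/N\).} Write
\[
\abs{\mathcal F_N[g](\zeta)-\mathcal F_N[g](\eta)}\leq 2\norm{h}_\infty+\abs{g(\zeta)-g(\eta)}\leq C\norm{g}_{C^{0,\alpha}}\bigl(N^{-\alpha}\log N+\delta^\alpha\bigr).
\]
Since \(\delta\geq 1/N\) and \(\beta<\alpha\), we have \(N^{-\alpha}\log N\leq C_{\alpha,\beta}N^{-\beta}\leq C_{\alpha,\beta}\delta^\beta\). This is the one point where losing \(\beta<\alpha\) is used, to absorb the logarithm.

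\emph{Case \(\delta<1/N\).} Use the mean value theorem together with Bernstein's inequality for the derivative. To avoid the logarithm, write \(\mathcal F_N[g]=\sigma_N+(\mathcal F_N[g]-\sigma_N)\), or more simply decompose \(\mathcal F_N[g]=\sum_j(\mathcal F_{2^{j}}-\mathcal F_{2^{j-1}})[g]\) dyadically. Each dyadic block has sup norm \(\lesssim 2^{-j\alpha}j\,\norm{g}_{C^{0,\alpha}}\) by \eqref{eq:theo-Bernstein-uni1}, and by Bernstein its derivative is bounded by \(2^{j}\) times that. Summing over \(2^j\leq N\) gives
\[
\norm{\mathcal F_N[g]'}_\infty\lesssim N^{1-\beta}\norm{g}_{C^{0,\alpha}}.
\]
Hence
\[
\abs{\mathcal F_N[g](\zeta)-\mathcal F_N[g](\eta)}\lesssim N^{1-\beta}\delta\leq\delta^\beta.
\]
Alternatively, the direct bound \(\norm{\mathcal F_N[g]'}_\infty\leq N\norm{\mathcal F_N[g]-g(\zeta_0)}\)-type arguments give \(N^{1-\alpha}\log N\), which is again dominated by \(C N^{1-\beta}\).

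The main obstacle is handling the logarithmic Lebesgue constant so that the constants stay uniform in \(N\). This is exactly why the statement only claims \(\beta<\alpha\), and the dyadic or case split above resolves it. The remaining steps, namely Jackson, Bernstein, and the bound on the Lebesgue constant, are classical results that I would cite.
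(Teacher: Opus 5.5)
Your argument is correct. For \eqref{eq:theo-Bernstein-uni1} it is the standard proof (Lebesgue constant plus iterated Jackson) behind the paper's citation of Zygmund, so nothing differs there.

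For \eqref{eq:theo-Bernstein-uni2} the paper takes a more direct route. It writes \(\mathcal F_N[f](\zeta)-\mathcal F_N[f](\eta)\) as \(f(\zeta)-f(\eta)\) plus the integral of the second difference \(f(\zeta\overline w)-f(\zeta)-f(\eta\overline w)+f(\eta)\) against the Dirichlet kernel. It bounds that difference by \(2\hnorm{f}_{\mathbb T,\alpha}\min\{\abs{1-w}^\alpha,\abs{\zeta-\eta}^\alpha\}\le 2\hnorm{f}_{\mathbb T,\alpha}\abs{1-w}^{\alpha-\beta}\abs{\zeta-\eta}^\beta\). It then uses only the pointwise bound \(\abs{K_N(w)}\le 2\abs{1-w}^{-1}\) and the integrability of \(\abs{1-w}^{\alpha-\beta-1}\); the sup-norm bound comes the same way from \(\abs{1-w}^{\alpha-1}\). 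That proof is self-contained, needs neither the first estimate nor Bernstein's inequality, and has no case split.

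Your route instead converts the approximation rate into H\"older regularity through Bernstein's inequality. It is heavier but more robust: it works for any degree-\(N\) approximants with error \(\mathcal O(N^{-\alpha}\log N)\). This is the same Bernstein-type mechanism the paper uses later in the proof of \cref{theo:Fourier-Master}.

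You can streamline your argument with the bound \(\hnorm{p}_{\mathbb T,\beta}\le c_\beta(\deg p)^\beta\norm{p}_{L^\infty(\mathbb T)}\) from that proof. Applied to your dyadic blocks, it gives \(\hnorm{\mathcal F_N[g]}_{\mathbb T,\beta}\lesssim\sum_j j\,2^{-j(\alpha-\beta)}\norm{g}_{C^{0,\alpha}(\mathbb T)}\) uniformly in \(N\), with no case distinction.

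Two small repairs are needed:
\begin{enumerate}
\item The dyadic sum must be closed off by a last block \(\mathcal F_N-\mathcal F_{2^J}\) with \(2^J\le N<2^{J+1}\), which obeys the same bounds.
\item Your ``alternative'' aside does not work as stated. \(\norm{\mathcal F_N[g]-c}_{L^\infty(\mathbb T)}\) is not small for any constant \(c\), so the global Bernstein inequality only yields \(\mathcal O(N)\). It is the dyadic argument that actually produces \(N^{1-\alpha}\log N\).
\end{enumerate}
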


Truncating the Fourier series at order \(N\) is not suitable for our purpose. For each fixed \(A>0\), we need approximants \(f_N\) whose Hölder norms remain uniformly bounded on the shrinking annuli \(\cA_N=\mathbb D_{e^{\delta_N}}\setminus\overline{\mathbb D}_{e^{-\delta_N}}\), where \(\delta_N=\frac{A\log N}{N}\). More precisely, we require \(\tnorm{f_N}_{C^{\varkappa,\beta}(\cA_N)}=\mathcal O(1)\) for every \(\beta\in(0,\alpha)\).

As suggested by \cref{lem:Fourier-Master}, we therefore truncate at \(L_N=\lfloor \frac{N}{\log N}\rfloor\), which is inversely proportional to \(\delta_N\). This gives us control of the truncation away from the unit circle while preserving the algebraic approximation rate on the circle, up to an arbitrarily small loss in the exponent.
Indeed, let \(f\in C^{\varkappa,\alpha}(\mathbb T)\), then by \cref{theo:Bernstein0} we have for every \(\beta\in(0,\alpha)\)
\[\norm{f-\mathcal F_{L_N}[f]}_{L^\infty(\mathbb T)}\leq c_\alpha \norm{f}_{C^{\varkappa,\alpha}(\mathbb T)} \frac{\log L_N}{L_N^{\varkappa+\alpha}}=\mathcal O\Bigl(\norm{f}_{C^{\varkappa,\alpha}(\mathbb T)} N^{-\varkappa-\beta}\Bigr).\]

\begin{lemma}\label{lem:Fourier-Master}
Let \(\varkappa \in \mathbb Z_{\geq 0}\) and \(\beta\in(0,1)\), then there exists a constant \(c_\varkappa>0\), such that for any Laurent polynomial \(f\) it holds
\[\norm{f}_{C^{\varkappa,\beta}(\cA)}\leq c_\varkappa\, e^{2\delta \deg f} \norm{f}_{C^{\varkappa,\beta}(\mathbb T)},\]
where \(\cA = \mathbb D_{e^\delta}\setminus \overline{\mathbb D_{e^{-\delta}}}\) and \(\delta\in (0,1)\).
\end{lemma}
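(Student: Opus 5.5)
Write \(f(z)=\sum_{\abs k\leq n}a_k z^k\) with \(n=\deg f\) (the largest \(\abs k\) with \(a_k\neq0\)), and work with the polar formulation of the Hölder norm on \(\cA\). The plan is to compare each polar derivative on a circle \(\mathbb T_\rho\), \(e^{-\delta}<\rho<e^\delta\), with a derivative on \(\mathbb T\). The basic tool is the \emph{dilation operator}
\[
(M_\rho g)(\zeta)=\sum_k \rho^k c_k(g)\zeta^k,
\]
so that \(f(\rho\zeta)=(M_\rho f)(\zeta)\).

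First, the angular derivatives commute with the dilation: \(D_\tau^n f(\rho\zeta)=M_\rho(D_\tau^n f)(\zeta)\).

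Second, the radial derivatives are expressed through angular ones. Since \(\rho\,\partial_\rho(z^k)=k z^k=-i D_\tau(z^k)\), we get \(D_\rho^m D_\tau^n f(\rho\zeta)\) as a finite combination, with coefficients bounded by powers of \(\rho^{-1}\le e\), of terms \(M_\rho(D_\tau^{j}f)\) with \(j\le m+n\). Concretely, \(D_\rho^m\) acting on \(z^k\) gives \(k(k-1)\cdots(k-m+1)\rho^{-m}z^k\), a polynomial in \(k\) of degree \(m\), and each power \(k^j\) corresponds to \((-iD_\tau)^j\). So all derivatives of order \(\le\varkappa\) on \(\cA\) reduce to \(M_\rho\) applied to \(D_\tau^j f\), \(j\le\varkappa\), which are Laurent polynomials of the same degree \(n\) with \(C^{\varkappa-j,\beta}(\mathbb T)\)-norm at most \(\norm{f}_{C^{\varkappa,\beta}(\mathbb T)}\).

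The key step is a multiplier bound: for a Laurent polynomial \(g\) of degree \(\le n\) and \(\abs{\log\rho}\le\delta\),
\[
\norm{M_\rho g}_{L^\infty(\mathbb T)}\le C e^{2\delta n}\norm{g}_{L^\infty(\mathbb T)},
\]
and the same holds with \([\cdot]_{\mathbb T,\beta}\) in place of \(L^\infty\), because \(M_\rho\) is a convolution operator and commutes with translations. To prove this:
\begin{itemize}
\item Split \(g=g_++g_-\) into nonnegative and negative frequency parts. Only one dilation factor is then large on each part, but the Riesz projection is unbounded on \(L^\infty\), so do not project.
\item Instead, write \(M_\rho g=g*K_\rho\) with kernel \(K_\rho(\zeta)=\sum_k \rho^k V_n(k)\zeta^k\), where \(V_n\) is a de la Vallée Poussin-type multiplier: it equals \(1\) on \(\abs k\le n\), is supported in \(\abs k\le 2n\), and is piecewise linear.
\item Then \(\norm{M_\rho g}\le\norm{K_\rho}_{L^1}\norm{g}\). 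The kernel norm satisfies \(\norm{K_\rho}_{L^1}\le \sum_k\abs{\rho^kV_n(k)}\) crudely, but that is too big by a factor \(n\). Better: \(K_\rho\) is the product of \(\rho^{k}\) restricted to \(\abs k\le 2n\) and the de la Vallée Poussin multiplier. Bound the \(L^1\) norm via \(\norm{\widehat{\ }}_{L^1}\lesssim \norm{\mu}_{\ell^2}^{1/2}\norm{\Delta\mu}_{\ell^2}^{1/2}\)-type estimates (Carlson/Beurling), or by summation by parts twice using Fejér kernels.
\end{itemize}
Since the multiplier \(\mu(k)=\rho^kV_n(k)\) is bounded by \(e^{2\delta n}\), has total variation \(\lesssim e^{2\delta n}\), and is piecewise convex/concave with \(O(1)\) pieces, the second-difference summation by parts against Fejér kernels gives \(\norm{K_\rho}_{L^1}\lesssim e^{2\delta n}\) (using \(\delta<1\) so that \(\abs{\rho-1}\) is bounded). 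The exponent \(2\) in \(e^{2\delta\deg f}\) matches exactly the support \(\abs k\le2n\), which is why I expect this to be the intended route.

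Finally, there is the Hölder seminorm on \(\cA\) for pairs \(z,w\) in different circles. Split the difference \(\abs{F(z)-F(w)}\) into an angular move on one circle and a radial move along a ray. The angular part is controlled by the previous step. The radial part is controlled by a bound on \(D_\rho F\) of order \(e^{2\delta n}\) when \(\varkappa\)-th derivatives are involved. That bound would need one more derivative, so instead use \(\abs{F(r\zeta)-F(r'\zeta)}\le 2\sup\abs F\) when \(\abs{r-r'}\ge n^{-1}\). Otherwise, use \(\abs{r-r'}\,n\,e^{2\delta n}\sup_{\mathbb T}\abs F\) together with a Bernstein inequality for \(\rho\partial_\rho= -iD_\tau\), which gives \(\lesssim \abs{r-r'}^\beta n^{\beta}\)-scale bounds. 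The remaining factor \(n^\beta\) must be absorbed by the Hölder seminorm via Jackson approximation.

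This radial Hölder estimate is the main obstacle. If it fails, I would fall back to bounding the cross-circle term by \(\norm{D_\tau F}\)-free arguments, using the equivalent Wirtinger norm and holomorphy. For a holomorphic function, \(\partial_\rho\) and \(D_\tau\) are tied pointwise, so the Hölder modulus along rays equals that along arcs up to the factor \(\rho\).
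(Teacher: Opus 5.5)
\textbf{Verdict: genuine gap.} As written, the proposal does not prove the lemma, because the radial part of the Hölder seminorm on \(\cA\) is never established.

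\textbf{What is fine.} Your sup-norm bound and your Hölder quotient along a single circle are correct in substance. The de la Vallée Poussin machinery is unnecessary, though. The maximum principle applied to \(z^{M}g(z)\) on \(\mathbb D\) and to \(z^{-M}g(z)\) on \(\mathbb D^e\), with \(M=\deg g\), gives \(\norm{g}_{L^\infty(\cA)}\le e^{\delta M}\norm{g}_{L^\infty(\mathbb T)}\) at once. Applied to \(z\mapsto g(z\zeta)-g(z\eta)\), it also handles the tangential quotient. Your Fejér second-difference justification of \(\norm{K_\rho}_{L^1}\lesssim e^{2\delta n}\) is also not right as stated: \(\rho^kV_n(k)\) is not even in \(k\), and the crude second-difference sum is of order \((1+\delta n)e^{2\delta n}\). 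The bound itself can be rescued, but you do not need it.

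\textbf{The gap.} It is the one you flag yourself: the Hölder quotient between points on different circles, for the top-order derivatives. Your case split at \(\abs{r-r'}\sim n^{-1}\) uses sup-norm quantities and leaves a factor \(n^\beta\) in both branches. That factor is not bounded by \(c\,e^{2\delta n}\) uniformly in \(n\) and \(\delta\); take \(\delta n=1\) and \(n\to\infty\). \emph{Absorbing \(n^\beta\) by Jackson approximation} is not carried out. The holomorphy fallback is false as stated: the Cauchy--Riemann equations relate \(\rho\,\partial_\rho\) and \(D_\tau\) pointwise, but they do not transfer a modulus of continuity from arcs to rays.

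\textbf{The missing ingredient.} You need Bernstein's inequality in Hölder form, \(\norm{g'}_{L^\infty(\mathbb T)}\le\pi M^{1-\beta}\hnorm{g}_{\mathbb T,\beta}\) (Riesz, Stechkin; a dyadic Jackson-plus-Bernstein argument is one way to prove it). Combine it with the thinness of the annulus: \(\abs{t-s}\le e^\delta-e^{-\delta}\le c\,\delta\) for \(t,s\in(e^{-\delta},e^\delta)\). Since \(g'\) has degree at most \(M+1\), the maximum principle gives
\[
\frac{\abs{g(t\eta)-g(s\eta)}}{\abs{t-s}^\beta}\le\norm{g'}_{L^\infty(\cA)}\abs{t-s}^{1-\beta}\le c'\,e^{\delta M}(\delta M)^{1-\beta}\hnorm{g}_{\mathbb T,\beta}\le c'\,e^{\delta M}(1+\delta M)\hnorm{g}_{\mathbb T,\beta}\le c'\,e^{2\delta M}\hnorm{g}_{\mathbb T,\beta}.
\]
No splitting at scale \(1/M\) is needed. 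The factor \((\delta M)^{1-\beta}\) is absorbed by a second exponential, and this is where the exponent \(2\) comes from, not from the support \(\abs k\le 2n\) of a multiplier.

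\textbf{Closing the argument.} The identity \(\abs{t\zeta-s\eta}^2=(t-s)^2+ts\abs{\zeta-\eta}^2\) bounds the full quotient by the tangential and radial ones. Apply the radial estimate to \(g=f^{(j)}\), \(j\le\varkappa\). Equivalently, apply it to your \(M_\rho(D_\tau^jf)\), whose \(\rho\)-derivative is \(\rho^{-1}M_\rho(-iD_\tau^{j+1}f)\). This is exactly the paper's proof.
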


Before proving \cref{lem:Fourier-Master}, we recall Bernstein's inequality
for Laurent polynomials. Let \(f(z)=\sum_{\abs{k}\leq M}a_k z^k\) be a Laurent polynomial, then we define the degree of \(f\) by \(\deg f\coloneqq \max\{\tabs{k}\,|\, a_k \neq 0\}\). Then the Bernstein inequality reads as
\begin{equation}\label{eq:bernstein-ineq1}
\norm{D_\tau f}_{L^\infty(\mathbb T)}
\leq
\deg f \norm{f}_{L^\infty(\mathbb T)}.
\end{equation}
For a modern reference, see \cite[Thm.~1.22]{plonka2018numerical}. 
In addition, we need a Hölder variant of Bernstein's inequality.  For every \(\gamma\in(0,1)\) it  holds 
\begin{equation}\label{eq:bernstein-ineq3}
\norm{D_\tau f}_{L^\infty(\mathbb T)}\leq \pi (\deg f)^{1-\gamma}\hnorm{f}_{\mathbb T,\gamma}.
\end{equation}
This inequality follows from the Bernstein--Nikolsky--Stechkin inequality, see Stechkin \cite{Stechkin1948} and for a modern reference \cite[p.~131, Eq.~(12.1)]{DeVoreLorentz1993constapprox}. The particular case used here, already goes back to Riesz \cite{Riesz1914}.
Using \eqref{eq:polar-to-wirtinger}, one may equivalently replace
\(\tnorm{D_\tau f}_{L^\infty(\mathbb T)}\) by
\(\tnorm{f'}_{L^\infty(\mathbb T)}\) in \eqref{eq:bernstein-ineq1} and \eqref{eq:bernstein-ineq3}.

\begin{proof}
It suffices to prove the result for \(\varkappa=0\). The general case follows by applying the case \(\varkappa=0\) to \(f^{(j)}\), for \(0\le j\le\varkappa\), where the additional factors \(e^{2j\delta}\le e^{2\varkappa}\) are absorbed into the constant \(c_\varkappa\). The supremum term in the Hölder norm is controlled by the maximum principle. Considering \(z^{\deg f}f(z)\) we obtain a polynomial of degree \(2\deg f\), and using the maximum principle yields
\[e^{-\delta \deg f}\norm{f}_{L^\infty(\mathbb T_{e^{-\delta})}}=\norm{\smash{z^{\deg f} f(z)}}_{L^\infty(\mathbb T_{e^{-\delta}})}\leq \norm{f}_{L^\infty(\mathbb T)}.\]
Similarly, we obtain an upper bound for the exterior boundary of \(\cA\), and hence
\begin{equation}\label{eq:prf-lem-FM1}
\norm{f}_{L^\infty(\cA)}\leq e^{\delta\deg f}\norm{f}_{L^\infty(\mathbb T)}.
\end{equation}
To estimate the Hölder seminorm, we treat the radial and tangential contributions separately. We observe for distinct \(t\zeta,s\eta\in \cA\)  with \(\zeta,\eta\in \mathbb T\) and \(s,t\in(e^{-\delta},e^\delta)\) the relation \(\abs{t\zeta-s\eta}^2=(t-s)^2+ts\abs{\zeta-\eta}^2\), and hence
\[\frac{\abs{f(t\zeta)-f(s\eta)}}{\abs{t\zeta-s\eta}^\beta}\leq \frac{\abs{f(t\zeta)-f(t\eta)}+ \abs{f(t\eta)-f(s\eta)}}{((t-s)^2+ts\abs{\zeta-\eta}^2)^{\beta/2}}  \leq\frac{\abs{f(t\zeta)-f(t\eta)}}{(ts)^{\beta/2}\abs{\zeta-\eta}^\beta}+\frac{\abs{f(t\eta)-f(s\eta)}}{\abs{t-s}^\beta}.\]
Note that we may assume here that \(t\neq s\) and \(\eta \neq \zeta\), otherwise the corresponding fraction above vanishes.
To simplify notation we write \(M=\deg f\). Using \eqref{eq:prf-lem-FM1}, \eqref{eq:bernstein-ineq3} and \(1-e^{-x}\leq x\) for \(x\geq0\), we obtain
\begin{align*}
\frac{\abs{f(t\eta)-f(s\eta)}}{\abs{t-s}^\beta}&\leq \norm{f'}_{L^\infty(\cA)}\abs{t-s}^{1-\beta}\\
&\leq e^{\delta (1+M)}\norm{f'}_{L^\infty(\mathbb T)}\abs{t-s}^{1-\beta}\\
&\leq c e^{\delta M} (e^{\delta}-e^{-\delta})^{1-\beta} M^{1-\beta} \hnorm{f}_{\mathbb T,\beta}\\
&\leq c' e^{\delta M}(\delta M)^{1-\beta}\hnorm{f}_{\mathbb T,\beta}\\
&\leq  c' e^{\delta M}(1+\delta M)\hnorm{f}_{\mathbb T,\beta}\\
&\leq c'e^{2\delta M}\hnorm{f}_{\mathbb T,\beta},
\end{align*}
for some \(c,c'>0\). For the other term we get, analogously to \eqref{eq:prf-lem-FM1}, 
\begin{align*}
\abs{f(t\zeta)-f(t\eta)}&\leq \norm{f(\cdot \zeta)-f(\cdot \eta)}_{L^\infty (\cA)}\\
&\leq e^{\delta M} \norm{f(\cdot \zeta)-f(\cdot \eta)}_{L^\infty (\mathbb T)}\leq e^{\delta M} \hnorm{f}_{\mathbb T,\beta}\abs{\zeta-\eta}^\beta.
\end{align*}
Hence,
\begin{align*}
\frac{\abs{f(t\zeta)-f(t\eta)}}{(ts)^{\beta/2}\abs{\zeta-\eta}^\beta}&\leq e^{\beta \delta + \delta M}\hnorm{f}_{\mathbb T,\beta}.
\end{align*}
Combining these estimates gives the wanted result.
\end{proof}

These features are not specific to the Fourier truncations themselves. Any family of Laurent polynomials of degree \(\mathcal O(L_N)\) which approximates \(f\) on \(\mathbb T\) with the same accuracy as the Fourier truncation inherits the corresponding uniform boundedness on the annulus.

\begin{theorem}\label{theo:Fourier-Master}
Let \(\varkappa\in\mathbb Z_{\geq 0}\), \(\alpha\in (0,1)\) and \(f\in C^{\varkappa,\alpha}(\mathbb T)\). 
Let \(f_N\) be a sequence of Laurent polynomials with degree at most \(\mathcal O( L_N) \) such that \(\tnorm{f_N-f}_{L^\infty(\mathbb T)}=\mathcal O(N^{-\varkappa-\gamma})\) for all \(\gamma\in(0,\alpha)\). Let \(\mathcal A_N= \mathbb D_{e^{\delta_N}}\setminus \overline{ {\mathbb D}}_{e^{-\delta_N}}\) with \(\delta_N=\frac{A\log N}{N}\) for fixed \(A>0\). Then \(\tnorm{f_N-\mathcal F_{L_N}[f]}_{L^\infty(\mathcal A_N)}=\mathcal O(N^{-\varkappa-\beta})\)  and \(\tnorm{f_N}_{C^{\varkappa,\beta}(\mathcal A_N)  }=\mathcal O(1)\) for any \(\beta\in (0,\alpha)\).
\end{theorem}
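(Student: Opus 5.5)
We compare \(f_N\) with the Fourier truncation \(\mathcal F_{L_N}[f]\) and apply \cref{lem:Fourier-Master} to their difference, which is a Laurent polynomial of degree \(\mathcal O(L_N)\).

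\textbf{Step 1: the difference on the circle.} Set \(g_N\coloneqq f_N-\mathcal F_{L_N}[f]\). Then \(\deg g_N\le CL_N\) for some \(C>0\). By the hypothesis on \(f_N\), and by \cref{theo:Bernstein0} applied with truncation order \(L_N\) (as computed just before \cref{lem:Fourier-Master}),
\[
\norm{g_N}_{L^\infty(\mathbb T)}\le\norm{f_N-f}_{L^\infty(\mathbb T)}+\norm{f-\mathcal F_{L_N}[f]}_{L^\infty(\mathbb T)}=\mathcal O(N^{-\varkappa-\gamma})
\]
for every \(\gamma\in(0,\alpha)\).

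\textbf{Step 2: passing to the annulus.} The maximum-principle step \eqref{eq:prf-lem-FM1} in the proof of \cref{lem:Fourier-Master} gives
\[
\norm{g_N}_{L^\infty(\cA_N)}\le e^{\delta_N\deg g_N}\norm{g_N}_{L^\infty(\mathbb T)}.
\]
Here \(\delta_N\deg g_N\le \frac{A\log N}{N}\cdot\frac{CN}{\log N}=AC\), so the exponential factor is \(\mathcal O(1)\). Taking \(\gamma=\beta\) proves the first claim:
\[
\norm{f_N-\mathcal F_{L_N}[f]}_{L^\infty(\cA_N)}=\mathcal O(N^{-\varkappa-\beta}).
\]

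\textbf{Step 3: Hölder norm of the truncation.} For the second claim, split \(f_N=\mathcal F_{L_N}[f]+g_N\). By \cref{lem:Fourier-Master} and \eqref{eq:theo-Bernstein-uni2},
\[
\norm{\mathcal F_{L_N}[f]}_{C^{\varkappa,\beta}(\cA_N)}\le c_\varkappa e^{2\delta_NL_N}c_{\alpha,\beta}\norm{f}_{C^{\varkappa,\alpha}(\mathbb T)}=\mathcal O(1),
\]
since \(\delta_NL_N\le A\).

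\textbf{Step 4: Hölder norm of the difference (main obstacle).} The hypothesis controls \(g_N\) only in sup norm, so its \(C^{\varkappa,\beta}(\mathbb T)\) norm must be recovered through Bernstein's inequality. Write \(M=\deg g_N\le CL_N\le C'N/\log N\).

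\emph{Derivatives.} By iterating \eqref{eq:bernstein-ineq1},
\[
\norm{D_\tau^s g_N}_{L^\infty(\mathbb T)}\le M^s\norm{g_N}_{L^\infty(\mathbb T)}\quad\text{for } s\le\varkappa+1.
\]

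\emph{Seminorm.} Interpolating gives \(\hnorm{D_\tau^\varkappa g_N}_{\mathbb T,\beta}\lesssim M^{\varkappa+\beta}\norm{g_N}_{L^\infty}\). To see this, use the bound \(2\norm{D^\varkappa_\tau g_N}_\infty\) when \(\abs{\zeta-\eta}\ge 1/M\), and the mean value bound \(\norm{D^{\varkappa+1}_\tau g_N}_\infty\abs{\zeta-\eta}\) when \(\abs{\zeta-\eta}<1/M\).

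\emph{Combining.} Choose \(\gamma\in(\beta,\alpha)\). Then
\[
\norm{g_N}_{C^{\varkappa,\beta}(\mathbb T)}\lesssim M^{\varkappa+\beta}N^{-\varkappa-\gamma}\lesssim N^{\beta-\gamma}\to0.
\]
Applying \cref{lem:Fourier-Master} once more, with \(e^{2\delta_N M}=\mathcal O(1)\), bounds \(\norm{g_N}_{C^{\varkappa,\beta}(\cA_N)}\). The triangle inequality with Step 3 then gives \(\tnorm{f_N}_{C^{\varkappa,\beta}(\cA_N)}=\mathcal O(1)\).

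The delicate point is that the hypothesis must hold for \(\gamma\) strictly larger than \(\beta\). The freedom to pick \(\gamma\in(\beta,\alpha)\) absorbs the \(M^{\varkappa+\beta}\) loss from Bernstein's inequality, and logarithmic factors only help here, since \(M\lesssim N/\log N\).
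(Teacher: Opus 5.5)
Your proof is correct and follows essentially the same route as the paper. Both arguments use the maximum principle for $f_N-\mathcal F_{L_N}[f]$, then Bernstein's inequality together with the interpolation bound $\hnorm{g}_{\mathbb T,\beta}\lesssim(\deg g)^\beta\tnorm{g}_{L^\infty(\mathbb T)}$, choosing $\gamma\in(\beta,\alpha)$ to absorb the $M^{\varkappa+\beta}$ loss. The only cosmetic difference is that the paper applies Lemma~\ref{lem:Fourier-Master} once to $f_N$ and splits its norm on $\mathbb T$, whereas you split first and apply the lemma separately to $\mathcal F_{L_N}[f]$ and $g_N$.
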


\begin{proof} The first statement follows directly from the maximum principle. Similarly to \eqref{eq:prf-lem-FM1} we have
\[\norm{f_N-\mathcal F_{L_N}[f]}_{L^\infty(\cA_N)}\leq e^{c L_N \delta _N} \norm{f_N-\mathcal F_{L_N}[f]}_{L^\infty(\mathbb T)}=\mathcal O (N^{-\varkappa-\beta}),\]
for some \(c>0\) and any \(\beta \in (0,\alpha)\). Here we have used that \(L_N\delta_N \leq A\). To show the uniform Hölder boundedness, we set \(M_N=\max(\deg f_N,L_N)\) and observe
\begin{align*}
\norm{D_\tau^j f_N-D_\tau^j f}_{L^\infty(\mathbb T)}&\leq \norm{D_\tau^j f_N-D_\tau^j \mathcal F_{L_N}[f]}_{L^\infty(\mathbb T)}+\norm{D_\tau^j \mathcal F_{L_N}[f]-D_\tau^j f}_{L^\infty(\mathbb T)}\\
&\leq M_N^j\norm{f_N-\mathcal F_{L_N}[f]}_{L^\infty(\mathbb T)}+\norm{\mathcal F_{L_N}[D_\tau^jf]-D_\tau ^jf}_{L^\infty(\mathbb T)}
\end{align*}
where we have used the Bernstein inequality \eqref{eq:bernstein-ineq1}, and the commutation of \(\mathcal F_{L_N}\) and \(D_\tau\). Now using \cref{theo:Bernstein0} and uniform convergence assumption of \(f_N\), we get for any \(\beta\in(0,\alpha)\) and \(j=0,\dots,\varkappa\)
\[\norm{D_\tau^j f_N-D_\tau^j f}_{L^\infty(\mathbb T)}\leq \mathcal O(N^{j-\varkappa-\beta})\]
Hence, using \cref{lem:Fourier-Master} yields
\[\norm{f_N}_{C^{\varkappa,\beta}(\cA_N)}\leq c_\varkappa e^{2A}\norm{f_N}_{C^{\varkappa,\beta}(\mathbb T)}\leq c_\varkappa e^{2A}\hnorm{D_\tau^\varkappa f_N -D_\tau^\varkappa \mathcal F_{L_N}[f]}_{\mathbb T,\beta}+\mathcal O(1).\]
Let \(g\) be an arbitrary Laurent polynomial, we observe for \(\zeta,\eta\in\mathbb T\)
\begin{align*}
\abs{g(\zeta)-g(\eta)}&\leq\min\left(2\norm{g}_{L^\infty(\mathbb T)},\mfrac{\pi}{2}\norm{g'}_{L^\infty(\mathbb T)}\abs{\zeta-\eta}\right)\\&
\leq 2^{1-2\beta}\pi^\beta (\deg g)^\beta\norm{g}_{L^\infty(\mathbb T)}\abs{\zeta-\eta}^\beta,
\end{align*}
where we have used \(\min(a,b)\leq a^{1-\beta} b^{\beta}\) and \eqref{eq:bernstein-ineq1}. We conclude
\[\hnorm{g}_{\mathbb T,\beta}\leq c_\beta (\deg g)^\beta \norm{g}_{L^\infty(\mathbb T)}.\]
 Using this estimate together with the Bernstein inequality \eqref{eq:bernstein-ineq1} we get
\begin{align*}
e^A\hnorm{D_\tau^\varkappa f_N -D_\tau^\varkappa \mathcal F_{L_N}[f]}_{\mathbb T,\beta}&\leq \pi M_N^\beta e^A\norm{D_\tau^\varkappa f_N -D_\tau^\varkappa \mathcal F_{L_N}[f]}_{L^\infty(\mathbb T)}\\
&\leq \pi M_N^{\varkappa + \beta} e^A\norm{f_N-\mathcal F_{L_N}[f]}_{L^\infty(\mathbb T)}\\
&=\hbox{o}(1),
\end{align*}
where we used that \(\tnorm{f_N-\mathcal F_{L_N}[f]}_{L^\infty(\mathbb T)}=\mathcal O(N^{-\varkappa-\gamma})\) for any \(\gamma \in(\beta,\alpha)\).
\end{proof}

\begin{corollary}\label{coro:uni-conv-Fourier}
Let \(f\in C^{\varkappa,\alpha}(\mathbb T)\) and \(A>0\) be fixed and set \(f_{N,t}(\zeta)= \mathcal F_{L_N}[f](e^{-t}\zeta)\). Then for any \(\beta\in(0,\alpha)\) we have \(\tnorm{f-f_{N,0}}_{L^\infty(\mathbb T)}=\mathcal O(N^{-\varkappa-\beta})\) as well as
\begin{equation}\label{eq:coro-uni-conv-Fourier1}
\sup_{\abs t\leq \frac{A\log N}{N}}\norm{f-f_{N,t}}_{L^\infty(\mathbb T)}= \begin{cases}
\mathcal O\Bigl(\frac{\log N}{N}\Bigr),&\varkappa\geq 1,\\
\mathcal O \bigl(N^{-\beta}\bigr),&\varkappa=0.
\end{cases}
\end{equation} 
\end{corollary}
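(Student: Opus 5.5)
The first claim, \(\tnorm{f-f_{N,0}}_{L^\infty(\mathbb T)}=\mathcal O(N^{-\varkappa-\beta})\), is just \cref{theo:Bernstein0} applied with truncation index \(L_N\). Indeed, \(f_{N,0}=\mathcal F_{L_N}[f]\), so the error is at most \(c_\alpha\norm{f}_{C^{\varkappa,\alpha}(\mathbb T)}\log L_N/L_N^{\varkappa+\alpha}\). Since \(L_N\asymp N/\log N\), the logarithmic factors are absorbed by lowering the exponent from \(\alpha\) to any \(\beta<\alpha\). This is exactly the computation already displayed after \cref{theo:Bernstein0}.

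For \eqref{eq:coro-uni-conv-Fourier1} I would split
\[
f-f_{N,t}=(f-f_{N,0})+(f_{N,0}-f_{N,t}).
\]
The first term is \(\mathcal O(N^{-\varkappa-\beta})\) and hence negligible in both cases. For the second term, note that \(e^{-t}\zeta\) lies in the closed annulus \(\overline{\cA_N}\) with \(\delta_N=\frac{A\log N}{N}\) whenever \(\zeta\in\mathbb T\) and \(\abs t\le\delta_N\). Moreover, \(\abs{e^{-t}\zeta-\zeta}=\abs{1-e^{-t}}\le C\abs t\le C\delta_N\) for large \(N\). By \cref{theo:Fourier-Master}, applied with \(f_N=\mathcal F_{L_N}[f]\) (admissible, since its degree is \(L_N\) and the hypothesis on \(\tnorm{f_N-f}_{L^\infty(\mathbb T)}\) is the first claim), we have \(\tnorm{\mathcal F_{L_N}[f]}_{C^{\varkappa,\beta}(\cA_N)}=\mathcal O(1)\) for each \(\beta\in(0,\alpha)\). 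We could also take the slightly larger closed annulus, or just invoke \cref{lem:Fourier-Master} with \(\delta=\delta_N\) and \(\deg=L_N\), so that \(e^{2\delta_N L_N}\le e^{2A}\).

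The two cases now follow from the Hölder bound on the annulus along the radial segment from \(\zeta\) to \(e^{-t}\zeta\).

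If \(\varkappa\ge1\), the radial derivative \(D_\rho\mathcal F_{L_N}[f]\) is uniformly bounded on \(\cA_N\). The mean value theorem along the segment then gives
\[
\abs{f_{N,0}(\zeta)-f_{N,t}(\zeta)}\le \norm{D_\rho \mathcal F_{L_N}[f]}_{L^\infty(\cA_N)}\,\abs{1-e^{-t}}=\mathcal O\Bigl(\frac{\log N}{N}\Bigr).
\]

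If \(\varkappa=0\), only the \(\beta'\)-Hölder seminorm is uniformly bounded, for any \(\beta'\in(0,\alpha)\). This gives a bound \(\mathcal O(\delta_N^{\beta'})=\mathcal O((\log N)^{\beta'}N^{-\beta'})\). Choosing \(\beta'\in(\beta,\alpha)\) absorbs the logarithm and yields \(\mathcal O(N^{-\beta})\).

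There is no serious obstacle. The only points needing care are that the segment stays in the region where the uniform Hölder bound is available, and the bookkeeping of logarithmic factors via a slightly larger exponent.
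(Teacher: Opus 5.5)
Your proof is correct and follows the paper's argument essentially verbatim. You split off \(f-\mathcal F_{L_N}[f]\) via \cref{theo:Bernstein0}, then bound \(\mathcal F_{L_N}[f](\zeta)-\mathcal F_{L_N}[f](e^{-t}\zeta)\) with the uniform \(C^{1,\beta}\) bound (for \(\varkappa\geq1\)) or the \(C^{0,\gamma}\) bound with \(\gamma\in(\beta,\alpha)\) (for \(\varkappa=0\)) on the shrinking annulus from \cref{theo:Fourier-Master}, and your remark that the endpoints \(\abs t=\delta_N\) fall on the boundary of the open annulus (handled by continuity or by enlarging \(A\)) is a small point of care the paper leaves implicit.
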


\begin{proof}
The uniform estimate for the special case \(t=0\) follows directly from \cref{theo:Bernstein0}. For the second estimate we decompose
\begin{align*}
\abs{f(\zeta)-f_{N,t}(\zeta)}&\leq \abs{f(\zeta)-\mathcal F_{L_N}[f](\zeta)}+\abs{\mathcal F_{L_N}[f](\zeta)-\mathcal F_{L_N}[f](e^{-t}\zeta)}\\
&=\abs{\mathcal F_{L_N}[f](\zeta)-\mathcal F_{L_N}[f](e^{-t}\zeta)}+\mathcal O(N^{-\varkappa-\beta}).
\end{align*}
If \(\varkappa=0\), then \cref{theo:Fourier-Master} yields
\[\abs{\mathcal F_{L_N}[f](\zeta)-\mathcal F_{L_N}[f](e^{-t}\zeta)}\leq \hnorm{\mathcal F_{L_N}[f]}_{\cA_N,\gamma} \abs{\zeta-e^{-t}\zeta}^\gamma=\mathcal O(N^{-\beta}),\]
for any \(\gamma \in(\beta,\alpha)\) and \(\cA_N=\mathbb D_{e^{\delta_N}}\setminus \overline{\mathbb D}_{e^{-\delta_N}}\) with \(\delta_N=\frac{A\log N}{N}\). For \(\varkappa\geq 1\), we have
\[\abs{\mathcal F_{L_N}[f](\zeta)-\mathcal F_{L_N}[f](e^{-t}\zeta)}\leq \norm{\mathcal F_{L_N}[f]}_{C^{1,\beta} (\cA_N)} \abs{\zeta-e^{-t}\zeta}=\mathcal O\Bigl(\mfrac{\log N}{N}\Bigr),\]
again by \cref{theo:Fourier-Master}.
\end{proof}

\subsection{The Exterior Herglotz Transform}\label{sec:herglotz}
In this section, we recall the exterior Herglotz transform, which provides a convenient way to construct holomorphic functions on \(\mathscr D^e\) with prescribed real boundary values. For a function \(f\in C(\mathbb T)\), the exterior Herglotz transform \(\Herg[f]\) can be interpreted, up to a constant factor, as the projection of \(f\) onto its negatively indexed Fourier coefficients.

The Herglotz transform can also be studied on more general function spaces, such as \(L^p(\mathbb T)\), and is closely related to the theory of Hardy spaces. In particular, in the exterior setting it is essentially the analytic projection onto the exterior Hardy space. For our purposes, however, it will be sufficient to work with Hölder continuous boundary data.

Let \(f\in C^{0,\alpha}(\mathbb T)\). We define its exterior Herglotz transform by
\[\Herg[f](z)\coloneqq\mfrac{1}{2\pi}\int_{\mathbb T}f(\zeta)\frac{z+\zeta}{z-\zeta}\abs{\d\zeta},\qquad z\in\mathbb D^e.\]
This defines a holomorphic function on \(\mathbb D^e\). In fact, for \(z\in\mathbb D^e\), we have
\begin{equation}\label{eq:herglotz0}
\Herg[f](z) =  c_0(f)+2\sum_{k=1}^\infty c_{-k}(f)z^{-k},
\end{equation}
where \(c_k(f)\) denotes the \(k\)-th Fourier coefficient of \(f\).

Let \(f\in C^{\varkappa,\alpha}(\mathbb T)\) for \(\varkappa\in \mathbb Z_{\geq 0}\) and \(\alpha\in(0,1)\), then the Herglotz transform \(\Herg[f]\) and its derivatives \(\Herg[f]^{(j)}\), for \(j=1,\dots,\varkappa\), extend continuously to \(\mathbb T\). Moreover, the boundary restriction of \(\Herg[f]\) belongs to \(C^{\varkappa,\alpha}\) and satisfies
\begin{equation}\label{eq:herglotz-estimate0}
\norm{\smash{\Herg[f]}}_{C^{\varkappa,\alpha}(\mathbb T)} \leq C_{\varkappa,\alpha} \norm{f}_{C^{\varkappa,\alpha}(\mathbb T)},
\end{equation}
for some constant \(C_{\varkappa,\alpha}>0\) independent of \(f\). Although this result is classical, we could not find the precise estimate \eqref{eq:herglotz-estimate0} in the literature. We therefore include a proof in \cref{app:herglotz} for completeness.

As a direct consequence of \eqref{eq:herglotz0} together with the continuous extension to \(\mathbb T\), we obtain for real-valued \(f\)
\[\Re \Herg[f](\zeta)=c_0(f)+\sum_{k=1}^\infty\Bigl(c_{-k}(f)\zeta^{-k}+c_k(f)\overline \zeta^{-k}\Bigr)= f(\zeta),\qquad \zeta\in\mathbb T,\]
where we have used that \(c_{-k}(f)=\overline{c_k(f)}\). In other words, the real part of the Herglotz transform recovers \(f\) on the boundary.

In addition to \eqref{eq:herglotz-estimate0}, we will need a bound purely in terms of the supremum norm. Such an estimate is not available in general, but it does hold for Laurent polynomials, up to a logarithmic loss. Let \(P\) be a Laurent polynomial of degree \(M\), then
\begin{equation}\label{eq:herglotz-estimate1}
\norm{\Herg[P]}_{L^\infty(\mathbb T)} \leq c\log(M+2)\norm{P}_{L^\infty(\mathbb T)},
\end{equation}
where \(c>0\) is independent of \(P\) and \(M\). To observe \eqref{eq:herglotz-estimate1} we  express \(\Herg[P]\) as a convolution with the one-sided Dirichlet-type kernel \(K_M^-(\zeta)=1+2\sum_{k=1}^M\zeta^{-k}\). Indeed,
\[\Herg[P](\zeta)  = \mfrac{1}{2\pi}\int_{\mathbb T}P(\zeta\bar \eta)K_M^-(\eta)\abs{\d\eta}.\]
Consequently,
\[\norm{\Herg[P]}_{L^\infty(\mathbb T)}\leq\norm{P}_{L^\infty(\mathbb T)}\norm{\smash{K_M^-}}_{L^1(\mathbb T)}.\]
Using standard estimates for Dirichlet-type kernels we have \(\tnorm{K_M^-}_{L^1(\mathbb T)}=\mathcal O(\log(M+2))\).

\subsection{Conformal Approximation}\label{sec:conf-approx}
In the context of this article, we say that  a function \(\phi\) is a conformal mapping if \(\phi\) is biholomorphic. In other words, \(\phi\) is holomorphic and bijective. Of special interest to us are exterior conformal mappings. Given a Jordan domain \(\mathscr D\), the associated exterior conformal mapping is the unique conformal mapping \(\phi\colon \mathscr D^e\to\mathbb D^e\) , satisfying \(\phi(\infty)=\infty\) and \(\phi'(\infty)>0\). Here \(\mathbb D^e = \mathbb C \setminus \overline{\mathbb D}\) and \(\mathscr D^e=\mathbb C\setminus \overline{\mathscr D}\) denote the exterior of \(\mathbb D\) and \(\mathscr D\), respectively. The existence and uniqueness of \(\phi\) follow from the Riemann mapping theorem. The boundary behavior of such conformal mappings is strongly related to the regularity of \(\mathscr D\). In particular, if \(\partial \mathscr D\) is \(C^{k,\alpha}\)-smooth, with \(k\geq 1\), then the theorem of Kellogg--Warschawski \cite[Thm.~3.5,3.6]{pommerenke2013boundary} guarantees that \(\phi^{(j)}\) for \(j=0,\dots,k\) extends continuously onto the boundary of \(\mathscr D\) and that \(\phi'\) never vanishes on \(\mathscr D^c\). Thus, by continuous extension to the boundary, we may regard \(\phi\) as a map \(\phi\colon \mathscr D^c\to \mathbb D^c\). In particular, the theorem of Carathéodory \cite[Thm. 2.6]{pommerenke2013boundary}, ensures that the continuous extension remains injective.

The main objective of this section is to construct a suitable approximation \(\phi_N\) of \(\phi\). Let  \(\psi=\phi^{-1}\colon \mathbb D^c\to \mathscr D^c\) denote the inverse exterior conformal map. As before, the boundary restriction of \(\psi\) belongs to \( C^{\varkappa+1,\alpha}(\mathbb T)\). We define now
\begin{equation}\label{eq:def-psi_N}
\psi_N\coloneqq \mathcal F_{L_N} [\psi]=\mathcal F_{L_N} [\phi^{-1}],
\end{equation}
where \(L_N=\lfloor \frac{N}{\log N}\rfloor\).
As we will observe in \cref{theo:conf-inverse}, the mappings \(\psi_N\) are invertible on  \(\mathbb D^e_{r_N}\) for \(r_N = e^{-\delta_N}\) and \(\delta_N=\frac{A\log N}{N}\) with \(A>0\). Here \(\mathbb D_{r_N}^e\) denotes the exterior of the disk of radius \(r_N\). This leads to the natural definition
\begin{equation}\label{eq:def-phi_N}
\phi_N\coloneqq \psi_N^{-1}= (\mathcal F_{L_N}[\phi^{-1}])^{-1}
\end{equation}
Whenever \(\psi_N\) is injective on \(\mathbb D_\rho^c\), the complement of the disk of radius \(\rho>0\), we define the regions
\begin{equation}\label{eq:def-D_(N,r)^c}
\mathscr D_{N,\rho}^c\coloneqq \psi_N(\mathbb D_{\rho}^c),\qquad \mathscr D_{N,\rho} = \mathrm {Int}(\psi_N(\mathbb T_\rho)),
\end{equation}
where \(\mathrm{Int}(\psi_N(\mathbb T_\rho))\) denotes the interior of the closed curve \(\psi_N(\mathbb T_\rho)\). In particular, by \cref{theo:conf-inverse}, these definitions apply for \(\rho \geq e^{-\delta_N}\) and all sufficiently large \(N\).
These sets can be seen as the analogue of \(\mathbb D_\rho^e\) and \(\mathbb D_\rho\) after changing the coordinates according to \(\psi_N\).
The remainder of this section is devoted to proving the required invertibility of \(\psi_N\) and to establishing that the inverse maps \(\phi_N\) approximate \(\phi\) with the desired accuracy.

The choice of the \(L_N\)-th rather than the \(N\)-th Fourier truncation of \(\psi\) may seem unnatural at first. As in \cref{sec:Fourier}, this choice is motivated by the need for uniform bounds on annular neighborhoods of \(\mathbb T\) of width \(\frac{A\log N}{N}\), for any fixed \(A>0\). With the \(N\)-th truncation, such estimates are only available when \(A<\varkappa+\alpha-1\).

\begin{lemma}\label{lem:conf-inject}
Let \(f\)  be holomorphic in a neighborhood of \(\mathbb D^c\) and assume that 
\begin{enumerate}
\item[(i)] \(f\) is injective on \(\mathbb T\) and \(t\mapsto f(e^{it})\) is positively oriented,
\item[(ii)] \(f(z)=az +\mathcal O(1)\) as \(z\to \infty\) with \(a\neq 0\).
\end{enumerate}
Then \(f\) is injective on \(\mathbb D^c\) and \(f(\mathbb D ^e)\) is the unbounded component of \(\mathbb C\setminus f(\mathbb T)\).
\end{lemma}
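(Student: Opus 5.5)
The plan is to use the argument principle, i.e. a Darboux-type theorem for the exterior of the disk. By (i), \(\Gamma\coloneqq f(\mathbb T)\) is a Jordan curve. Write \(G\) for its interior (bounded component) and \(E\) for the unbounded component of \(\mathbb C\setminus\Gamma\). For \(w\notin\Gamma\) we count the solutions of \(f(z)=w\) in \(\mathbb D^e\). By (ii), \(f\) has a simple pole at \(\infty\), so \(f\) is holomorphic on \(\mathbb D^e\) and \(f(z)-w\) has no zeros for \(|z|\) large.

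Fix \(w\notin \Gamma\) and choose \(R\) large enough that \(f(z)\neq w\) for \(|z|\ge R\). Apply the argument principle on the annulus \(1<|z|<R\). The number of zeros of \(f-w\) in \(\mathbb D^e\), counted with multiplicity, equals
\[\mfrac{1}{2\pi i}\int_{\mathbb T_R}\frac{f'(z)}{f(z)-w}\,\d z-\mfrac{1}{2\pi i}\int_{\mathbb T}\frac{f'(z)}{f(z)-w}\,\d z,\]
with both circles positively oriented.

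\begin{itemize}
\item The first term is the winding number of \(f(\mathbb T_R)\) around \(w\). Since \(f(z)=az(1+\mathcal O(1/z))\), a homotopy argument (or letting \(R\to\infty\), since the integral equals \(1+\mathcal O(1/R)\) and is an integer) shows it equals \(1\).
\item The second term is the winding number \(n(\Gamma,w)\). Because \(\Gamma\) is a positively oriented Jordan curve, this is \(1\) for \(w\in G\) and \(0\) for \(w\in E\).
\end{itemize}

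Hence \(f-w\) has exactly one zero in \(\mathbb D^e\) when \(w\in E\), and none when \(w\in G\). Points of \(\Gamma\) are not attained in \(\mathbb D^e\) either. Suppose \(f(z_0)=w_0\in\Gamma\) with \(|z_0|>1\). By the open mapping theorem, \(f\) maps a small neighbourhood of \(z_0\) inside \(\mathbb D^e\) onto a neighbourhood of \(w_0\). That neighbourhood meets \(G\), since every point of a Jordan curve lies on the boundary of both components, which contradicts the zero count for \(w\in G\).

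It follows that \(f\) is injective on \(\mathbb D^e\) and \(f(\mathbb D^e)=E\). Injectivity on all of \(\mathbb D^c\) then follows, because \(f\) is injective on \(\mathbb T\) and \(f(\mathbb T)=\Gamma\) is disjoint from \(f(\mathbb D^e)\subset E\).

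The main subtlety is the second bullet. It requires the Jordan curve theorem together with the fact that a positively oriented Jordan curve has winding number \(1\) about interior points. This is where the orientation hypothesis in (i) is used. With the opposite orientation, the count for \(w\in G\) would become \(2\) and the conclusion would fail. I would quote this topological fact as standard rather than reprove it.
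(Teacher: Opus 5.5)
Your proof is correct and is exactly the argument-principle computation the paper points to, namely the exterior counterpart of Pommerenke's Lemma~1.1. The simple pole at infinity contributes winding number $1$ on large circles, and the positive orientation of $f(\mathbb T)$ makes the zero count $1-n(\Gamma,w)$ equal to $0$ on the bounded component and $1$ on the unbounded one; your open-mapping step excluding values on $\Gamma$ fills in a detail the paper leaves implicit.
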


The result presented in \cref{lem:conf-inject}, is a standard injectivity result and may be viewed as the exterior counterpart of \cite[Lem.~1.1]{pommerenke1975univalent}. Its proof follows from the same argument-principle computation, with the simple pole at infinity accounting for the orientation assumption.

\begin{theorem}\label{theo:conf-inverse}
Let \(\varkappa\in \mathbb Z_{\geq 0}\), \(\alpha\in(0,1)\), and let \(\mathscr D\) be a \(C^{\varkappa+1,\alpha}\)-smooth Jordan domain. Let \(\psi\colon \mathbb D^e\to \mathscr D^e\) be the inverse exterior conformal map, and set \(\psi_N\coloneqq \mathcal F_{L_N}[\psi]\). For fixed \(A>0\), define \(r_N=e^{-\frac{A\log N}{N}}\). Then, for all sufficiently large \(N\), \(\psi_N\) extends conformally to \(\mathbb D_{r_N}^e\). Hence, with \(\mathscr D_{N,r_N}^e\coloneqq \psi_N(\mathbb D_{r_N}^e)\), the map \(\psi_N\colon \mathbb D_{r_N}^e\to \mathscr D_{N,r_N}^e\) is conformal. Furthermore, for every \(\beta\in(0,\alpha)\),
\[\norm{\psi_N-\psi}_{L^\infty(\mathbb D^c)}= \mathcal O(N^{-\varkappa-1-\beta}).\]
\end{theorem}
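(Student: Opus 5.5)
The plan is to prove the approximation estimate first and then obtain injectivity on \(\mathbb D_{r_N}^e\) by applying \cref{lem:conf-inject} to the dilated maps \(f_\rho(z)\coloneqq\psi_N(\rho z)\) for \(\rho\in[r_N,1]\). Note that \(\psi\) is holomorphic on \(\mathbb D^e\) with a simple pole at infinity. Hence its Laurent expansion \(\psi(z)=az+\sum_{k\ge0}b_kz^{-k}\), \(a=1/\phi'(\infty)>0\), has only the indices \(k\le1\). Consequently \(\psi_N=az+\sum_{k=0}^{L_N}b_kz^{-k}\) is holomorphic on \(\mathbb C\setminus\{0\}\), and \(\psi_N-\psi=-\sum_{k>L_N}b_kz^{-k}\) is holomorphic on \(\mathbb D^e\), vanishes at infinity, and is continuous up to \(\mathbb T\). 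By the maximum principle on \(\mathbb D^c\cup\{\infty\}\),
\[
\norm{\psi_N-\psi}_{L^\infty(\mathbb D^c)}=\norm{\psi_N-\psi}_{L^\infty(\mathbb T)}.
\]
Since \(\psi|_{\mathbb T}\in C^{\varkappa+1,\alpha}(\mathbb T)\), \cref{theo:Bernstein0} gives the bound \(\mathcal O(\log L_N/L_N^{\varkappa+1+\alpha})=\mathcal O(N^{-\varkappa-1-\beta})\) for every \(\beta<\alpha\). This settles the error estimate.

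For conformality, the key step is to show that each curve \(f_\rho(\mathbb T)\) is a positively oriented Jordan curve. By \cref{theo:Fourier-Master}, applied with \(\varkappa+1\) in place of \(\varkappa\) and \(f_N=\psi_N\), the family \(\psi_N\) is bounded in \(C^{1,\beta}(\cA_N)\), and hence in \(C^{1,\beta}\) on \(\{r_N\le|z|\le1\}\). I would then compare \(\psi_N'(\rho\zeta)\) with \(\psi'(\zeta)\):
\[
\abs{\psi_N'(\rho\zeta)-\psi'(\zeta)}\le\abs{\psi_N'(\rho\zeta)-\psi_N'(\zeta)}+\abs{\psi_N'(\zeta)-\psi'(\zeta)}.
\]
The first term is \(\mathcal O(\delta_N^\beta)\) by the Hölder bound. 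The second is \(\mathcal O(N^{-\varkappa-\beta})\), using the commutation \(D_\tau\mathcal F_{L_N}=\mathcal F_{L_N}D_\tau\) together with \cref{theo:Bernstein0} applied to \(D_\tau\psi\in C^{\varkappa,\alpha}\). Thus \(\psi_N'(\rho\zeta)\to\psi'(\zeta)\) uniformly, and similarly \(\psi_N(\rho\zeta)\to\psi(\zeta)\) uniformly.

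Next I would convert this into injectivity on each circle. By Kellogg--Warschawski, \(\psi'\) is continuous and nonvanishing on \(\mathbb T\), so \(\abs{\psi'}\ge m>0\) there, and \(\psi|_{\mathbb T}\) is bi-Lipschitz: \(\abs{\psi(\zeta)-\psi(\eta)}\ge c\abs{\zeta-\eta}\). To transfer this to \(f_\rho\), split into two regimes.
\begin{enumerate}
\item[(a)] For \(\abs{\zeta-\eta}\le\eta_0\), with \(\eta_0\) chosen via the uniform continuity of \(\psi'\) and uniformly for large \(N\), write
\[
f_\rho(\zeta)-f_\rho(\eta)=\rho\int_{[\eta,\zeta]}\psi_N'(\rho w)\,\d w.
\]
Since \(\psi_N'(\rho w)\) is uniformly close to \(\psi'(\eta)\) along the chord, this is bounded below by \(\tfrac m2\abs{\zeta-\eta}\).
\item[(b)] For \(\abs{\zeta-\eta}\ge\eta_0\), uniform convergence of \(f_\rho\) to \(\psi\) combined with \(\abs{\psi(\zeta)-\psi(\eta)}\ge c\eta_0\) gives \(f_\rho(\zeta)\ne f_\rho(\eta)\) for large \(N\).
\end{enumerate}
Orientation is preserved because the winding number of \(\theta\mapsto f_\rho(e^{i\theta})\) depends continuously on \(\rho\) and \(N\), and equals that of \(\psi|_{\mathbb T}\), which is positive. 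Also \(f_\rho(z)=a\rho z+\mathcal O(1)\) at infinity. So \cref{lem:conf-inject}, applied at \(\rho=r_N\) to \(f_{r_N}\) (holomorphic near \(\mathbb D^c\) because \(\psi_N\) is holomorphic on \(\mathbb C\setminus\{0\}\)), gives injectivity of \(\psi_N\) on \(\mathbb D^c_{r_N}\), and hence conformality on \(\mathbb D_{r_N}^e\) onto its image \(\mathscr D^e_{N,r_N}\).

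The main obstacle is the short-range injectivity in regime (a) when \(\varkappa=0\). Then \(\psi_N'\) is only controlled in \(C^{0,\beta}\) on the annulus, so I must make sure the chord argument uses only uniform closeness of \(\psi_N'(\rho\,\cdot)\) to the fixed continuous function \(\psi'\), not higher derivatives. The local-uniform-convergence estimate above does exactly this, since the shift \(\rho\zeta\) versus \(\zeta\) is handled by the Hölder bound from \cref{theo:Fourier-Master}.
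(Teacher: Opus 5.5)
Your error estimate is the paper's (Laurent structure of \(\psi\), maximum principle, \cref{theo:Bernstein0}). So is the skeleton of the conformality part: apply \cref{lem:conf-inject} to \(\psi_N(r_N\cdot)\), using uniform convergence of this map and its derivative on \(\mathbb T\). You differ in how you prove injectivity on \(\mathbb T\). The paper argues by contradiction: coincident pairs must merge because \(\psi\) is injective, and the mean value theorem applied to the real and imaginary parts of \(t\mapsto\psi_N(r_Ne^{it})\) then forces \(\psi'=0\) at the limit point. Your short-range/long-range split instead aims at a lower bi-Lipschitz bound uniform in \(N\). That is a fine alternative, but step (a) does not work as written.

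\textbf{The chord step.} For \(\zeta,\eta\in\mathbb T\) with \(\abs{\zeta-\eta}=d\), the chord \([\eta,\zeta]\) lies in \(\overline{\mathbb D}\), and its midpoint has modulus \(\sqrt{1-d^2/4}\approx 1-d^2/8\). Since \(\eta_0\) is fixed independently of \(N\), the points \(\rho w\) on the chord leave the annulus of width \(\mathcal O(\log N/N)\) on which \cref{theo:Fourier-Master} controls \(\psi_N'\). Moreover, your convergence \(\psi_N'(\rho\zeta)\to\psi'(\zeta)\) was only proved for \(\abs{\zeta}=1\). At a fixed depth \(\abs{z}=1-c\) you have no control at all. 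Indeed, if \(\sup_N\max_{\mathbb T_{1-c}}\abs{\psi_N'}<\infty\), Cauchy's estimates would give \(\abs{b_k}=\mathcal O((1-c)^k)\), so \(\psi\) would extend holomorphically to \(\{\abs{z}>1-c\}\). This fails for a general \(C^{\varkappa+1,\alpha}\) boundary.

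\textbf{The repair.} Integrate along the shorter arc \(\gamma\subset\mathbb T\) from \(\eta\) to \(\zeta\) instead of the chord. Then \(\int_\gamma \d w=\zeta-\eta\), and the length of \(\gamma\) is at most \(\frac{\pi}{2}\abs{\zeta-\eta}\). Write \(f_\rho'(w)=\psi'(\eta)+\bigl(f_\rho'(w)-\psi'(w)\bigr)+\bigl(\psi'(w)-\psi'(\eta)\bigr)\). Choose \(\eta_0\) so that the last bracket is at most \(m/8\), and take \(N\) so large that \(\norm{f_\rho'-\psi'}_{L^\infty(\mathbb T)}\le m/8\). You then get \(\abs{f_\rho(\zeta)-f_\rho(\eta)}\ge(1-\frac{\pi}{8})m\abs{\zeta-\eta}\ge\frac{m}{2}\abs{\zeta-\eta}\). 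This uses only the convergence on \(\mathbb T\) that you have already established. The paper's argument avoids the issue automatically, since it works with the parametrization \(t\mapsto\psi_N(r_Ne^{it})\) and never leaves the circle.

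\textbf{Two small points.} First, a winding number cannot depend ``continuously on \(N\)''. What you actually use is \(\norm{f_{r_N}-\psi}_{L^\infty(\mathbb T)}\to0\): for a fixed \(p\in\mathscr D\), the winding numbers of \(f_{r_N}(\mathbb T)\) and \(\psi(\mathbb T)\) about \(p\) then agree for large \(N\). Second, only \(\rho=r_N\) is needed, so the family \(\rho\in[r_N,1]\) can be dropped. With the arc in place of the chord, your proof is complete.
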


\begin{proof}
Set \(\theta_N(z)=\psi_N(r_Nz)\). By the Kellogg–Warschawski theorem, \(\psi\) extends \(C^{\varkappa+1,\alpha}\)-smoothly up to \(\mathbb T\). Hence, \cref{coro:uni-conv-Fourier} implies that \(\theta_N\to\psi\) and \(\theta_N'\to\psi'\) uniformly on \(\mathbb T\). To apply \cref{lem:conf-inject}, it remains to show that, for sufficiently large \(N\), the restriction of \(\theta_N\) to \(\mathbb T\) is injective and the curve \(\theta_N(\mathbb T)\) is positively oriented. By the uniform convergence of \(\theta_N\) and \(\theta_N'\) on \(\mathbb T\), a classical winding number argument shows that \(\theta_N(\mathbb T)\) has the same orientation as \(\psi(\mathbb T)\). Since the real Jacobian of a conformal map is positive, \(\psi(\mathbb T)\) is positively oriented, and therefore so is \(\theta_N(\mathbb T)\) for sufficiently large \(N\). It remains to prove the injectivity of \(\theta_N\) on \(\mathbb T\).

We will argue by contradiction. We may assume that \(\theta_N\) is not injective for any \(N\in\mathbb N\), otherwise we have to pick an appropriate subsequence. For each \(N\) we pick \(\zeta_{1,N},\zeta_{2,N}\in \mathbb T\) with \(\zeta_{1,N}\neq \zeta_{2,N}\) and \(\theta_N(\zeta_{1,N})=\theta_N(\zeta_{2,N})\). We may further assume that the sequences \(\zeta_{j,N}\) are converging towards some limits \(\zeta_j\in \mathbb T\), for \(j=1,2\), otherwise we can pick again suitable subsequences.
Due to the uniform convergence \(\theta_N\to \psi\) on \(\mathbb T\) we have
\[0= \lim_{N\to \infty}\abs{\theta_{N}(\zeta_{1,N})-\theta_{N}(\zeta_{2,N})}=\abs{\psi(\zeta_1)-\psi(\zeta_2)}.\]
Since \(\psi\) is injective on \(\mathbb T\), we conclude that the limits \(\zeta_{j}\) coincide. 

Let \(\zeta^* = \zeta_j\) and take \(\tau\in\mathbb R\) such that \(e^{i\tau}=\zeta^*\).
 Now we define \(u_N(t)= \Re \theta_N(e^{it})\) and \(v_N(t)= \Im \theta_N(e^{it})\). Then we can find real numbers \(p_N,q_N\in (\tau-\pi,\tau+\pi)\) such that \(e^{ip_N}\), \(e^{iq_N}\) lie on the shorter arc connecting \(\zeta_{1,N}\) and \(\zeta_{2,N}\), as well as  \(u_N'(p_N)=0\)  and \(v_N'(q_N)=0\). The existence of such points is ensured by the mean value theorem. Since \(e^{ip_N}\) and \(e^{iq_N}\) lie between \(\zeta_{1,N}\) and \(\zeta_{2,N}\) we obtain that \(p_N\) and \(q_N\) both converge to \(\tau\). We conclude that \(h_N(t)\coloneqq \theta_N(e^{it})\) satisfies \(\lim_{N\to\infty}h_N'(\tau)=0\), because of the uniform convergence of \(h'_N\to h'\). Hence,
\[ie^{i\tau}\psi'(\zeta^*)=\lim_{N\to\infty} i e^{i\tau} \theta_N'(e^{i\tau})=\lim_{N\to \infty} h_N'(\tau)=0.\]
Consequently \(\psi'(\zeta^*)=0\). However, this contradicts \(\psi'\neq 0\) which is guaranteed due to the \(C^{\varkappa+1,\alpha}\)-smooth boundary of \(\mathscr D\). We conclude \(\theta_N\) is injective on \(\mathbb D^c\) by \cref{lem:conf-inject}. 

Lastly, the  uniform convergence of \(\psi_N\to \psi\) on \(\mathbb D^c\) results immediately from the maximum principle and  \cref{theo:Bernstein0}.
\end{proof}

Before stating the counterpart to \cref{theo:conf-inverse} for the approximation of \(\phi\) by \(\phi_N\), we first establish the inclusion \(\mathscr D^c\subset\mathscr D_{N,r_N}^c\), where \(r_N=e^{-\delta_N}\) and \(\delta_N=\frac{A\log N}N\). This ensures that \(\phi_N\) is defined on \(\mathscr D^c\). The following lemma gives the inclusion needed here, together with a more precise description that will be used later in \cref{sec:asymp expansion}.

\begin{lemma}\label{lem:min-curve}
Let \(\varkappa\in \mathbb Z_{\geq 0}\), \(\alpha\in(0,1)\) and \(\mathscr D\) be a \(C^{\varkappa+1,\alpha}\)-smooth Jordan domain.  Then for every \(R> 0\) and \(A>R\) it holds for sufficiently large \(N\)
\begin{equation}\label{eq:lem-min-curve}
\left\{ z\left|\,\dist(z,\mathscr D^c) \leq \mfrac{R}{\norm{\phi'}_{L^\infty(\partial \mathscr D)}}\mfrac{\log 	N}{N} \right\}\right.\subset \mathscr D_{N,r_N}^c,
\end{equation}
where \(r_N=e^{-\frac{A\log N}{N}}\) and \(\mathscr D_{N,r_N}^c\) is defined in \eqref{eq:def-D_(N,r)^c}.
\end{lemma}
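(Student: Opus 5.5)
We show that every point \(z\) of the left-hand side of \eqref{eq:lem-min-curve} with \(z\in\mathscr D\) lies outside the curve \(\psi_N(\mathbb T_{r_N})\); points of \(\mathscr D^c\) are treated by the same argument. By \cref{theo:conf-inverse}, \(\psi_N\) is conformal on \(\mathbb D^e_{r_N}\). By \cref{lem:conf-inject}, applied to \(\theta_N(\zeta)=\psi_N(r_N\zeta)\) as in the proof of \cref{theo:conf-inverse}, the set \(\mathscr D^c_{N,r_N}=\psi_N(\mathbb D^c_{r_N})\) is the closure of the unbounded component of \(\mathbb C\setminus\psi_N(\mathbb T_{r_N})\). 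It therefore suffices to show two things: (a) the region \(\Omega\) on the left-hand side is connected and contains points far out (near \(\infty\)) lying in \(\mathscr D^c_{N,r_N}\); and (b) \(\Omega\) does not meet the curve \(\psi_N(\mathbb T_{r_N})\). Part (a) is clear, since \(\Omega\supset\mathscr D^c\) is connected and unbounded. The core of the proof is (b), which amounts to the lower bound
\[
\dist\bigl(\psi_N(\mathbb T_{r_N}),\mathscr D^c\bigr)>\mfrac{R}{\norm{\phi'}_{L^\infty(\partial\mathscr D)}}\mfrac{\log N}{N}.
\]

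To prove this bound, fix \(\zeta\in\mathbb T\) and compare \(\psi_N(r_N\zeta)\) with the boundary point \(\psi_N(\zeta)\), which lies within \(\mathcal O(N^{-\varkappa-1-\beta})\) of \(\psi(\zeta)\in\partial\mathscr D\). Taylor expansion along the radius gives
\[
\psi_N(r_N\zeta)=\psi_N(\zeta)-(1-r_N)\zeta\,\psi_N'(\zeta)+\mathcal O\bigl(\delta_N^{1+\beta}\bigr),
\]
where \(\delta_N=\frac{A\log N}{N}\). The remainder is controlled by the uniform \(C^{1,\beta}\) bound on \(\psi_N\) over the annulus \(\cA_N\) from \cref{theo:Fourier-Master}, applied to \(\psi\in C^{\varkappa+1,\alpha}\) with at least one derivative available. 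By \cref{coro:uni-conv-Fourier} applied to \(\psi'\), we have \(\psi_N'\to\psi'\) uniformly on \(\cA_N\). Hence
\[
\psi_N(r_N\zeta)=\psi(\zeta)-\delta_N\,\zeta\psi'(\zeta)+o(\delta_N).
\]
Since \(\zeta\psi'(\zeta)\) points in the outward normal direction of \(\partial\mathscr D\) at \(\psi(\zeta)\) (because \(\psi\) maps \(\mathbb D^e\) to \(\mathscr D^e\)), the point \(\psi_N(r_N\zeta)\) lies inside \(\mathscr D\) at normal depth approximately \(\delta_N\abs{\psi'(\zeta)}\). Moreover \(\abs{\psi'(\zeta)}\geq 1/\norm{\phi'}_{L^\infty(\partial\mathscr D)}\), because \(\psi'(\zeta)=1/\phi'(\psi(\zeta))\).

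The main obstacle is to convert this normal depth into a genuine lower bound on the distance to \(\mathscr D^c\), uniformly in \(\zeta\). I would use the \(C^{1,\alpha}\) regularity of \(\partial\mathscr D\): there are uniform interior tangent cones, or more precisely the estimate \(\dist(w-t n(w),\partial\mathscr D)\ge t(1-\mathcal O(t^{\alpha}))\) for \(w\in\partial\mathscr D\), unit inner normal \(n(w)\), and small \(t\). This is proved by comparing \(\partial\mathscr D\) near \(w\) with its tangent line, using that the tangent direction is \(C^{0,\alpha}\), so the boundary deviates from the tangent line by \(\mathcal O(\abs{s}^{1+\alpha})\) at arclength \(s\); points at distance about \(t\) are then only relevant for \(\abs{s}\lesssim t\). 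The tangential part of the \(o(\delta_N)\) error only moves the base point, and the normal part is absorbed. Together this gives
\[
\dist\bigl(\psi_N(r_N\zeta),\mathscr D^c\bigr)\ge \mfrac{A}{\norm{\phi'}_{L^\infty(\partial\mathscr D)}}\mfrac{\log N}{N}\bigl(1-o(1)\bigr).
\]
Since \(A>R\), this exceeds the threshold for all large \(N\), which proves (b) and hence the inclusion.
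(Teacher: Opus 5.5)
Your argument is correct, but it gets the key distance bound by a different route than the paper.

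The paper never uses the local geometry of \(\partial\mathscr D\). It takes a pair of points realizing the distance between the curves \(\psi_N(\mathbb T)\) and \(\psi_N(\mathbb T_{r_N})\) and notes that the connecting segment lies in \(\mathscr D^c_{N,r_N}\). On that set the maximum principle, together with uniform convergence of \(\psi_N'\) on \(\mathbb T_{r_N}\), gives \(\abs{\phi_N'}<(1+\varepsilon)\norm{\phi'}_{L^\infty(\partial\mathscr D)}\). Integrating \(\phi_N'\) along the segment then yields \(1-r_N\le(1+\varepsilon)\norm{\phi'}_{L^\infty(\partial\mathscr D)}\dist\bigl(\psi_N(\mathbb T),\psi_N(\mathbb T_{r_N})\bigr)\). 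Replacing \(\psi_N(\mathbb T)\) by \(\partial\mathscr D\) costs only \(\mathcal O(N^{-\varkappa-1-\beta})\). This Lipschitz argument is short and needs no Taylor remainder and no normal-depth lemma. However, it leaves implicit why \(\psi_N(\mathbb T_{r_N})\) lies inside \(\mathscr D\) and how a distance bound yields the inclusion.

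Your route costs more. You need the uniform \(C^{1,\beta}\) control of \(\psi_N\) on the annulus for the remainder, and an auxiliary geometric lemma for \(C^{1,\alpha}\) curves. That lemma is true: localize to arclength \(\abs{s}\lesssim t\) via the chord-arc property of \(\partial\mathscr D\), then compare with the tangent line. In return, you locate each point \(\psi_N(r_N\zeta)\) inside \(\mathscr D\) at depth \(\delta_N\abs{\psi'(\zeta)}(1+o(1))\). Combined with your connectedness and unbounded-component argument via the injectivity lemma, this makes the topological conclusion fully explicit.

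Three small remarks:
\begin{itemize}
\item With \(n(w)\) the unit inner normal, the displaced point is \(w+tn(w)\), not \(w-tn(w)\).
\item No tangential/normal splitting of the \(o(\delta_N)\) error is needed. The function \(\dist(\cdot,\mathscr D^c)\) is \(1\)-Lipschitz, and the perturbation is smaller than the depth, so it cannot leave \(\mathscr D\) either.
\item Connectedness is needed for the left-hand set itself, not just \(\mathscr D^c\). It follows by joining each of its points in \(\mathscr D\) to a nearest boundary point by a segment, which stays in the set.
\end{itemize}
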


\begin{proof}
Let \(\psi_N\) and \(\phi_N\) be as in \eqref{eq:def-psi_N} and \eqref{eq:def-phi_N}, respectively. We choose \(\eta_N,\zeta_N\in\mathbb T\) such that 
\[\min_{\eta,\zeta\in \mathbb T}  \abs{\psi_N(\eta)-\psi_N(r_N\zeta)}=\abs{\psi_N(\eta_N)-\psi_N(r_N\zeta_N)}.\]
In other words, the points \(w_N=\psi_N(\eta_N)\) and \(z_N =  \psi_N(r_N\zeta_N)\) minimize the distance between the two curves \(\Gamma_{N,1}=\psi_N(\mathbb T)\) and \(\Gamma_{N,r_N}=\psi_N(\mathbb T_{r_N})\). Let \(\gamma_N\) be the line segment connecting \(z_N\) and \(w_N\), then \(\gamma_N\subset \mathscr D_{N,r_N}^c\).
Employing the maximum principle and \cref{coro:uni-conv-Fourier} we notice that for large \(N\)
\begin{align*}
\max_{z\in\mathscr D_{N,r_N}^c}\abs{\phi_N'(z)}&=\biggl (\min_{\abs{z}=r_N}\abs{\psi_N'(z)}\biggr)^{-1}\\
&=\biggl (\min_{\abs{z}=1}\abs{\psi_N'(z)}\biggr)^{-1}+\mathcal O(N^{-\beta})\\
&< (1+\varepsilon)\norm{\phi'}_{L^\infty(\partial \mathscr D)},
\end{align*}
where \(\varepsilon>0\) is arbitrary.
Hence we have
\begin{align*}
1-r_N\leq\abs{r_N\zeta_N-\eta_N}&=\abs{\phi_N(z_N)-\phi_N(w_N)}\\
&\leq \int_{\gamma_N} \abs{\phi_N'(s)}\abs{\d s}\\
&< (1+\varepsilon)\norm{\phi'}_{L^\infty(\partial \mathscr D)}\abs{z_N-w_N}.
\end{align*}
Using the classical estimate \(1-e^{-x}\geq x-x^2/2\) for \(x\geq 0\) we get for large \(N\)
\[1-r_N\geq \frac{A\log N}{N}-\frac{(1+\varepsilon) \norm{\phi'}_{L^\infty(\partial \mathscr D)}}{N},\]
and subsequently
\[\frac{A/R}{1+\varepsilon}\frac{R \log N}{\norm{\phi'}_{L^\infty(\partial \mathscr D)} N}- N^{-1}< \min_{\eta,\zeta\in \mathbb T}  \abs{\psi_N(\eta)-\psi_N(r_N\zeta)}.\]
As \(\varepsilon\) was chosen arbitrarily, we may assume it to satisfy \(1+\varepsilon<A/R\).
Using the estimates above together with \cref{theo:Bernstein0} we obtain for sufficiently large \(N\)
\begin{align*}
\min_{\eta,\zeta\in \mathbb T}\abs{\psi_N(r_N\zeta)-\psi(\eta)}
&\geq \min_{\eta,\zeta\in\mathbb T} \abs{\psi_N(r_N\zeta)-\psi_N(\eta)}-\abs{\psi_N(\eta)-\psi(\eta)}\\
&> \frac{A/R}{1+\varepsilon}\frac{R \log N}{\norm{\phi'}_{L^\infty(\partial \mathscr D)} N}-2N^{-1}\\
&\geq \frac{R\log N}{\norm{\phi'}_{L^\infty(\partial \mathscr D)} N}.
\end{align*}
Therefore, the inclusion \eqref{eq:lem-min-curve} is established.
\end{proof}

\begin{theorem}\label{theo:uni-approx-phi}
Let \(\varkappa\in \mathbb Z_{\geq 0}\), \(\alpha\in(0,1)\) and \(\mathscr D\) be a \(C^{\varkappa+1,\alpha}\)-smooth Jordan domain and  \(\phi_N\coloneqq (\mathcal F_{L_N}[\phi^{-1}])^{-1}\). Then, for large \(N\), \(\phi_N\) is well-defined on \(\mathscr D^c\) and uniformly on \(\mathscr D^c\) it holds \(\phi_N-\phi=\mathcal O(N^{-\varkappa-1-\beta})\) for any \(\beta\in(0,\alpha)\). 
\end{theorem}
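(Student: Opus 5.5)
Well-definedness comes straight from \cref{lem:min-curve}. Fix some $R>0$ and $A>R$. For large $N$, \cref{lem:min-curve} gives $\mathscr D^c\subset\mathscr D^c_{N,r_N}$, and by \cref{theo:conf-inverse} the map $\psi_N$ is a conformal bijection from $\mathbb D^e_{r_N}$ onto $\mathscr D^e_{N,r_N}$. Hence $\phi_N=\psi_N^{-1}$ is defined and holomorphic on a neighborhood of $\mathscr D^c$.

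For the estimate, I compare the two maps through their inverses. Let $z\in\mathscr D^c$ and put $w=\phi(z)\in\mathbb D^c$. Then
\[z=\psi(w)=\psi_N(w)+\bigl(\psi(w)-\psi_N(w)\bigr),\]
and \cref{theo:conf-inverse} bounds the bracket by $\mathcal O(N^{-\varkappa-1-\beta})$, uniformly in $w\in\mathbb D^c$. Both $w$ and $\phi_N(z)$ lie in $\mathbb D^c_{r_N}$, so
\[\phi(z)-\phi_N(z)=\phi_N(\psi_N(w))-\phi_N(z).\]
I then integrate $\phi_N'$ along the straight segment from $z$ to $\psi_N(w)$. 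Its length is $\mathcal O(N^{-\varkappa-1-\beta})$, which is much smaller than $\tfrac{\log N}{N}$. So once $N$ is large, the segment lies in the region $\{\dist(\cdot,\mathscr D^c)\le \tfrac{R\log N}{\norm{\phi'}N}\}$, and that region sits inside $\mathscr D^c_{N,r_N}$ by \cref{lem:min-curve}.

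On $\mathscr D^c_{N,r_N}$ the derivative $\phi_N'$ is bounded uniformly in $N$. This was shown in the proof of \cref{lem:min-curve}: by the maximum principle (including the point at infinity, where $1/\psi_N'$ is holomorphic), $\max\tabs{\phi_N'}=(\min_{\tabs{\zeta}=r_N}\tabs{\psi_N'(\zeta)})^{-1}$. The convergence of $\psi_N'(r_N\cdot)\to\psi'$ from \cref{coro:uni-conv-Fourier}, together with $\psi'\neq0$ on $\mathbb T$, makes this quantity bounded. Therefore
\[\abs{\phi(z)-\phi_N(z)}\le \norm{\phi_N'}_{L^\infty(\mathscr D^c_{N,r_N})}\,\abs{\psi_N(w)-z}=\mathcal O(N^{-\varkappa-1-\beta}),\]
uniformly in $z$. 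For unbounded $z$ the argument is unchanged, since the bounds used are global on $\mathbb D^c$.

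The main obstacle I expect is making sure the integration segment really stays inside the domain of $\phi_N$ near the boundary. The point $\psi_N(w)$ can lie slightly inside $\mathscr D$, so I need a quantitative neighborhood of $\mathscr D^c$ contained in $\mathscr D^c_{N,r_N}$. This is exactly what \cref{lem:min-curve} supplies. The only other care needed is keeping the bound on $\phi_N'$ uniform, which requires $\psi_N'$ to stay bounded away from zero on $\mathbb T_{r_N}$ uniformly in $N$.
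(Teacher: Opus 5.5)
Your argument is correct, but it reaches the estimate by a different route from the paper's.

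The paper first notes $\phi_N'(\infty)=\phi'(\infty)$, so that $\phi_N-\phi$ is bounded at infinity. It then uses the maximum principle to reduce to $z\in\partial\mathscr D$. For $z=\psi(\zeta)$ with $\zeta\in\mathbb T$, it applies Koebe's quarter theorem to $\psi_N$ on the small disk $\mathbb D(\zeta,\rho_N)\subset\mathbb D^e_{r_N}$, with $\rho_N=8\varepsilon_N/c$, using only the lower bound $|\psi_N'|>c$ on $\mathbb T$. The covering statement places $z$ in $\psi_N(\mathbb D(\zeta,\rho_N))$, so injectivity of $\psi_N$ gives $|\phi_N(z)-\phi(z)|\le\rho_N$.

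You instead treat $\phi_N$ as a Lipschitz map. Your two ingredients are the uniform bound on $\phi_N'$ over $\mathscr D^c_{N,r_N}$, taken from the proof of \cref{lem:min-curve}, and the quantitative neighborhood inclusion in \cref{lem:min-curve}. The latter guarantees that the short segment from $z$ to $\psi_N(\phi(z))$ stays inside the domain of $\phi_N$.

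What each approach buys:
\begin{itemize}
\item Your route is more elementary. It needs neither Koebe's theorem nor the reduction to $\partial\mathscr D$, and it treats all of $\mathscr D^c$ at once.
\item It does, however, use the full strength of \cref{lem:min-curve}: the inclusion of an entire $\frac{R\log N}{N}$-neighborhood of $\mathscr D^c$, not merely $\mathscr D^c\subset\mathscr D^c_{N,r_N}$.
\item The paper's argument is local to $\mathbb T$. Beyond well-definedness, it needs only univalence of $\psi_N$ on small disks near $\mathbb T$ and the lower bound on $|\psi_N'|$ there. The existence of a $\psi_N$-preimage of $z$ close to $\phi(z)$ comes directly out of the covering statement.
\end{itemize}
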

\begin{proof}
From \cref{lem:min-curve} we get \(\mathscr D^c \subset \mathscr D_{N,r_N}^c\) for \(r_N= e^{\frac{-\log N}{N}}\) and large \(N\), therefore \(\phi_N\) is well-defined on \(\mathscr D^c\).  We notice that \(\phi_N'(\infty)=\phi'(\infty)\), which follows from \(\psi_N'(\infty)=\psi'(\infty)\) together with \(\phi_N'(\infty)=\psi_N'(\infty)^{-1}\) and \(\phi'(\infty) = \psi'(\infty)^{-1}\). Therefore, \(\phi_N-\phi\) is bounded at infinity and due to the maximum principle it is sufficient to establish the uniform estimate only for boundary values.

Let \(z\in \partial \mathscr D\), then there exists a unique \(\zeta\in\mathbb T\) such that \(\psi(\zeta) = z\). Let \(\mathbb D(c,r)\) denote the disk with center \(c\) and radius \(r\). For \(\rho<1-r_N\) and sufficiently large \(N\), Koebe's quarter theorem yields
\[\mathbb D\left(\psi_N(\zeta),\tfrac{1}{4}\abs{\psi_N'(\zeta)}\rho\right)\subset \psi_N(\mathbb D(\zeta,\rho)).\]
As before, there exists a constant \(c>0\) such that \(c<\abs{\psi'_N(\eta)}\) for all \(\eta\in\mathbb T\) and \(N\) large.  Then we set
\[\varepsilon_N = \max_{\zeta\in\mathbb T}\abs{\psi(\zeta)-\psi_N(\zeta)}\]
and \(\rho_N=\frac{8 \varepsilon_N}{c}\). Using \cref{theo:Bernstein0} we have \(\varepsilon_N=\mathcal O(N^{-\varkappa-1-\beta})\) for every \(\beta\in(0,\alpha)\). We observe for large \(N\)
\[z\in \mathbb D(\psi_N(\zeta),2\varepsilon_N)=\mathbb D(\psi_N(\zeta),\tfrac 14c\rho_N)\subset \mathbb D(\psi_N(\zeta),\tfrac{1}{4}\abs{\psi_N'(\zeta)}\rho_N)\subset \psi_N(\mathbb D(\zeta,\rho_N)),\]
where we have used that \(\abs{z-\psi_N(\zeta)}=\abs{\psi(\zeta)-\psi_N(\zeta)}\leq \varepsilon_N\) as well as \(\rho_N<1-r_N\) for large \(N\). We can now find the unique value \(\zeta_N\in \mathbb D(\zeta,\rho_N)\) such that \(\psi_N(\zeta_N) = z\) and  obtain
\[\abs{\phi_N(z)-\phi(z)}=\abs{\zeta_N-\zeta}\leq \rho_N=\frac{8\varepsilon_N}{c}=\mathcal O(N^{-\varkappa-1-\beta}),\]
which yields the wanted result.
\end{proof}

\subsection{Admissible Laurent Families}\label{sec:AL}
In the following we are interested in functions of the form
\begin{equation}\label{eq:AL0}
f_{N,t}(\zeta)=\sum_{\abs{k}\leq \ell L_N} a_{N,t,k}\zeta^k,\qquad L_N= \Bigl \lfloor \mfrac{N}{\log N}\Bigr\rfloor,
\end{equation}
where \(\ell \in \mathbb N\) and the coefficients \(a_{N,t,k}\) are \(C^{\varkappa,\alpha}\)-smooth for \(\abs{t}< \delta_N=\frac{A\log N}{N}\) for any \(A>0\) and large \(N\). Similarly to \(\mathcal F_N[f]\) we can view \(f_{N,t}\) as holomorphic functions away from zero or infinity. Clearly, if the Fourier coefficients \(a_{N,t,k}\) vanish for all non-positive or non-negative \(k\), then the definition remains valid at zero or infinity, respectively. In the same way, it is reasonable to consider the derivative of \(f_{N,t}\) as
\[f'_{N,t}(z) = \sum_{\abs{k}\leq \ell L_N}ka_{N,t,k} z^{k-1},\quad z\neq 0,\]
when \(f_{N,t}\) is regarded as a Laurent polynomial.

In \cref{sec:asymp-flow-decomp}, we will work with sequences \(f_{N,t,j}\) of Laurent polynomials of the form \eqref{eq:AL0}. These families are constructed iteratively, with each new term depending on the families already obtained. The recursion involves differentiation, the Herglotz transform, multiplication, and finite Taylor expansions. We therefore need a class of Laurent polynomials which is stable under these operations and which preserves the estimates required at each step.

In spirit, we want these families to behave like \(f_{N,t}(\zeta)=\mathcal F_{L_N}[f(e^{-t}\cdot)](\zeta)\), where \(f\) is a \(C^{\varkappa,\alpha}\)-smooth function on an annular neighborhood of \(\mathbb T\). By \cref{theo:Bernstein0} and \cref{coro:uni-conv-Fourier}, for every \(\beta\in(0,\alpha)\) and every \(A>0\), one has
\[\norm{f_{N,0}-f}_{L^\infty(\mathbb T)}=\mathcal O(N^{-\varkappa-\beta})\qquad\text{and}\qquad\sup_{\abs t\leq\delta_N}\norm{f_{N,t}-f}_{L^\infty(\mathbb T)}=\mathcal O(N^{-\beta}).\]
The same behavior persists after taking the \(t\)-derivative, with the expected loss of order.

\begin{definition}\label{def:AL-Families}
Let \(f_{N,t}\) be a family of Laurent polynomials of the form \eqref{eq:AL0} such that  for every \(A>0\) and large \(N\) the coefficients \(a_{N,t,k}\) are \(C^{\varkappa,\alpha}\)-smooth for \(\abs{t}< \delta_N=\tfrac{A\log N}{N}\). 
For every \(s=0,\dots,\varkappa\), we define  \(f_{N,t}^{[s]}\coloneqq \partial_t^s f_{N,t}\). We call \(f_{N,t}\) an  \textit{\((\varkappa,\alpha)\)-admissible Laurent family}, or for short \(f_{N,t}\in \sL^{\varkappa,\alpha}\), if there exist limiting  functions \(f^{[s]}\in C^{\varkappa-s,\alpha}(\mathbb T)\) such that for any \(\beta\in(0,\alpha)\) and \(A>0\)
\begin{equation}\label{eq:AL1}
\norm{f^{[s]}_{N,0}-f^{[s]}}_{L^\infty(\mathbb T)}=\mathcal O(N^{s-\varkappa-\beta})
\end{equation}
as well as
\begin{equation}\label{eq:AL2}
\sup_{\abs{t}\leq \delta_N} \norm{f^{[s]}_{N,t}-f^{[s]}}_{L^\infty(\mathbb T)}=\mathcal O(N^{-\beta}).
\end{equation}
\end{definition}

Before turning to the main properties of this class, we record a few examples. If \(f\in C^{\varkappa,\alpha}(U)\) for some neighborhood \(U\) of \(\mathbb T\), then both \(\mathcal F_{L_N}[f](e^{-t}\cdot)\) and \(\mathcal F_{L_N}[f(e^{-t}\cdot)]\) belong to \(\sL^{\varkappa,\alpha}\). The class also contains simple families which do not arise from the Fourier truncation, for instance \(f_{N,t}(\zeta)=t\zeta^k\), with \(k\in\mathbb Z\).

We next collect some elementary stability properties of \(\sL^{\varkappa,\alpha}\). If \(f_{N,t},g_{N,t}\in\sL^{\varkappa,\alpha}\), then \(f_{N,t}+g_{N,t}\in\sL^{\varkappa,\alpha}\) and \(f_{N,t}\cdot g_{N,t}\in\sL^{\varkappa,\alpha}\), where the latter one follows from the Leibniz rule. The class is also closed under conjugation. More precisely, \(\overline{f_{N,t}}\in\sL^{\varkappa,\alpha}\), and therefore \(\Re f_{N,t},\Im f_{N,t}\in\sL^{\varkappa,\alpha}\).
When the Laurent polynomials are viewed as holomorphic functions away from the unit circle, this conjugation should be interpreted through reflection. Indeed, the function \(\overline{f_{N,t}(z)}\) is not holomorphic in \(z\). Instead one considers \(\overline{f_{N,t}(\overline z^{-1})}\), which is again a Laurent polynomial and agrees with \(\overline{f_{N,t}}\) on \(\mathbb T\).
Thus, in the sense of \cref{def:AL-Families} we regard the conjugation as complex reflection. It is  natural to ask whether more complicated operations also preserve the class \(\sL^{\varkappa,\alpha}\).

\begin{lemma}\label{lem:AL-stable}
If \(f_{N,t}\in \sL^{\varkappa,\alpha}\), then its Herglotz transform \(\Herg[f_{N,t}](\zeta)\) as well as \(f_{N,0}(e^{-t}\zeta)\) belong to \(\sL^{\varkappa,\alpha}\). If further \(\varkappa\geq 1\), then \(f'_{N,t},\partial_t f_{N,t}\in \sL^{\varkappa-1,\alpha}\).
\end{lemma}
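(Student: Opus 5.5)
Each of the four operations will be treated separately; in each case the limiting functions will be written down explicitly and conditions \eqref{eq:AL1} and \eqref{eq:AL2} will be checked for every $s$.

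\emph{Herglotz transform.} The operator $\Herg$ acts on coefficients and does not involve $t$, so it commutes with $\partial_t$: we have $\Herg[f_{N,t}]^{[s]}=\Herg[f^{[s]}_{N,t}]$, and this is again a Laurent polynomial of degree at most $\ell L_N$. I take the candidate limits $\Herg[f^{[s]}]$. These lie in $C^{\varkappa-s,\alpha}(\mathbb T)$ by \eqref{eq:herglotz-estimate0}. Since $\Herg$ is linear, the bound \eqref{eq:herglotz-estimate1} cannot be applied directly to $f^{[s]}_{N,t}-f^{[s]}$, because $f^{[s]}$ is not a Laurent polynomial. I will therefore insert the truncation $\mathcal F_{L'}[f^{[s]}]$ with $L'=\ell L_N$ and split
\[
\Herg[f^{[s]}_{N,t}]-\Herg[f^{[s]}]=\Herg\bigl[f^{[s]}_{N,t}-\mathcal F_{L'}[f^{[s]}]\bigr]+\bigl(\mathcal F_{L'}[\Herg[f^{[s]}]]-\Herg[f^{[s]}]\bigr),
\]
using that $\Herg$ commutes with $\mathcal F_{L'}$. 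The second term is $\mathcal O(N^{s-\varkappa-\beta})$ uniformly on $\mathbb T$ by \cref{theo:Bernstein0} applied to $\Herg[f^{[s]}]\in C^{\varkappa-s,\alpha}$. The first term is a Laurent polynomial of degree $\mathcal O(L_N)$, so \eqref{eq:herglotz-estimate1} bounds it by $\mathcal O(\log N)$ times $\tnorm{f^{[s]}_{N,t}-f^{[s]}}_{L^\infty(\mathbb T)}+\tnorm{f^{[s]}-\mathcal F_{L'}[f^{[s]}]}_{L^\infty(\mathbb T)}$. The factor $\log N$ is absorbed by choosing an intermediate exponent $\beta'\in(\beta,\alpha)$. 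This gives both \eqref{eq:AL1} and \eqref{eq:AL2}.

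\emph{Dilation $g_{N,t}(\zeta)=f_{N,0}(e^{-t}\zeta)$.} Differentiating $s$ times in $t$ gives $g^{[s]}_{N,t}=(-1)^s(\zeta\partial_\zeta)^s f_{N,0}(e^{-t}\zeta)$. On the unit circle $\zeta\partial_\zeta=-iD_\tau$, so the natural candidate limit is $g^{[s]}=(iD_\tau)^s f^{[0]}$ evaluated at $\zeta$ (up to the sign convention just fixed), which lies in $C^{\varkappa-s,\alpha}$.

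For \eqref{eq:AL1} at $t=0$, I need $\tnorm{D_\tau^s f_{N,0}-D_\tau^s f^{[0]}}_{L^\infty(\mathbb T)}=\mathcal O(N^{s-\varkappa-\beta})$. This is exactly the computation in the proof of \cref{theo:Fourier-Master}: Bernstein's inequality on $f_{N,0}-\mathcal F_{L_N}[f^{[0]}]$ combined with \cref{theo:Bernstein0}. That theorem applies because $f_{N,0}$ has degree $\mathcal O(L_N)$ and approximates $f^{[0]}$ at rate $N^{-\varkappa-\gamma}$.

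For \eqref{eq:AL2}, \cref{theo:Fourier-Master} also gives $\tnorm{f_{N,0}}_{C^{\varkappa,\beta}(\cA_N)}=\mathcal O(1)$. Hence $D_\tau^s f_{N,0}$ is uniformly $C^{\varkappa-s,\beta}$ on $\cA_N$. Moving from radius $1$ to radius $e^{-t}$ therefore costs $\mathcal O(\delta_N^{\beta})$ when $s=\varkappa$, and $\mathcal O(\delta_N)$ when $s<\varkappa$. This is the same argument as \cref{coro:uni-conv-Fourier}, and both bounds are $\mathcal O(N^{-\beta'})$ for every $\beta'<\beta$. Since $\beta<\alpha$ was arbitrary, this suffices.

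\emph{Derivatives, $\varkappa\ge1$.} The case $\partial_t f_{N,t}$ is immediate: its $s$-th $t$-derivative is $f^{[s+1]}_{N,t}$. Taking limits $f^{[s+1]}\in C^{(\varkappa-1)-s,\alpha}$, conditions \eqref{eq:AL1} and \eqref{eq:AL2} for index $s$ in the class $\sL^{\varkappa-1,\alpha}$ are just the original conditions for index $s+1$. The coefficients also have the required smoothness.

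For $f'_{N,t}$, note that $\partial_t$ commutes with $\partial_\zeta$, and on $\mathbb T$ we have $f'=-i\bar\zeta D_\tau f$. I take limits $-i\bar\zeta D_\tau f^{[s]}\in C^{\varkappa-1-s,\alpha}$. Condition \eqref{eq:AL1} then follows from the Bernstein argument of the previous paragraph, applied to $f^{[s]}_{N,0}$ in place of $f_{N,0}$ with the exponent shifted by one.

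The main obstacle is \eqref{eq:AL2} for $f'_{N,t}$ when $|t|>0$. The hypothesis only gives sup-norm control of $f^{[s]}_{N,t}-f^{[s]}$ of order $N^{-\beta}$. Bernstein's inequality would multiply this by $L_N$, which is far too large, so one derivative cannot be gained this way. My plan is to avoid this by using Taylor's formula in $t$:
\[
f_{N,t}=\sum_{r<\varkappa-s}\frac{t^r}{r!}f^{[r]}_{N,0}+\mathcal O\Bigl(|t|^{\varkappa-s}\sup_{|u|\le\delta_N}\bigl|f^{[\varkappa-s]}_{N,u}\bigr|\Bigr).
\]
The terms at $t=0$ are handled by the sharp rates \eqref{eq:AL1}, where Bernstein's inequality costs only a factor $N^{1}$ against a gain of $N^{r-\varkappa-\beta}$. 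The remainder is differentiated using Bernstein's inequality again, and then $|t|^{\varkappa-s}L_N\cdot\mathcal O(1)$ is small provided $\varkappa-s\ge1$, which holds in the relevant range $s\le\varkappa-1$. If this bookkeeping fails at the top index, I would fall back on an additional uniform Hölder bound on the annulus for $f^{[s]}_{N,t}$, in the same way that \cref{theo:Fourier-Master} supplies one for the dilation case.
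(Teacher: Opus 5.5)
Your treatment of the Herglotz transform, of the dilation $f_{N,0}(e^{-t}\zeta)$, of $\partial_t f_{N,t}$, and of \eqref{eq:AL1} for $f'_{N,t}$ is fine. In the dilation case, applying \cref{theo:Fourier-Master} directly to $f_{N,0}$ is a harmless variant of the paper's comparison with $\mathcal F_{L_N}[h^{[s]}](e^{-t}\cdot)$.

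The gap is \eqref{eq:AL2} for $f'_{N,t}$ at the top index $s=\varkappa-1$, which for $\varkappa=1$ is the whole claim. Your Taylor remainder is bounded via Bernstein by $\ell L_N\,\abs{t}^{\varkappa-s}\sup_u\tnorm{f^{[\varkappa]}_{N,u}}_{L^\infty(\mathbb T)}$. When $\varkappa-s=1$ this is of order $L_N\delta_N\asymp A$: bounded, but not small, let alone $\mathcal O(N^{-\beta})$. So your claim that it is small whenever $\varkappa-s\ge 1$ is false; the bookkeeping only works for $s\le\varkappa-2$.

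The fallback does not repair this as stated. \cref{theo:Fourier-Master} cannot give a uniform $C^{1,\beta}$-type bound for $f^{[s]}_{N,t}$ at $t\neq 0$. Its hypothesis needs the sharp rate $N^{s-\varkappa-\gamma}$, while \eqref{eq:AL2} only provides $N^{-\beta}$ away from $t=0$. The analogy with the dilation case, where the sharp rate at $t=0$ was available, breaks down here. The weak rate gives at most a uniform $C^{0,\beta}(\mathbb T)$ bound, and that is useless for a $\zeta$-derivative unless combined with a factor $\delta_N$ coming from integration in $t$.

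The missing idea is to improve the crude bound $\tnorm{D_\tau f^{[s+1]}_{N,u}}_{L^\infty(\mathbb T)}=\mathcal O(L_N)$ to $\mathcal O(L_N N^{-\beta})$. Split $f^{[s+1]}_{N,u}=\bigl(f^{[s+1]}_{N,u}-\mathcal F_{L_N}[f^{[s+1]}]\bigr)+\mathcal F_{L_N}[f^{[s+1]}]$.

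\begin{itemize}
\item The first piece has sup norm $\mathcal O(N^{-\beta})$ by \eqref{eq:AL2} (index $s+1\le\varkappa$) and \cref{theo:Bernstein0}. So \eqref{eq:bernstein-ineq1} gives $\mathcal O(L_N N^{-\beta})$.
\item For the second piece, the H\"older Bernstein inequality \eqref{eq:bernstein-ineq3} together with \eqref{eq:theo-Bernstein-uni2} gives $\mathcal O(L_N^{1-\gamma})=\mathcal O(L_NN^{-\beta})$ for $\gamma\in(\beta,\alpha)$. This uses only that the limit $f^{[s+1]}$ is H\"older.
\end{itemize}

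Then the first-order identity $f^{[s]}_{N,t}=f^{[s]}_{N,0}+\int_0^t f^{[s+1]}_{N,u}\,\d u$ yields
\[\sup_{\abs t\le\delta_N}\tnorm{p^{[s]}_{N,t}-p^{[s]}}_{L^\infty(\mathbb T)}\le\delta_N\cdot\mathcal O(L_NN^{-\beta})+\mathcal O(N^{s-\varkappa+1-\beta})=\mathcal O(N^{-\beta}),\]
since $\delta_NL_N\le A$. This is the paper's argument. It works for every $s\le\varkappa-1$ at once, so your higher-order Taylor expansion is unnecessary.
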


\begin{proof}
Let us first consider \(g_{N,t}= \Herg[f_{N,t}]\). We denote by \(f^{[s]}\)  the limiting functions of \(f_{N,t}\) according to \cref{def:AL-Families} and  set \(g^{[s]}= \Herg[f^{[s]}]\). Note that the Herglotz transform of \(f_{N,t}\) commutes with \(\partial _t\), hence \(g_{N,t}^{[s]} =\partial_t^s \Herg[f_{N,t}]=\Herg[f^{[s]}_{N,t}]\) and we observe
\begin{align*}
\norm{g^{[s]}_{N,t}-g^{[s]}}_{L^\infty(\mathbb T)}&=\norm{\Herg\big[f_{N,t}^{[s]}\big]-\Herg\big[f^{[s]}\big]}_{L^\infty(\mathbb T)}\\
&\leq  \norm{\Herg\big[f_{N,t}^{[s]}\big]-\Herg\big[\mathcal F_{L_N}\big[f^{[s]}\big]\big]}_{L^\infty(\mathbb T)}+ \norm{\Herg\big[\mathcal F_{L_N}\big[f^{[s]}\big]\big]-\Herg\big[f^{[s]}\big]}_{L^\infty(\mathbb T)}\\
&\eqqcolon T_1+T_2.
\end{align*}
For the first term \(T_1\), we set \(M_N\coloneqq \max(\deg f_{N,t},L_N)\) and employ \eqref{eq:herglotz-estimate1} to obtain for some \(c>0\) and any \(\beta\in(0,\alpha)\)
\begin{align*}
T_1&\leq c\log(M_N+2) \norm{f_{N,t}^{[s]}-\mathcal F_{L_N}\big[f^{[s]}\big]}_{L^\infty(\mathbb T)}\\
&\leq c\log(M_N+2) \norm{f_{N,t}^{[s]}-f^{[s]}}_{L^\infty(\mathbb T)}+c\log(M_N+2) \norm{f^{[s]}-\mathcal F_{L_N}\big[f^{[s]}\big]}_{L^\infty(\mathbb T)}\\
&=\mathcal O\left( \log N \norm{f_{N,t}^{[s]}-f^{[s]}}_{L^\infty(\mathbb T)}+ N^{s-\varkappa-\beta}\right),
\end{align*}
where we have used \(\log(M_N+2)=\mathcal O(\log N)\). For the second term we exploit that the Herglotz transform and Fourier truncation commute to achieve
\[T_2=  \norm{\mathcal F_{L_N}\big[\Herg\big[f^{[s]}\big]\big]-\Herg\big[f^{[s]}\big]}_{L^\infty(\mathbb T)}=\mathcal O(N^{s-\varkappa-\beta}).\]
By combining the estimates for \(T_1\) and \(T_2\) we establish \eqref{eq:AL1} and \eqref{eq:AL2} for \(g_{N,t}\), as for some constant \(c'>0\) we have
\[\norm{g^{[s]}_{N,t}-g^{[s]}}_{L^\infty(\mathbb T)}\leq c' \log N \norm{f_{N,t}^{[s]}-f^{[s]}}_{L^\infty(\mathbb T)}+\mathcal O(N^{s-\varkappa-\beta}).\]
This completes the proof of \(g_{N,t}\in \sL^{\varkappa,\alpha}\).

Let us turn to \(h_{N,t}= f_{N,0}(e^{-t}\zeta)\). Then \(h_{N,t}\) is again a Laurent polynomial of degree \(\mathcal O(L_N)\), while the coefficients are sufficiently smooth. Further, we observe that \(\partial_th_{N,t}(\zeta) = i D_\tau f_{N,0}(e^{-t}\zeta)\), hence for \(s\leq \varkappa\) one has
\[h_{N,t}^{[s]}(\zeta)=\partial_t^s h_{N,t}(\zeta)=i^s\bigl(D_\tau^{s}f_{N,0}\bigr)(e^{-t}\zeta).\]
We identify the limiting functions \(h^{[s]}= i^s D_\tau^{s}f\in C^{\varkappa-s,\alpha}(\mathbb T)\), where \(f= f^{[0]}\in C^{\varkappa,\alpha}(\mathbb T)\) denotes the limiting function of \(f_{N,t}\) according to \cref{def:AL-Families}. We recall that \(D_\tau\) and \(\mathcal F_{L_N}\) commute. We consider for \(\beta\in (0,\alpha)\)
\begin{align*}
\norm{h_{N,t}^{[s]}-h^{[s]}}_{L^\infty(\mathbb T)}&\leq \norm{h_{N,t}^{[s]}- \mathcal F_{L_N}\big[h^{[s]} \big](e^{-t}\cdot)}_{L^\infty(\mathbb T)}+\norm{ \mathcal F_{L_N}\big[h^{[s]} \big](e^{-t}\cdot)-h^{[s]}}_{L^\infty(\mathbb T)}\\
&\hspace{-1cm}= \norm{D_\tau^s\big[f_{N,0}(e^{-t}\cdot)-\mathcal F_{L_N}[f] (e^{-t}\cdot)\big]}_{L^\infty(\mathbb T)}+\norm{ \mathcal F_{L_N}\big[h^{[s]} \big](e^{-t}\cdot)-h^{[s]}}_{L^\infty(\mathbb T)}\\
&\hspace{-1cm}\leq M_N^s \norm{f_{N,0}(e^{-t}\cdot)-\mathcal F_{L_N}[f](e^{-t}\cdot)}_{L^\infty(\mathbb T)}+\norm{ \mathcal F_{L_N}\big[h^{[s]} \big](e^{-t}\cdot)-h^{[s]}}_{L^\infty(\mathbb T)},
\end{align*} 
where we have used the Bernstein inequality \eqref{eq:bernstein-ineq1}. We observe \(M_N^s=\mathcal O(N^s)\), hence \eqref{eq:AL1} follows from \cref{theo:Bernstein0}, while \eqref{eq:AL2} is a consequence from \cref{theo:Fourier-Master} and \cref{coro:uni-conv-Fourier}. This proves that \(h_{N,t}\in \sL^{\varkappa,\alpha}\).

We assume now \(\varkappa\geq 1\). For the \(t\)-derivative, \cref{def:AL-Families} immediately ensures that  \(\partial _t f_{N,t}\in \sL^{\varkappa-1,\alpha}\). Lastly, we consider \(p_{N,t}=f_{N,t}'\), which again is a Laurent polynomial of degree \(\mathcal O(L_N)\), where  the coefficients have the required smoothness in \(t\).  Employing \eqref{eq:polar-to-wirtinger} we get for any \(s=0,\dots, \varkappa-1\)
\[
p_{N,t}^{[s]}(\zeta)=\partial_t^s f_{N,t}'(\zeta)=\big(f_{N,t}^{[s]}\big)'(\zeta)=- i\zeta^{-1} D_\tau f_{N,t}^{[s]}(\zeta).\]
Thus, we identify the limiting functions as \(p^{[s]}(\zeta)=-i\zeta^{-1} D_\tau f^{[s]}(\zeta)\in C^{\varkappa-1-s,\alpha}(\mathbb T)\). Then similarly as before we have for any \(\beta\in(0,\alpha)\)
\begin{align*}
\norm{p_{N,0}^{[s]}-p^{[s]}}_{L^\infty(\mathbb T)}&\leq M_N \norm{f^{[s]}_{N,0}-\mathcal F_{L_N}\big [f^{[s]}\big]}_{L^\infty(\mathbb T)}+\norm{\mathcal F_{L_N}\big [D_\tau f^{[s]}\big] -D_\tau f^{[s]}}_{L^\infty(\mathbb T)}\\
&=\mathcal O\bigl(N^{s-(\varkappa-1)-\beta}\bigr),
\end{align*}
which verifies \eqref{eq:AL1}. Now turning to the uniform estimate \eqref{eq:AL2} we consider
\[D_\tau f^{[s]}_{N,t}- D_\tau f^{[s]}= D_\tau f^{[s]}_{N,0}- D_\tau f^{[s]} + \int_0^t D_\tau f_{N,u}^{[s+1]}\,\d u.\]
Hence, using the \(t=0\) estimate established above we conclude
\[\sup_{\abs{t}\leq \delta_N}\norm{p_{N,t}^{[s]}- p^{[s]}}_{L^\infty(\mathbb T)}\leq  \mfrac{A\log N}{N} \sup_{\abs{u}\leq \delta _N}\norm{D_\tau f^{[s+1]}_{N,u}}_{L^\infty(\mathbb T)}+\mathcal O\bigl(N^{s-(\varkappa-1)-\beta}\bigr).\]
Using analogous tricks as before we can estimate the remaining norm. We have for \(\abs{u}\leq \delta_N\)
\begin{align*}
\norm{D_\tau f^{[s+1]}_{N,u}}_{L^\infty(\mathbb T)}&\leq \norm{D_\tau f^{[s+1]}_{N,u} - D_\tau \mathcal F_{L_N}\big[ f^{[s+1]}\big]}_{L^\infty(\mathbb T)}+\norm{D_\tau \mathcal F_{L_N}\big[f^{[s+1]}\big]}_{L^\infty(\mathbb T)}\\
&\leq M_N \norm{ f^{[s+1]}_{N,u} -  \mathcal F_{L_N}\big[ f^{[s+1]}\big]}_{L^\infty(\mathbb T)}+\norm{D_\tau \mathcal F_{L_N}\big[f^{[s+1]}\big]}_{L^\infty(\mathbb T)}\\
&=\norm{D_\tau \mathcal F_{L_N}\big[f^{[s+1]}\big]}_{L^\infty(\mathbb T)}+\mathcal O\bigl(L_N N^{-\beta}\bigr).
\end{align*}
For the remaining term we employ the Bernstein-type inequality \eqref{eq:bernstein-ineq3} and \cref{theo:Bernstein0} to obtain for any \(\gamma \in(\beta,\alpha)\)
\begin{align*}
\norm{D_\tau \mathcal F_{L_N}\big[f^{[s+1]}\big]}_{L^\infty(\mathbb T)}&\leq 2L_N^{1-\gamma} \norm{\mathcal F_{L_N}\big[f^{[s+1]}\big]}_{C^{0,\gamma}(\mathbb T)}\\
&\leq 2 c_{\alpha,\gamma} L_N^{1-\gamma} \norm{f^{[s+1]}}_{C^{0,\alpha}(\mathbb T)}=\mathcal O\bigl(L_N N^{-\beta}\bigr),
\end{align*}
where \(c_{\alpha,\gamma}\) is the constant implied by \cref{theo:Bernstein0}.
Combining previous estimates yields
\[\sup_{\abs{t}\leq \delta_N}\norm{p_{N,t}^{[s]}- p^{[s]}}_{L^\infty(\mathbb T)}=\mathcal O( N^{-\beta}),\]
which establishes \eqref{eq:AL2} and consequently \(p_{N,t}\in \sL^{\varkappa-1,\alpha}\).
\end{proof}

As already indicated, the iterative construction in
\cref{sec:asymp-flow-decomp} will produce families of Laurent polynomials \(f_{N,t,j}\in\sL^{\varkappa-j,\alpha}\). Each \(f_{N,t,j}\) is obtained from the previously constructed families \(f_{N,t,k}\), \(k<j\), together with certain input terms \(g_{N,t,j}\in\sL^{\varkappa-j,\alpha}\) determined by \(\omega\) and \(\mathscr D\). The stability properties discussed above ensure that the recursive operations preserve admissibility, while the following proposition provides the corresponding admissibility statement for the external input terms coming from \(\omega\) and the geometry of \(\mathscr D\).

\begin{proposition}\label{prop:u_Nt-is-AL}
Let \(\varkappa\in \mathbb Z_{\geq 0}\) and \(\alpha \in(0,1)\). Let \(\mathscr D\) be a \(C^{\varkappa+1,\alpha}\)-smooth Jordan domain and \(\omega\) be a non-negative \(C^{\varkappa,\alpha}\)-smooth function defined on a neighborhood of \(\overline{\mathscr D}\) such that \(\omega>0\) on \(\partial \mathscr D\). Then for \(m,n\in\mathbb Z_{\geq 0}\) and \(m+n\leq \varkappa\) the family
\[u_{N,t,m,n}\coloneqq \mathcal F_{L_N}\bigl[\partial^{m}\bar{\partial}^n[\log \omega\circ \phi^{-1}_{N}](e^{-t}\cdot)\bigr]\]
 belongs to \(\sL^{\varkappa-m-n,\alpha}\). Moreover, for every \(A>0\), we have 
\[\tnorm{ \log \omega \circ \phi_N^{-1}}_{C^{\varkappa,\alpha}(\mathcal A_N)}=\mathcal O(1)\]
for \(\mathcal A_N=\mathbb D_{1/r_N}\setminus \overline{\mathbb D_{r_N}}\) with  \(r_N = e^{-\frac{A\log N}{N}}\).
\end{proposition}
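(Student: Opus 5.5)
Note first that \(\phi_N^{-1}=\psi_N\), so the object is \(G_N\coloneqq\log\omega\circ\psi_N\), defined on a neighborhood of \(\mathbb T\). I would prove the Hölder bound before the admissibility statement, since the latter will follow from it.

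\emph{Well-definedness.} Choose \(A'>A\) and apply \cref{theo:conf-inverse} with \(A'\). Then \(\psi_N\) is conformal on \(\mathbb D^e_{e^{-A'\log N/N}}\). Moreover \(\psi_N=\mathcal F_{L_N}[\psi]\) satisfies \(\psi_N(z)-\psi(z/|z|)=\mathcal O(\log N/N)\) on \(\overline{\mathcal A_N}\). Indeed, \(\psi_N\) has bounded \(C^{1,\beta}\)-norm on \(\mathcal A_N\) by \cref{theo:Fourier-Master}, and it is close to \(\psi\) on \(\mathbb T\). Hence \(\psi_N(\mathcal A_N)\) lies in an \(\mathcal O(\log N/N)\)-neighborhood of \(\partial\mathscr D\). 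For large \(N\) this neighborhood is inside the fixed neighborhood \(U\) of \(\overline{\mathscr D}\) on which \(\omega\in C^{\varkappa,\alpha}\) and \(\omega\ge c>0\), the latter by continuity and positivity on \(\partial\mathscr D\). So \(\log\omega\in C^{\varkappa,\alpha}(U')\) on a fixed closed neighborhood \(U'\) of \(\partial\mathscr D\).

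\emph{Hölder bound.} By \cref{theo:Fourier-Master}, applied to \(f=\psi\in C^{\varkappa+1,\alpha}(\mathbb T)\) and \(f_N=\psi_N\), we get \(\tnorm{\psi_N}_{C^{\varkappa+1,\beta}(\mathcal A_N)}=\mathcal O(1)\) for every \(\beta<\alpha\). Since \(\psi_N\) is holomorphic, \(\partial^m\bar\partial^n(\log\omega\circ\psi_N)\) is computed by the holomorphic chain rule (Faà di Bruno). It is a finite sum of terms
\[
(\partial^a\bar\partial^b\log\omega)\circ\psi_N\cdot\prod \psi_N^{(i_k)}\prod\overline{\psi_N^{(j_l)}},
\]
with \(a+b\le m+n\) and all derivative orders at most \(\varkappa\). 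The factors \(\psi_N^{(i)}\) with \(i\le\varkappa+1\) are bounded in \(C^{0,\beta}\), and even Lipschitz when \(i\le\varkappa\). Composition of a \(C^{0,\alpha}\) function with a uniformly Lipschitz map stays in \(C^{0,\alpha}\) with controlled norm. Here \(\psi_N\) is uniformly Lipschitz because \(\psi_N'\) is bounded. The Banach algebra property then gives \(\tnorm{\log\omega\circ\psi_N}_{C^{\varkappa,\alpha}}=\mathcal O(1)\), with \(\alpha\) and not merely \(\beta\). The reason is that only \(\psi_N^{(i)}\) with \(i\le\varkappa\) appear in the \(\varkappa\)-th derivatives, and these are \(C^{1,\beta}\supset\) Lipschitz. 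The one exception is \(\varkappa=0\), where only \(\psi_N\) itself enters, and it is Lipschitz. This accounting of which derivatives of \(\psi_N\) appear, and at which Hölder exponent, is the step I expect needs the most care.

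\emph{Admissibility.} Set \(g_{m,n}\coloneqq\partial^m\bar\partial^n[\log\omega\circ\psi]\). Since \(\psi\) is only defined on \(\mathbb D^c\), I compare with \(G_N\) itself rather than a fixed function.

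The limiting functions are \(f^{[s]}=\partial_t^s|_{t=0}h(e^{-t}\cdot)\) with \(h=g_{m,n}\) on \(\mathbb T\), formed via one-sided radial derivatives from \(\mathbb D^c\). Equivalently, they are the combinations of \(D_\rho\) and \(D_\tau\) given by the chain rule, and they lie in \(C^{\varkappa-m-n-s,\alpha}(\mathbb T)\).

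Write \(h_N\coloneqq\partial^m\bar\partial^nG_N\), so the family is \(u_{N,t}=\mathcal F_{L_N}[h_N(e^{-t}\cdot)]\). Its \(t\)-derivatives are \(\mathcal F_{L_N}\) applied to radial derivatives \((-\rho D_\rho)^s h_N\) at radius \(e^{-t}\). These are bounded in \(C^{\varkappa-m-n-s,\beta}(\mathcal A_N)\) by the Hölder bound above.

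\eqref{eq:AL1} at \(t=0\): on \(\mathbb T\), split \(\mathcal F_{L_N}[\partial_t^s h_N]-f^{[s]}\) as \((\mathcal F_{L_N}-I)[f^{[s]}]+\mathcal F_{L_N}[\partial_t^sh_N-f^{[s]}]\). The first term is \(\mathcal O(N^{s+m+n-\varkappa-\beta})\) by \cref{theo:Bernstein0}. For the second, \(\partial_t^sh_N-f^{[s]}\) on \(\mathbb T\) is a chain-rule expression in \(\psi_N^{(i)}-\psi^{(i)}\) with \(i\le\varkappa+1\). By the argument in \cref{theo:Fourier-Master}, \(\tnorm{\psi_N^{(i)}-\psi^{(i)}}_{L^\infty(\mathbb T)}=\mathcal O(N^{i-\varkappa-1-\beta})\), and \(i\le m+n+s+1\), which gives the rate. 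The Fourier truncation costs only a \(\log N\) factor (Lebesgue constant), absorbed by shrinking \(\beta\).

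\eqref{eq:AL2}: combine the \(t=0\) estimate with the uniform \(C^{0,\beta}\) bound on \(\mathcal A_N\), as in \cref{coro:uni-conv-Fourier}. This gives an \(\mathcal O(N^{-\beta})\) deviation, using \(|e^{-t}-1|^\beta=\mathcal O(N^{-\beta'})\).
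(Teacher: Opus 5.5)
Your argument is correct and uses the same ingredients as the paper's proof: Faà di Bruno, \cref{theo:Fourier-Master} applied to \(\psi_N\), the \(\log N\) Lebesgue constant, and Hölder continuity of \(\partial^a\bar\partial^b\log\omega\) near \(\partial\mathscr D\). The only differences are that you prove the \(C^{\varkappa,\alpha}(\mathcal A_N)\) bound first and deduce \eqref{eq:AL2} from it, whereas the paper treats the \(t\)- and \(N\)-errors simultaneously, and that in \eqref{eq:AL1} you leave implicit the chain-rule terms \((\partial^a\bar\partial^b\log\omega)\circ\psi_N-(\partial^a\bar\partial^b\log\omega)\circ\psi\). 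These are bounded by Lipschitz continuity when \(a+b<\varkappa\), and by \(\alpha\)-Hölder continuity when \(a+b=\varkappa\), which gives \(\mathcal O(N^{-\alpha(\varkappa+1+\beta)})\) and is still sufficient.
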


\begin{proof}
Let us first justify that \(u_{N,t,m,n}\) is indeed well-defined. Since \(\omega>0\) on \(\partial\mathscr D\), we can find an open neighborhood \(U\) of \(\partial\mathscr D\) such that \(\omega>0\) on \(U\).  From \cref{coro:uni-conv-Fourier} we obtain that \(\phi_N^{-1}(e^{-t}\zeta)\) converges uniformly to \(\phi^{-1}\) on \(\mathbb T\) for \(\abs{t}\leq \tfrac{A\log N}{N}\) and any fixed \(A>0\). Therefore \(\phi_N^{-1}(e^{-t}\zeta)\in U\) for large \(N\) and \(\abs{t}\leq \tfrac{A\log N}{N}\). In other words, \(u_{N,t,m,n}\) is well-defined.

Next, we observe that \(u_{N,t,m,n}\) already admits the correct shape \eqref{eq:AL0}, including \(C^{\varkappa-m-n,\alpha}\)-smooth coefficients. Therefore we need to find the correct limiting functions \(u_{m,n}^{[s]}\) and establish \eqref{eq:AL1} together with \eqref{eq:AL2}.

In the following we will use the notation \(u=\log \omega\), \(\psi=\phi^{-1}\) and \(\psi_N = \phi^{-1}_N\). All computations below
are carried out for a fixed pair \((m,n)\). Accordingly, functions and constants
may depend on \(m\) and \(n\), even when this dependence is not indicated
explicitly.

We fix some \(s\leq \varkappa-m-n\) and aim to establish \eqref{eq:AL1} and \eqref{eq:AL2}.  For this, we need to transform \(u_{N,t,m,n}^{[s]}\) in a more suitable form. For some generic function \(f\colon U\to\mathbb R\) and \(z_t=e^{-t}\zeta\), the repeated use of \(\partial_t f(z_t)=-z_t(\partial f)(z_t)-\bar z_t(\bar \partial f)(z_t)\)  yields
\[
\partial_t^s[f(z_t)]= \sum_{p+q=0}^s a_{s,p,q}\, z_t^p\bar z_t^q\, \partial^p\bar \partial^qf(z_t),
\]
for some coefficients \(a_{s,p,q}\).  Since \(\partial_t\) and \(\mathcal F_{L_N}\) commute we have
\begin{align*}
u_{N,t,m,n}^{[s]}(\zeta)&=\partial_t^s[u_{N,t,m,n}](\zeta)\\
&=\mathcal F_{L_N}\left[\partial_t^s   \partial^m\bar\partial^n[u \circ \psi_N](e^{-t}\cdot)\right](\zeta)\\
&= \sum_{p+q=0}^s a_{s,p,q} e^{-(p+q)t}\mathcal F_{L_N}\left[\zeta^p\bar \zeta^q \,\partial^{m+p}\bar\partial^{n+q}[u \circ \psi_N](e^{-t}\cdot)  \right](\zeta)\\
&\eqqcolon \sum_{p+q=0}^s a_{s,p,q} \, v_{N,t,p,q}(\zeta).
\end{align*}
We set \(\sigma=\varkappa-m-n-s\) and  realize that it is sufficient to find limiting functions \(v_{p,q}\in C^{\sigma,\alpha}(\mathbb T)\) for \(v_{N,t,p,q}\) which satisfy \eqref{eq:AL1} and \eqref{eq:AL2} with the error rates \(\mathcal O(N^{-\sigma-\beta})\) and \(\mathcal O(N^{-\beta})\), respectively. For the limiting function we have the natural candidate 
\[v_{p,q}(\zeta)\coloneqq \zeta^p\bar \zeta^q \,\partial^{m+p}\bar\partial^{n+q}[u\circ \psi](\zeta).\]
Then \cref{theo:Bernstein0} yields  for \(\abs{t}\leq \tfrac{A\log N}{N}\)
\begin{equation}\label{eq:prf-u_nt-is-AL0.0}
\norm{v_{N,t,p,q}-v_{p,q}}_{L^\infty(\mathbb T)}\leq \norm{v_{N,t,p,q}-\mathcal F_{L_N}[v_{p,q}]}_{L^\infty(\mathbb T)}+\mathcal O(N^{-\sigma-\beta}) .
\end{equation}
We recall the estimate \(\tnorm{\mathcal F_N[f]}_{L^\infty(\mathbb T)}\leq c\log N\tnorm{f}_{L^\infty(\mathbb T)}\) for \(N\geq 2\), which follows from the \(L^1\)-bound for the Dirichlet kernel \(\sum_{k=-N}^N\zeta^k\). Then the term on the right side of \eqref{eq:prf-u_nt-is-AL0.0} can be estimated by
\begin{align*}
\norm{v_{N,t,p,q}-\mathcal F_{L_N}[v_{p,q}]}_{L^\infty(\mathbb T)}&\\
&\hspace{-83pt}=\norm{\mathcal F_{L_N} \left[ e^{-t(p+q)}\zeta^p\bar{\zeta}^q\, \partial^{m+p}\bar\partial^{n+q}[u \circ \psi_N](e^{-t}\cdot)-\zeta^p\bar{\zeta}^q\,\partial^{m+p}\bar\partial^{n+q}[u\circ \psi] \right]}_{L^\infty(\mathbb T)}\\
&\hspace{-83pt}\leq c\log N\norm{ e^{-t(p+q)} \,\partial^{m+p}\bar\partial^{n+q}[u \circ \psi_N](e^{-t}\cdot)-\partial^{m+p}\bar\partial^{n+q}[u\circ \psi] }_{L^\infty(\mathbb T)}\\
&\hspace{-83pt}\leq  c' \log N e^{-t(p+q)} \norm{\partial^{m+p}\bar\partial^{n+q}[u \circ \psi_N](e^{-t}\cdot)- \partial^{m+p}\bar\partial^{n+q}[u\circ \psi]}_{L^\infty(\mathbb T)}\\
&\hspace{-83pt}\quad + c'\log N \norm{\partial^{m+p}\bar\partial^{n+q}[u\circ \psi]}_{L^\infty(\mathbb T)}\bigl|e^{-t(p+q)}-1\bigr|,
\end{align*}
for some \(c,c'>0\). Combining this with \(\tabs{e^{-t(p+q)}-1}=\mathcal O(\tabs{t})\) and \eqref{eq:prf-u_nt-is-AL0.0} yields, uniformly for \(\abs{t}\leq \tfrac{A\log N}{N}\),
\begin{multline*}
\norm{v_{N,t,p,q}-v_{p,q}}_{L^\infty(\mathbb T)}\leq c'' \log N  \norm{\partial^{m+p}\bar\partial^{n+q}[u \circ \psi_N](e^{-t}\cdot)- \partial^{m+p}\bar\partial^{n+q}[u\circ \psi]}_{L^\infty(\mathbb T)}\\
{}+ \mathcal O\bigl(\abs{t}\log N +N^{-\sigma-\beta}\bigr),
\end{multline*}
for some \(c''>0\).
To estimate the remaining norm, we use a Faà di Bruno-type expansion for the derivatives of \(u\circ \psi_N\).
More precisely, for non-negative integer \(x\) and \(y\) satisfying \(x+y\leq \varkappa\) there exist multivariate polynomials \(P_{k,\ell}\), independent of \(N\), \(u\) and \(\psi_N\), such that
\begin{equation}\label{eq:prf-u_nt-is-AL1}
\partial^{x}\bar\partial^{y}[u\circ\psi_N] =
\sum_{a,b=0}^{x,y} (\partial^{a}\bar\partial^{b}u)(\psi_N)\, P_{x,a}\left(\psi_N',\dots,\psi_N^{(x)}\right)\overline{P_{y,b}\left(\psi_N',\dots,\psi_N^{(y)}\right)}.
\end{equation} 
An analogous expression holds for \(\partial^{x}\bar\partial^{y}[u\circ \psi]\). We recall \(\sigma=\varkappa-m-n-s\). It becomes clear that \(v_{N,t,p,q}\) satisfies \eqref{eq:AL1} and \eqref{eq:AL2} if for \(a\leq m+p\) and \(b\leq n+q\), and any \(\gamma\in(\beta,\alpha)\)
\begin{equation}\label{eq:prf-u_nt-is-AL1.1}
\norm{(\partial^{a}\bar\partial^{b}u)(\psi(\cdot))-(\partial^{a}\bar\partial^{b}u)(\psi_N(e^{-t}\cdot))}_{L^\infty(\mathbb T)}=\mathcal O\bigl(\abs{t}^\gamma+N^{-\sigma-\gamma}\bigr),\qquad \abs{t}\leq \mfrac{A\log N}{N},
\end{equation}
and for \(j=0,\dots,\varkappa\)
\begin{equation}\label{eq:prf-u_nt-is-AL1.2}
\norm{\psi^{(j)}-\psi_N^{(j)}(e^{-t}\cdot)}_{L^\infty(\mathbb T)}=\mathcal O\bigl(\abs{t}+ N^{j-\varkappa-1-\gamma}\bigr).
\end{equation}
The estimate \eqref{eq:prf-u_nt-is-AL1.2} follows from \cref{theo:Bernstein0} and \cref{theo:Fourier-Master}, while keeping in mind that \(\psi\in C^{\varkappa+1,\alpha}(\mathbb T)\). To establish \eqref{eq:prf-u_nt-is-AL1.1} we have to distinguish the two cases \(a+b = \varkappa\) and \(a+b<\varkappa\). In the former case we have \(\sigma=0\) as well as
\begin{align*}
\norm{(\partial^{a}\bar\partial^{b}u)(\psi(\cdot))-(\partial^{a}\bar\partial^{b}u)(\psi_N(e^{-t}\cdot))}_{L^\infty(\mathbb T)}&\leq \hnorm{\partial^{a}\bar\partial^{b}u}_{U,\alpha} \norm{\psi-\psi_N(e^{-t}\cdot)}_{L^\infty(\mathbb T)}^\alpha\\
&=\mathcal O\Bigl(\abs{t}^\alpha+N^{-\alpha(\varkappa+1+\gamma)}\Bigr),
\end{align*}
where \(U\) is a neighborhood of \(\partial \mathscr D\) on which \(u=\log \omega\) is well-defined. The second case can be solved by performing a first order Taylor expansion of \(\partial^{a}\bar\partial^{b}u\) and applying \eqref{eq:prf-u_nt-is-AL1.2}. This completes the proof that \(u_{N,t,m,n}\in \sL^{\varkappa-m-n,\alpha}\).

Let us now turn to proving \(\tnorm{ u \circ \phi_N^{-1}}_{C^{\varkappa,\alpha}(\mathcal A_N)}=\mathcal O(1)\). To show the uniform boundedness it is sufficient to show \(\tnorm{\partial^{x}\bar\partial^{y}[u\circ\psi_N]}_{L^\infty(\cA_N)}=\mathcal O(1)\) for arbitrary \(x+y\leq \varkappa\) and \([\partial^{x}\bar\partial^{y}[u\circ\psi_N]]_{\cA_N,\alpha}=\mathcal O(1)\) for \(x+y=\varkappa\).
For \(x+y\leq \varkappa\) we employ \eqref{eq:prf-u_nt-is-AL1} to obtain 
\begin{multline*}
\norm{\partial^{x}\bar\partial^{y}[u\circ\psi_N]}_{L^\infty(\cA_N)
}\\
{}\leq  \sum_{a,b=0}^{x,y}\norm{\partial^{a}\bar\partial^{b}u}_{L^\infty(U)}\norm{ P_{x,a}\left(\psi_N^{(1)},\dots,\psi_N^{(x)}\right){P_{y,b}\left(\psi_N^{(1)},\dots,\psi_N^{(y)}\right)}}_{L^\infty(\cA_N)},
\end{multline*}
which is uniformly bounded on the given domain due to the uniform boundedness of \(\psi_{N}^{(k)}\) for \(k\leq \varkappa+1\). Again, the uniform boundedness of \(\psi_N^{(k)}\) on \(\cA_N\) follows from \cref{coro:uni-conv-Fourier}. Further, for \(x+y=\varkappa\) we obtain in a similar way
\begin{align*}
\hnorm{\partial^{x}\bar\partial^{y}[u\circ\psi_N]}_{\cA_N,\alpha}&\\
&\hspace{-1.7cm}\leq  \sum_{a,b=0}^{x,y}\hnorm{(\partial^{a}\bar\partial^{b}u)\circ\psi_N}_{\cA_N,\alpha} \norm{ P_{x,a}\left(\psi_N^{(1)},\dots,\psi_N^{(x)}\right)P_{y,b}\left(\psi_N^{(1)},\dots,\psi_N^{(y)}\right)}_{L^\infty(\cA_N)}\\
&\hspace{-60pt}\quad  +\sum_{a,b=0}^{x,y}\norm{(\partial^{a}\bar\partial^{b}u)\circ\psi_N}_{L^\infty(\cA_N)} \Bigl[ P_{x,a}\left(\psi_N^{(1)},\dots,\psi_N^{(x)}\right)\overline{P_{y,b}\left(\psi_N^{(1)},\dots,\psi_N^{(y)}\right)}\Bigr]_{\cA_N,\alpha}\\
&\hspace{-1.7cm}\eqqcolon S_1+S_2,
\end{align*}
where we have used the Hölder estimate 
\begin{equation}\label{eq:prf-u_nt-is-AL2}
\hnorm{fg}_{\cA_N,\alpha}\leq \hnorm{f}_{\cA_N,\alpha} \norm{g}_{L^\infty(\cA_N)}+ \tnorm{f}_{L^\infty(\cA_N)}\hnorm{g}_{\cA_N,\alpha}
\end{equation}
for two generic functions \(f\) and \(g\). We can bound \(S_2\) using the estimate  
\[\tnorm{(\partial^{a}\bar\partial^{b}u)\circ\psi_N}_{L^\infty(\cA_N)}\leq \tnorm{\partial^{a}\bar\partial^{b}u}_{L^\infty(U)}\]
 and  \eqref{eq:prf-u_nt-is-AL2} together with 
\[\hnorm{\psi_N^{(j)}}_{\cA_N,\alpha}\leq \mfrac{\pi}{2} \norm{\psi_N^{(j+1)}}_{L^\infty(\cA_N)}=\mathcal O(1),\qquad j=1,\dots, \varkappa,\]
ensured by \cref{theo:Fourier-Master} since \(\psi^{(j+1)}\in C^{0,\alpha}(\mathbb T)\). Hence, we have \(S_2=\mathcal O(1)\). For \(S_1\) it is sufficient to bound \([(\partial^{a}\bar\partial^{b}u)\circ\psi_N]_{\cA_N,\alpha}\), which is achieved by
\[\hnorm{(\partial^{a}\bar\partial^{b}u)\circ\psi_N}_{\cA_N,\alpha} \leq \mfrac{\pi^\alpha}{2^\alpha} \hnorm{\partial^{a}\bar\partial^{b}u}_{U,\alpha} \norm{\psi_{N}'}_{L^{\infty}(\cA_N)}^\alpha= \mathcal O(1).\]
This verifies \(\tnorm{ \log \omega \circ \phi_N^{-1}}_{C^{\varkappa,\alpha}(\cA_N)}=\mathcal O(1)\) and hence completes the proof.
\end{proof}

\section{The Foliation Flow Technique}\label{sec:foliation-technique}
In this section we will recall the key ideas behind the \textit{foliation flow} technique and adapt it to our situation. This technique was developed in \cite{hedenmalm2017planar} to obtain an asymptotic expansion of planar orthogonal polynomials for varying weights, and was later used in \cite{hedenmalm2024riemann} to derive an analogous expansion for fixed weights. In both works, the weights are assumed to be real-analytic.

Let us consider a family of conformal mappings \(\psi_t\) defined on some neighborhood of \(\mathbb D^c\), normalized by \(\psi_t(\infty)=\infty\), and \(C^1\)-smooth in \(t\) for \(\abs{t}< \delta\), where \(\delta\in (0,1)\).
We further assume that the curves \(\psi_t(\mathbb T)\) are nested monotonically as \(t\) varies.
These curves then foliate the annular domain
\[\mathcal E\coloneqq \bigcup_{\abs t<\delta}\psi_t(\mathbb T).\]
Associated with the foliation \(\psi_t\), we define the annular realization
\[\Psi(z)\coloneqq \psi_{-\log \abs z}\left(\frac{z}{\abs z}\right),\qquad z\in \mathcal A_\delta\coloneqq \mathbb D_{e^\delta}\setminus \overline{\mathbb D}_{e^{-\delta}}.\]
Thus, \(\Psi\) maps the reference annulus \(\mathcal A_\delta\) onto the foliated region \(\mathcal E\).

Let \(f\) be integrable on \(\mathcal E\) with respect to \(\d\A\), the area measure normalized by \(1/\pi\). Assuming the Jacobian \(J_\Psi\) of \(\Psi\) is strictly positive, we get 
\begin{align*}
\int_{\mathcal E} f(z)\,\d\A(z)&=\int_{\Psi(\cA_\delta)}f(z)\,\d\A(z)\\
&=\int _{\cA_\delta} f( \Psi(z)) \,J_\Psi(z)\, \d\A(z)\\
&=\mfrac{1}{\pi} \int_{-\delta}^\delta \int _\mathbb T e^{-2t}f(\psi_t(\zeta))\, J_\Psi(e^{-t}\zeta)\,\abs{\d\zeta}\,\d t.
\end{align*}
The Jacobian can be computed explicitly.
Let us write \(\Psi(z)=\psi\circ g\) for \(\psi(z,t)\coloneqq \psi_t(z)\) and \(g(z)\coloneqq(g_1(z),g_2(z))=(\frac{z}{\abs{z}},-\log\abs{z})\). Then we can represent the Jacobian as
\[J_\Psi(z)=\abs{\partial \Psi(z)}^2-\abs{\overline \partial  \Psi(z)}^2=\abs{\partial [\psi\circ g](z)}^2-\abs{\overline \partial[\psi\circ g](z)}^2.\]
A direct computation reveals
\[J_\Psi(z) =-\frac{1}{\abs{z}^3}\mathrm{Re}\left(\overline{z} \,(\partial_t \psi_{t})\left(\tfrac{z}{\abs{z}} \right)\, \overline{\psi_t'\left(\tfrac{z}{\abs{z}}\right)}\right)_{\big|t =-\log \abs{z}}. \]
Thus, inserting this expression for the Jacobian into the preceding calculation, we obtain the simple disintegration formula
\begin{equation}\label{eq:simple disintegration}
\int_\mathcal E f(z)\,\d\A(z) =  -\mfrac{1}{\pi}\int_{-\delta}^\delta \int_\mathbb Tf(\psi_t(\zeta))\Re\bigl(\overline\zeta\,(\partial_t\psi_t)(\zeta)\overline{\psi'_t(\zeta)} \bigr)\,\abs{\d\zeta}\,\d t.
\end{equation}
From the disintegration formula \eqref{eq:simple disintegration}, we obtain a useful criterion for identifying the orthogonal polynomials. In fact, let \(P\) be a polynomial of degree \(N\) which satisfies
\begin{equation}\label{eq:simple constant flow}
\Upsilon_t(\zeta)\coloneqq -\mfrac{1}{\pi}\abs{P(\psi_t(\zeta)) }^2\mathrm{Re}\,\big(\overline\zeta\,(\partial_t\psi_t)(\zeta)\overline{\psi'_t(\zeta)} \big)\equiv c_t,\qquad \zeta\in\mathbb T.
\end{equation}
In other words, we assume that \(\Upsilon_t(\zeta)\) is constant in \(\zeta\). If \(P\) has no zeros in \(\mathcal E\) or in the unbounded component of \(\mathbb C\setminus \mathcal E\), then \eqref{eq:simple disintegration} yields for every \(k=0,\dots,N-1\)
\[\int_\mathcal Ez^k \overline{P(z)}\d \A(z)= \int_{-\delta}^\delta\int_\mathbb T\frac{\psi_t(\zeta)^k}{P(\psi_t(\zeta))}\Upsilon_t(\zeta)\abs{\d\zeta}\d t=\int_{-\delta}^\delta c_t \int_\mathbb T \frac{\psi_t(\zeta)^k}{P(\psi_t(\zeta))} \abs{\d\zeta}\d t=0,\]
where the last equality follows from the Cauchy integral theorem. The inner integral is \(2\pi\) times the value of the integrand at infinity, which is zero since \(P\)  grows faster than \(z^k\) at infinity. It follows that \(P\) is, up to normalization, the \(N\)-th orthogonal polynomial of \(L^2(\mathcal E)\).

In the following subsections, we aim to transfer the orthogonality criterion \eqref{eq:simple constant flow} to the weighted space \(L^2(\mathscr D,\omega)\). Ideally, one would start with a polynomial candidate \(P\) together with a foliation of the whole domain \(\mathscr D\) for which the associated density \(\Upsilon_t\) is constant. In general, however, it is not known whether such a global construction always exists.
We therefore first carry out the construction in conformal coordinates, near the unit circle. More precisely, we introduce a holomorphic model function \(f_N\) and a foliation \(\psi_{N,t}\) near \(\mathbb T\). These objects are then transferred to \(\mathscr D\), producing a corresponding model function \(F_N\) and a shrinking annular neighborhood of \(\partial\mathscr D\). Both \(f_N\) and \(\psi_{N,t}\) are encoded by admissible Laurent families. The main objective is to choose these families so that the associated density \(\Upsilon_{N,t}\) is almost constant, in analogy with \eqref{eq:simple constant flow}.

In \cref{sec:Foliation and Model}, we introduce the model functions, the foliation near \(\mathbb T\), and the procedure which transfers the construction from conformal coordinates to \(\mathscr D\). In \cref{sec:flow-r=0}, we determine the model function explicitly in the case \(\varkappa=0\). The higher-order construction is carried out in \cref{sec:asymp-flow-decomp}, where an asymptotic decomposition of the logarithmic density \(\log \Upsilon_{N,t}\) leads to an iterative description of the admissible Laurent families involved.

\subsection{The Foliation and the Model Function}\label{sec:Foliation and Model}
For the rest of this section, let \(A>0\) be a fixed parameter, which will be specified later on. In addition, we define \(\delta_N=\frac{A\log N}{N}\) and \(r_N=e^{-\delta_N}\) as in the previous section.
Inspired by \cite{hedenmalm2024riemann}, we assume \(\psi_{N,t}\) to be of the form
\begin{equation}\label{eq:def foliation mapping}
\psi_{N,t}(\zeta) \coloneqq \zeta\exp\left({\textstyle \sum_{j=0}^{\varkappa} N^{-j} \eta_{N,t,j}(\zeta)} \right),\qquad \eta_{N,t,0}(\zeta)\coloneqq -t,
\end{equation}
where the functions \(\eta_{N,t,j}\in \sL^{\varkappa+1-j,\alpha}\) admit only Fourier coefficients corresponding to non-positive indices and satisfy \(\eta_{N,0,j}\equiv 0\). 
As before we define the foliated domain
\[\mathcal E_N \coloneqq \bigcup_{\abs{t}< \delta_N} \psi_{N,t}(\mathbb T), \qquad\mathcal A_N\coloneqq \mathbb D_{e^{\delta_N}} \setminus \overline{\mathbb D}_{e^{-\delta_N}}.\]
The condition \(\eta_{N,0,j}\equiv 0\) ensures that \(\mathcal E_N\) is located around \(\mathbb T\).
It can be observed that \(\abs{\psi_{N,t}(\zeta)}=e^{-t+\mathcal O(N^{-1})}\), uniformly for \(\abs{t}\leq\delta_N\) and \(\zeta\in\mathbb T\), which yields the inclusion \(\mathcal E_N\subset\mathbb D_{e^{2\delta_N}}\setminus\overline{\mathbb D}_{e^{-2\delta_N}}\) for sufficiently large \(N\). Applying \cref{theo:conf-inverse}, we conclude that \(\phi_N^{-1}\) is conformal on \(\mathcal E_N\). We may therefore define
\[\mathscr E_N\coloneqq\phi_N^{-1}(\mathcal E_N).\]

\begin{lemma}
The mapping \(\Psi_N(z)\coloneqq\psi_{N,-\log\abs{z}}(\tfrac{z}{\abs{z}})\) is a \(C^1\)-diffeomorphism from \(\mathcal A_N\) onto \(\mathcal E_N\). Moreover, for large \(N\) and \(\abs{t}\leq\delta_N\), the mapping \(\psi_{N,t}\) is conformal on \(\mathbb D^c\).
\end{lemma}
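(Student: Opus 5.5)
The plan is to treat both claims as perturbations of the trivial foliation \(\zeta\mapsto e^{-t}\zeta\). Write
\[\psi_{N,t}(\zeta)=\zeta\, e^{-t}\exp\bigl(h_{N,t}(\zeta)\bigr),\qquad h_{N,t}\coloneqq\sum_{j=1}^{\varkappa}N^{-j}\eta_{N,t,j}.\]
Each \(\eta_{N,t,j}\) has only non-positive Fourier indices. It is therefore a Laurent polynomial that is holomorphic on \(\mathbb C\setminus\{0\}\cup\{\infty\}\), in particular on a neighborhood of \(\mathbb D^c\).

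The first step is to extract uniform smallness from admissibility. For \(1\le j\le\varkappa\) we have \(\eta_{N,t,j}\in\sL^{\varkappa+1-j,\alpha}\) with \(\varkappa+1-j\ge1\).
\begin{itemize}
\item Since \(\eta_{N,0,j}\equiv0\), \eqref{eq:AL1} with \(s=0\) forces the limiting function \(\eta_j^{[0]}\) to vanish. Then \eqref{eq:AL2} gives \(\sup_{\abs t\le\delta_N}\tnorm{\eta_{N,t,j}}_{L^\infty(\mathbb T)}=\mathcal O(N^{-\beta})\).
\item By \cref{lem:AL-stable}, \(\eta_{N,t,j}'\in\sL^{\varkappa-j,\alpha}\). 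Its limit is \(-i\zeta^{-1}D_\tau\eta_j^{[0]}=0\), so \(\sup_{\abs t\le\delta_N}\tnorm{\eta'_{N,t,j}}_{L^\infty(\mathbb T)}=\mathcal O(N^{-\beta})\).
\item Likewise \(\partial_t\eta_{N,t,j}\in\sL^{\varkappa-j,\alpha}\) converges uniformly to a continuous limit, hence is \(\mathcal O(1)\) uniformly for \(\abs t\le\delta_N\).
\end{itemize}
Multiplying by \(N^{-j}\) with \(j\ge1\) gives the bounds I will use:
\[h_{N,t}=\mathcal O(N^{-1-\beta}),\qquad h'_{N,t}=\mathcal O(N^{-1-\beta}),\qquad \partial_t h_{N,t}=\mathcal O(N^{-1}),\]
uniformly on \(\mathbb T\) and \(\abs t\le\delta_N\). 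The maximum principle extends these bounds to \(\mathbb D^c\). I expect this bookkeeping, in particular pinning down that the limits vanish, to be the main point requiring care; everything after it is elementary.

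For conformality of \(\psi_{N,t}\) I will invoke \cref{lem:conf-inject}.
\begin{itemize}
\item Behavior at infinity: \(\psi_{N,t}(z)=e^{-t+h_{N,t}(\infty)}z+\mathcal O(1)\), with a nonzero coefficient.
\item Orientation and injectivity on \(\mathbb T\): set \(\zeta=e^{i\theta}\). The angular derivative of \(\arg\psi_{N,t}(e^{i\theta})\) equals \(1+\Re(\zeta h'_{N,t}(\zeta))=1+\mathcal O(N^{-1-\beta})>0\). So the argument is strictly increasing with total increase \(2\pi\). Hence \(\theta\mapsto\psi_{N,t}(e^{i\theta})\) is injective and positively oriented.
\end{itemize}
\Cref{lem:conf-inject} then gives injectivity on \(\mathbb D^c\). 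Finally, \(\psi'_{N,t}=e^{-t+h_{N,t}}(1+\zeta h'_{N,t})\neq0\) there, so \(\psi_{N,t}\) is conformal.

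For \(\Psi_N\), the \(C^1\)-regularity follows from the \(C^1\)-dependence of the coefficients on \(t\). To prove injectivity I pass to logarithmic coordinates \((\theta,t)\), with \(z=e^{-t+i\theta}\). There \(\log\Psi_N\) becomes
\[(\theta,t)\longmapsto(-t+\Re h_{N,t}(e^{i\theta}),\ \theta+\Im h_{N,t}(e^{i\theta})).\]
This map is a fixed isometry plus a perturbation \(g\) whose derivatives in \(\theta\) and \(t\) are \(\mathcal O(N^{-1})\) by the bounds above. Since \(g\) is \(2\pi\)-periodic in \(\theta\), the map \(I+g\) with \(\mathrm{Lip}(g)<1\) is injective modulo \(2\pi\) in the angular variable. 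Hence \(\Psi_N\) is injective on \(\mathcal A_N\), and it is onto \(\mathcal E_N\) by definition.

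It remains to check that the Jacobian is positive. Using the formula for \(J_\Psi\) from the beginning of the section, together with
\[\partial_t\psi_{N,t}=\psi_{N,t}\bigl(-1+\partial_t h_{N,t}\bigr),\qquad \psi_{N,t}'=\zeta^{-1}\psi_{N,t}\bigl(1+\zeta h'_{N,t}\bigr),\]
I get \(J_{\Psi_N}(z)=\abs z^{-2}\abs{\psi_{N,t}}^2\bigl(1+\mathcal O(N^{-1})\bigr)>0\). The inverse function theorem then makes \(\Psi_N\) a \(C^1\)-diffeomorphism onto \(\mathcal E_N\).
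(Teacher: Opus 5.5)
Your proof is correct. The conformality part follows the paper: strict monotonicity of the argument \(\theta+\Im h_{N,t}(e^{i\theta})\) on \(\mathbb T\), the linear behavior at infinity, and \cref{lem:conf-inject}.

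For the injectivity of \(\Psi_N\) you take a genuinely different route. The paper fixes a ray of angle \(\vartheta\) and solves \(\vartheta=\theta(t)+b_{N,t}(\theta(t))\) for the parameter hitting that ray. It then shows \(\partial_t\log r_{N,t}(\vartheta)=-1+\mathcal O(N^{-1})<0\). Strict nesting of the leaves, combined with injectivity on each leaf, gives injectivity of \(\Psi_N\). You instead observe that, in the coordinates \((\theta,t)\), the map \(\log\Psi_N\) is an isometry plus a \(2\pi\)-periodic perturbation. This perturbation has Lipschitz constant \(\mathcal O(N^{-1})\) on the convex strip \(\mathbb R\times(-\delta_N,\delta_N)\), so the map is injective modulo \(2\pi\) in the angular variable. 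That single step handles both injectivity along a leaf and disjointness of different leaves, and it avoids the implicit reparametrization \(\theta(t)\). In exchange, the paper's argument makes the monotone nesting of the leaves explicit. This is exactly the standing assumption of a monotonically nested foliation made at the beginning of \cref{sec:foliation-technique}, and it is the picture used when \(\partial\mathscr D\) is placed between two leaves in \cref{lem:inclusion-of-domains}. Your argument yields this too, but only indirectly (disjoint Jordan curves around \(0\), varying continuously in \(t\)).

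Finally, the bookkeeping you flagged as the delicate point is not needed. Uniform boundedness of \(\eta_{N,t,j}\), \(\eta_{N,t,j}'\) and \(\partial_t\eta_{N,t,j}\), together with the prefactor \(N^{-j}\) for \(j\geq1\), already gives the \(\mathcal O(N^{-1})\) bounds, and these suffice for every step. Your sharper \(\mathcal O(N^{-1-\beta})\) bounds via vanishing limits are correct but superfluous.
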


\begin{proof}
By admissibility and \cref{lem:AL-stable}, the functions \(\eta_{N,t,j}\), \(\partial_t\eta_{N,t,j}\), and \(D_\tau\eta_{N,t,j}\) are uniformly bounded for \(\abs{t}\leq\delta_N\). For convenience, we decompose 
\[a_{N,t}(\theta)+ib_{N,t}(\theta)\coloneqq\sum_{j=1}^{\varkappa}N^{-j}\eta_{N,t,j}(e^{i\theta}),\]
into the real-valued families \(a_{N,t}\) and \(b_{N,t}\).
 It follows that \(a_{N,t}\), \(b_{N,t}\), and their first derivatives with respect to \(t\) and \(\theta\) are of order \(\mathcal O(N^{-1})\), uniformly for \(\abs{t}\leq\delta_N\).

We first verify the conformality of \(\psi_{N,t}\). On \(\mathbb T\), we have
\[\psi_{N,t}(e^{i\theta})=e^{-t+a_{N,t}(\theta)}e^{i(\theta+b_{N,t}(\theta))}.\]
Thus, \(\theta+b_{N,t}(\theta)\) gives a continuous choice of the argument of this curve. Its derivative is \(1+\partial_\theta b_{N,t}(\theta)=1+\mathcal O(N^{-1})>0\), so it is strictly increasing for large \(N\). Moreover, since \(b_{N,t}\) is \(2\pi\)-periodic, the function \(\theta+b_{N,t}(\theta)\) increases by exactly \(2\pi\) as \(\theta\) runs from \(0\) to \(2\pi\). Hence, the argument runs once around the circle without repetitions. We conclude that \(\psi_{N,t}\) is injective on \(\mathbb T\) and that \(\psi_{N,t}(\mathbb T)\) is positively oriented.

Since the Fourier coefficients of \(\eta_{N,t,j}\) vanish for all positive indices, these functions are bounded near infinity. Consequently, \(\psi_{N,t}\) is holomorphic in a neighborhood of \(\mathbb D^c\) and satisfies \(\psi_{N,t}(z)=c_{N,t}z+\mathcal O(1)\) as \(z\to\infty\), where \(c_{N,t}=1+\mathcal O(\frac{\log N}{N})\neq0\). We may therefore apply \cref{lem:conf-inject} and conclude that \(\psi_{N,t}\) is conformal on \(\mathbb D^c\).

It remains to compare the curves corresponding to different values of \(t\). Fix an angle \(\vartheta\in\mathbb R\). By the strict monotonicity established above, there is a unique \(\theta(t)\) such that \(\vartheta=\theta(t)+b_{N,t}(\theta(t))\).
Differentiating this identity with respect to \(t\) gives
\[\theta'(t)=-\frac{\partial_tb_{N,t}(\theta(t))}{1+\partial_\theta b_{N,t}(\theta(t))}=\mathcal O(N^{-1}).\]
The curve \(\psi_{N,t}(\mathbb T)\) intersects the ray of angle \(\vartheta\) at the radius \(r_{N,t}(\vartheta)=e^{-t+a_{N,t}(\theta(t))}\).
We therefore obtain
\[\partial_t\log r_{N,t}(\vartheta)=-1+\partial_ta_{N,t}(\theta(t))+\partial_\theta a_{N,t}(\theta(t))\theta'(t)=-1+\mathcal O(N^{-1}).\]
For large \(N\), this derivative is strictly negative, uniformly in \(t\) and \(\vartheta\). Hence, the radius at every fixed angle decreases strictly as \(t\) increases. The curves \(\psi_{N,t}(\mathbb T)\) are therefore strictly nested and, in particular, mutually disjoint.

The injectivity of \(\psi_{N,t}\) on \(\mathbb T\), together with the strict nesting of the curves, shows that \(\Psi_N\) is injective on \(\mathcal A_N\). Its image is \(\mathcal E_N\) by definition. Finally, admissibility of \(\eta_{N,t,j}\) ensures that \(\Psi_N\) is \(C^1\), while the preceding estimates give
\[J_{\Psi_N}(e^{-t}e^{i\theta})=e^{2t}\Re\Bigl(-e^{-i\theta}(\partial_t\psi_{N,t})(e^{i\theta})\overline{\psi_{N,t}'(e^{i\theta})}\Bigr)=1+\mathcal O(N^{-1})>0.\]
The inverse function theorem now shows that the inverse of \(\Psi_N\) is \(C^1\), which completes the proof.
\end{proof}

We turn now to the construction of the holomorphic approximation \(F_N\) of \(P_N\). We first define 
\begin{equation}\label{eq:def model f_N}
f_N(z) \coloneqq \sqrt{N+1}\exp \left({\textstyle \sum_{j=0}^\varkappa N^{-j}\nu_{N,j}(z)}\right),
\end{equation}
where \(\nu_{N,j}\) are assumed to be holomorphic on \(\mathbb{C}\setminus \{0\}\) and finite at infinity. More precisely, we assume that \(\nu_{N,j}=\nu_{N,0,j}\) for some functions \(\nu_{N,t,j}\in\sL^{\varkappa-j,\alpha}\).
We then define our model function \(F_N\coloneqq \Lambda_N[f_N]\) through the operator
\[\Lambda_N[f](z)\coloneqq \phi'_N(z)\phi_N^N(z) f(\phi_N(z)),\]
hence we have
\begin{equation}\label{eq:def F_N}
F_N = \sqrt{N+1} \phi_N'\phi_N^N \exp \left({\textstyle \sum_{j=0}^\varkappa N^{-j}\nu_{N,j}\circ \phi_N}\right).
\end{equation}
For large \(N\) and every \(\rho>r_N = e^{-\delta_N}\) for which \(\mathscr D_{N,\rho}\setminus \overline{\mathscr D_{N,r_N}}\) lies within the domain of definition of \(\omega\), this operator is an isometric isomorphism
\[\Lambda_N \colon L^2\bigl(\mathbb D_\rho\setminus \overline{\mathbb{D}_{r_N}},\tabs{\cdot}^{2N}(\omega\circ \phi^{-1}_N)\bigr)\to L^2\bigl(\mathscr D_{N,\rho}\setminus \overline{\mathscr D_{N,r_N}},\omega\bigr).\]
Indeed, a straightforward calculation reveals
\begin{align*}
\int_{\mathscr D_{N,\rho}\setminus \mathscr D_{N,r_N}}\abs{\Lambda_N[f]}^2 \omega\,\d\A&=\int_{\phi^{-1}_N(\mathbb D_\rho\setminus \mathbb D_{r_N})}\abs{\phi_N'}^2 \abs{\phi_N}^{2N} \abs{f\circ \phi_N}^2  \omega\,\d\A\\
&=\int_{\mathbb D_\rho\setminus\mathbb D_{r_N}} \abs{z}^{2N}\abs{f(z)}^2\omega(\phi^{-1}_N(z))\,\d\A(z).
\end{align*}
As in \eqref{eq:simple disintegration}, we can use the foliation \(\Psi_N\) to obtain the disintegration identity
\begin{equation}\label{eq: disintegration}
\int_{\mathscr E_N} \Lambda_N[q](z) \overline{F_N(z)}\omega(z)\,\d\A(z) = \int_{-\delta_N}^{\delta_N}\int_\mathbb T \frac{q(\psi_{N,t}(\zeta))}{f_N(\psi_{N,t}(\zeta))}\Upsilon_{N,t}(\zeta)\abs{\d\zeta}\d t,
\end{equation}
where \(q\) is a suitable function and \(\Upsilon_{N,t}\) denotes the density of the foliation flow, given by
\begin{equation}\label{eq: foliation flow}
\Upsilon_{N,t}(\zeta) = \mfrac{1}{\pi}\abs{f_N(\psi_{N,t}(\zeta))}^2\abs{\psi_{N,t}(\zeta)}^{2N} \omega(\phi_N^{-1}(\psi_{N,t}(\zeta)))\Re\Bigl(-\overline \zeta \partial_t\psi_{N,t}(\zeta) \overline{\psi'_{N,t}(\zeta)} \Bigr),
\end{equation}
for \(\zeta\in\mathbb T\). Our goal is an appropriate choice of \(\psi_{N,t}\) such  that for every \(\beta\in(0,\alpha)\)
\begin{equation}\label{eq: flow goal}
\Upsilon_{N,t}(\zeta) = \mfrac{1}{\pi}(N+1)e^{-2(N+1)t}\Bigl(1+\mathcal O\bigl(N^{-\varkappa-\beta}\bigr)\Bigr),\qquad \abs{t}\leq  \delta_N, \zeta\in\mathbb T.
\end{equation} 
As it turns out, it is more convenient to consider the logarithmic density. We recall that \(\omega\) is defined on a neighborhood of \(\overline{\mathscr D}\) and is strictly positive on \(\partial\mathscr D\). Since \(\mathscr E_N\) shrinks towards \(\partial\mathscr D\), the preceding lemma ensures that \(\Upsilon_{N,t}>0\) for all sufficiently large \(N\). We may therefore define
\begin{equation}\label{eq: app flow}
\Pi_{N,t}(\zeta)\coloneqq\log\Upsilon_{N,t}(\zeta)+2(N+1)t-\log(N+1)+\log(\pi).
\end{equation}
Instead of establishing \eqref{eq: flow goal} directly, we aim to choose the functions \(\eta_{N,t,j}\) and \(\nu_{N,j}\) such that, for every \(\beta\in(0,\alpha)\),
\begin{equation}\label{eq: app flow goal}
\Pi_{N,t}(\zeta)=\mathcal O(N^{-\varkappa-\beta}),\qquad \abs{t}\leq\delta_N,\quad \zeta\in\mathbb T,
\end{equation}
as \(N\to\infty\). Once this estimate is established, \eqref{eq: flow goal} follows immediately by exponentiating \(\Pi_{N,t}\). Thus, it remains to determine appropriate functions \(\eta_{N,t,j}\) and \(\nu_{N,j}\) for which \eqref{eq: app flow goal} holds.

\subsection[The Logarithmic Density for the Special Case: \texorpdfstring{\(\varkappa=0\)}{k=0}]{The Logarithmic Density for the Special Case: \texorpdfstring{\(\bm{\varkappa=0}\)}{k=0}}\label{sec:flow-r=0}

Before we tackle the general case it may be useful to first consider the simplest case. For now, let \(\varkappa=0\), then \(\psi_{N,t}(\zeta)= e^{-t}\zeta\) and \(f_N(z)= \sqrt{N+1} e^{\nu_{N,0}(z)}\) for some family \(\nu_{N,0}\in\sL^{0,\alpha}\) independent of \(t\). In fact, we have \(\nu_{N,0} = \nu_{N,0,0}\) for some \(\nu_{N,t,0}\in \sL^{0,\alpha}\).
Then using the notation \(u_{N,t}=u_{N,t,0,0} = \mathcal F_{L_N}[\log \omega \circ \phi^{-1}_N(e^{-t}\cdot)]\), as in \cref{prop:u_Nt-is-AL}, we obtain from \cref{theo:Bernstein0} 
\[\log(\omega(\phi_N^{-1}(e^{-t}\zeta)))-u_{N,t}(\zeta)=\mathcal O\biggl(\norm{\log(\omega\circ \phi_N^{-1}(e^{-t}\cdot))  }_{C^{0,\alpha}(\mathbb T)} \mfrac{(\log N)^{1+\alpha}}{N^{\alpha}}\biggr).\]
Using \cref{coro:uni-conv-Fourier}, we obtain
\begin{align*}
\norm{\log(\omega\circ\phi_N^{-1}(e^{-t}\cdot))}_{C^{0,\alpha}(\mathbb T)}
&\leq \norm{\log\omega}_{L^\infty(U)}+\mfrac{\pi^\alpha}{2^\alpha}e^{\alpha\abs{t}}\hnorm{\log\omega}_{U,\alpha}\norm{(\phi_N^{-1})'(e^{-t}\cdot)}_{L^\infty(\mathbb T)}^\alpha\\
&=\mathcal O(1),
\end{align*}
uniformly for \(\abs{t}\leq\delta_N\), where \(U\) is an open neighborhood of \(\partial\mathscr D\) on which \(\omega\) is bounded away from zero. We conclude
\[\log(\omega(\phi_N^{-1}(e^{-t}\zeta)))-u_{N,t}(\zeta) = \mathcal O(N^{-\beta}).\]
Using \eqref{eq:AL2}, the logarithmic density \eqref{eq: app flow} simplifies to
\begin{align*}
\Pi_{N,t}(\zeta)&=2\Re(\nu_{N,0}(e^{-t}\zeta))+ \log(\omega(\phi_N^{-1}(e^{-t}\zeta)))\\
&=2\Re(\nu_{N,0}(e^{-t}\zeta))+ u_{N,t}(\zeta)+\mathcal O(N^{-\beta})\\
&=2\Re(\nu_{0}(\zeta))+ u_{0}(\zeta)+\mathcal O(N^{-\beta}),
\end{align*}
where \(\nu_0\) and \(u_0\) denote, in the sense of \cref{def:AL-Families}, the limiting functions of \(\nu_{N,0}\) and \(u_{N,t}\), respectively. In particular, we have \(u_0=\log\omega\circ \phi^{-1}\).
We conclude that it remains to find \(\nu_{N,0}\) such that the real part of its limiting function \(\nu_0\) coincides with \(-u_0/2\) on \(\mathbb T\).

We recall the Herglotz transform presented in \cref{sec:herglotz}, which satisfies \(\Re\Herg[f]=f\) on \(\mathbb T\) for any real-valued \(f\in C^{0,\alpha}(\mathbb T)\). Hence, the natural choice is \(\nu_{0}\coloneqq -\frac{1}{2}\Herg[u_0]\) together with
\[\nu_{N,0}\coloneqq\mathcal F_{L_N}[\nu_0]=-\tfrac{1}{2}\mathcal F_{L_N}[\Herg[\log \omega \circ \phi^{-1} ]]\in \sL^{0,\alpha}.\]
Thus, the logarithmic density satisfies \(\Pi_{N,t}(\zeta) = \mathcal O(N^{-\beta})\), since \(2\Re(\nu_0(\zeta))=-u_0(\zeta)\). We may identify \(\nu_0\) with the Szeg\H{o} function associated with \(\omega\), expressed in the exterior disk coordinates. Indeed, if \(V\) denotes the Szeg\H{o} function normalized by \(2\Re V=-\log\omega\) on \(\partial\mathscr D\) and \(V(\infty)\in\mathbb R\), then \(\nu_0=V\circ\phi^{-1}\). Its appearance here is consistent with the historical development of the asymptotics of planar orthogonal polynomials described by Suetin and Hedenmalm--Wennman \cite{suetin1974polynomials,hedenmalm2024riemann}.

\subsection{Asymptotic Decomposition of the Logarithmic Density}\label{sec:asymp-flow-decomp}
In the following, let \(u_N=\log(\omega \circ \phi_N^{-1})\), which is well-defined on any annular neighborhood of \(\mathbb T\) with width \(\mathcal O(\tfrac{\log N}{N})\). Further, we consider for \(j=0,\dots,\varkappa\) the  families \(\eta_{N,t,j}\in\sL^{\varkappa+1-j,\alpha}\), which only admit  Fourier coefficients corresponding to non-positive indices. Consequently, these families are finite at infinity. In addition we demand that \(\eta_{N,t,0}(\zeta)=-t\) and \(\eta_{N,0,j}\equiv 0\) for all \(j\).
Inspired by the special case \(\varkappa=0\), we assume that \(\nu_{N,j}=\mathcal F_{L_N}[\nu_j]\) for some \(\nu_j\in C^{\varkappa-j,\alpha}(\mathbb T)\), where each \(\nu_j\) admits only Fourier coefficients with non-positive indices. With this choice, it follows that \(\nu_{N,j}(e^{-t}\,\cdot)\in\sL^{\varkappa-j,\alpha}\).

Using \eqref{eq:def foliation mapping} and \eqref{eq:def model f_N}  the logarithmic density \eqref{eq: app flow} reads as
\begin{equation}\label{eq: app flow2}
\begin{split}
\Pi_{N,t}(\zeta)=&\ 2t+2  \sum_{j=0}^{\varkappa-1}\frac{\Re \eta_{N,t,j+1}(\zeta)}{N^j}+2\sum_{j=0}^\varkappa \frac{\Re \nu_{N,j}( \psi_{N,t}(\zeta))}{N^j}\\
&+ u_N( \psi_{N,t}(\zeta)) + \log \Re \big(-\overline \zeta \partial_t\psi_{N,t}(\zeta)\overline{\psi'_{N,t}(\zeta)}\big).
\end{split}
\end{equation}
We recall \(u_{N,t,m,n}= \mathcal F_{L_N}[\partial ^m \bar \partial ^n[u_N](e^{-t}\cdot)]\). Let \(S_j\) be the set which contains the trivial Laurent families \(t\), \(e^{-t}\), \(\zeta\) and \(\bar{\zeta}\) as well as all of the following functions together with their complex conjugates,
\begin{center}
\begin{tabular}{lcl}
\(u_{N,t,m,n},\)&& \(m+n = 0,\dots, j,\) \\
\(\nu_{N,n}^{(m)}(e^{-t}\cdot),\)&&\(m+n = 0,\dots, j,\, m\geq 1,\)\\
\(\eta_{N,t,k},\eta_{N,t,k}',\partial_t\eta_{N,t,k},\)&&\( k=0,\dots,j.\)
\end{tabular}
\end{center}

We observe that \(S_j\subset\sL^{\varkappa-j,\alpha}\). The following theorem provides an asymptotic decomposition of the logarithmic density in \eqref{eq: app flow2} which will later allow us to determine the families \(\eta_{N,t,j}\) and \(\nu_{N,j}\). Before stating the result, we clarify two points. We call \(Q_{N,t}\) a \textit{polynomial expression} in a collection \(S\subset \sL^{k,\alpha}\) if there exist a multivariate polynomial \(Q\) and \(f_{N,t,1},\dots,f_{N,t,n}\in S\) such that \(Q_{N,t}=Q(f_{N,t,1},\dots,f_{N,t,n})\). Moreover, the sets \(S_j\) possess a triangular structure in the sense that \(S_j\) depends on \(\eta_{N,t,k}\) only for \(k\leq j\) and \(\nu_{N,k}\) only for \(k\leq j-1\). This structure will allow us to determine the desired families recursively in \cref{col: flow goal}.

\begin{theorem}\label{theo: asymp flow decomp}
Set \(\eta_{N,t,0}\coloneqq-t\). For \(j=1,\dots,\varkappa\), let \(\eta_{N,t,j}\in\sL^{\varkappa+1-j,\alpha}\) satisfy \(\eta_{N,0,j}\equiv0\), and for \(j=0,\dots,\varkappa\), let \(\nu_{N,j}\in\sL^{\varkappa-j,\alpha}\). Suppose that all these families admit only Fourier coefficients corresponding to non-positive indices. Then there exist polynomial expressions \(Q_{N,t,j}\) in functions belonging to \(S_j\) such that, for every \(\beta\in(0,\alpha)\),
\begin{align*}
\Pi_{N,t}(\zeta) &=    2\Re\sum_{j=0}^{\varkappa-1} \frac{ \eta_{N,t,j+1}(\zeta)+\nu_{N,j}(e^{-t}\zeta)+Q_{N,t,j}(\zeta)}{N^j}\\
&\quad +2\Re \frac{ \nu_{N,\varkappa}(e^{-t}\zeta)+ Q_{N,t,\varkappa}(\zeta)}{N^{\varkappa}}+\mathcal O(N^{-\varkappa-\beta}),
\end{align*}
uniformly for \(\abs{t}\leq \delta_N\). In particular, \(Q_{N,t,j}\in \sL^{\varkappa-j,\alpha}\).
\end{theorem}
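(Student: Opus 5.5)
We expand each of the three non-trivial terms of \eqref{eq: app flow2} separately in powers of \(N^{-1}\), up to an error of order \(\mathcal O(N^{-\varkappa-\beta})\). The basic object is the small perturbation
\[\epsilon_{N,t}(\zeta)\coloneqq\sum_{j=1}^\varkappa N^{-j}\eta_{N,t,j}(\zeta),\qquad \psi_{N,t}(\zeta)=e^{-t}\zeta\,e^{\epsilon_{N,t}(\zeta)}.\]
By admissibility and \eqref{eq:AL2}, \(\epsilon_{N,t}\), \(\epsilon_{N,t}'\) and \(\partial_t\epsilon_{N,t}\) are \(\mathcal O(N^{-1})\) uniformly for \(\abs t\le\delta_N\). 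Hence every Taylor expansion in \(\epsilon_{N,t}\) truncated after order \(\varkappa\) leaves a remainder of size \(\mathcal O(N^{-\varkappa-1})\), provided the expanded function has \(\varkappa+1\) bounded derivatives on the relevant annulus. Where only \(\varkappa\) derivatives with a Hölder modulus are available, we truncate one order earlier and use the Hölder modulus instead, which gives \(\mathcal O(N^{-\varkappa-\beta})\).

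\textbf{Step 1: the Jacobian term.} We write
\[-\overline\zeta\,\partial_t\psi_{N,t}\overline{\psi_{N,t}'}=e^{-2t}\abs{e^{\epsilon_{N,t}}}^2\bigl(1-\partial_t\epsilon_{N,t}\bigr)\overline{\bigl(1+\zeta\epsilon_{N,t}'\bigr)}.\]
The logarithm of its real part then equals
\[-2t+2\Re\epsilon_{N,t}+\log\Re\bigl[(1-\partial_t\epsilon_{N,t})\overline{(1+\zeta\epsilon'_{N,t})}\bigr].\]
The term \(-2t\) cancels the \(2t\) in \eqref{eq: app flow2}, and \(2\Re\epsilon_{N,t}\) combines with the existing sum \(2\sum_j N^{-j}\Re\eta_{N,t,j+1}\). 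Expanding the last logarithm in its Taylor series about \(1\) to order \(\varkappa\) produces polynomials in \(\eta_{N,t,k}'\), \(\partial_t\eta_{N,t,k}\), \(\zeta\) and their conjugates. We group these by powers of \(N^{-1}\). Because the coefficient of \(N^{-j}\) only involves \(\eta_{N,t,k}\) with \(k\le j\), it belongs to \(S_j\). The one caveat is the shift in indexing: the leading \(\eta_{N,t,j+1}\) term sits at order \(N^{-j}\), so we must check that \(\eta_{N,t,j+1}\) itself never enters \(Q_{N,t,j}\). Since derivatives of \(\epsilon\) carry a factor \(N^{-1}\) already, the order-\(N^{-j}\) contribution only involves \(\eta_{N,t,k}\) with \(k\le j\). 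The truncation remainder is \(\mathcal O(N^{-\varkappa-1})\).

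\textbf{Step 2: the model term \(\nu_{N,j}(\psi_{N,t}(\zeta))\).} We Taylor expand the holomorphic Laurent polynomial \(\nu_{N,n}\) around \(e^{-t}\zeta\) in the increment \(e^{-t}\zeta(e^{\epsilon_{N,t}}-1)\), to order \(\varkappa-n\). Each term is a product of \(\nu_{N,n}^{(m)}(e^{-t}\zeta)\) with powers of \(e^{-t}\), \(\zeta\) and of \(\epsilon_{N,t}\). The coefficient of \(N^{-j}\) then involves only \(\nu_{N,n}^{(m)}\) with \(m\ge1\) and \(m+n\le j\), together with \(\eta_{N,t,k}\) for \(k\le j-n-m<j\); these all lie in \(S_j\), except for the leading term \(\nu_{N,j}(e^{-t}\zeta)\), which stays explicit as in the statement. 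For the remainder we need the \((\varkappa-n+1)\)-st derivative of \(\nu_{N,n}\) on the annulus containing \(\mathcal E_N\). This is where \cref{theo:Fourier-Master} enters. Applied to \(\nu_{N,n}\) with the \(C^{\varkappa-n,\beta}\) control, it gives only the Hölder modulus of the \((\varkappa-n)\)-th derivative. We therefore truncate at order \(\varkappa-n\) with a Hölder remainder
\[\mathcal O\bigl(\abs{\epsilon}^{\varkappa-n+\beta}\bigr)=\mathcal O\bigl(N^{-\varkappa+n-\beta}\bigr),\]
which after multiplication by \(N^{-n}\) is \(\mathcal O(N^{-\varkappa-\beta})\).

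\textbf{Step 3: the weight term \(u_N(\psi_{N,t}(\zeta))\).} This is the main obstacle, because \(u_N\) is only real \(C^{\varkappa,\alpha}\), so it is not holomorphic. We first take a real Taylor expansion in \(\partial,\bar\partial\) around \(e^{-t}\zeta\) with increment \(h=e^{-t}\zeta(e^{\epsilon_{N,t}}-1)\) and its conjugate \(\bar h\), to order \(\varkappa\). The coefficients are \(\partial^m\bar\partial^n u_N(e^{-t}\zeta)\). By \cref{prop:u_Nt-is-AL}, \(u_N\) has uniformly bounded \(C^{\varkappa,\alpha}\)-norm on \(\mathcal A_N\), so the remainder is \(\mathcal O(\abs h^{\varkappa+\alpha})=\mathcal O(N^{-\varkappa-\alpha})\); the zeroth-order term is \(\mathcal O(1)\) and needs no smallness. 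Next we replace each coefficient \(\partial^m\bar\partial^n u_N(e^{-t}\cdot)\) by its truncation \(u_{N,t,m,n}\). The coefficient is multiplied by \(h^m\bar h^n=\mathcal O(N^{-m-n})\), so the replacement error must be \(\mathcal O(N^{-\varkappa+m+n-\beta})\). For this we apply \cref{theo:Bernstein0} to the function \(\partial^m\bar\partial^n u_N(e^{-t}\cdot)\), which lies in \(C^{\varkappa-m-n,\alpha}(\mathbb T)\) uniformly in \(N\) and \(t\) by \cref{prop:u_Nt-is-AL}. Truncating at \(L_N\) gives the required rate up to the logarithmic loss, which is absorbed by decreasing \(\beta\). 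Collecting the powers of \(N^{-1}\) gives polynomial expressions in \(u_{N,t,m,n}\) with \(m+n\le j\), \(\eta_{N,t,k}\), \(\zeta\), \(e^{-t}\) and conjugates, all in \(S_j\). The order-\(N^0\) term \(u_{N,t,0,0}\) is real-valued, hence can be written as \(2\Re(u_{N,t,0,0}/2)\). All other contributions are real, so the whole sum can be written as \(2\Re\) of the collected expressions.

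Finally, \(Q_{N,t,j}\in\sL^{\varkappa-j,\alpha}\) follows because \(S_j\subset\sL^{\varkappa-j,\alpha}\) and the class is closed under sums, products and conjugation, as recorded before \cref{lem:AL-stable}.
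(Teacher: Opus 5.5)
Your proposal is correct and follows essentially the same route as the paper. It uses the same three-way split of \eqref{eq: app flow2} and the same Taylor expansions with H\"older remainders (via \cref{prop:u_Nt-is-AL} for \(u_N\) and \cref{theo:Fourier-Master} for \(\nu_{N,n}\)). It also replaces \(\partial^m\bar\partial^n u_N(e^{-t}\cdot)\) by \(u_{N,t,m,n}\) through \cref{theo:Bernstein0}, keeps the same triangular \(S_j\)-bookkeeping, and absorbs the term \(2N^{-j}\Re\eta_{N,t,j}\) from \(2\Re\epsilon_{N,t}\) into \(Q_{N,t,j}\) as the paper does. One small slip: in Step 2 the coefficient of \(N^{-j}\) involves \(\eta_{N,t,k}\) up to \(k=j-n-m+1\), not \(k\le j-n-m\); for instance \(\eta_{N,t,j}\) enters through \(\nu_{N,0}'(e^{-t}\zeta)e^{-t}\zeta\,\epsilon_{N,t}\). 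Since this is still \(\le j\), membership in \(S_j\) is unaffected.
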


\begin{proof}
We recall \(u_N = \log(\omega\circ \phi^{-1}_N)\) and  set \(\mathcal A_N'=\mathbb D_{e^{2\delta_N}} \setminus \overline{\mathbb D_{e^{-2\delta_N}}}\) for \(\delta_N=\frac{A\log N}{N}\) and some fixed \(A>0\). For sufficiently large \(N\) this annulus satisfies \(\mathcal E_N\subset \cA'_N\), since \(\tabs{\psi_{N,t}(\zeta)} = e^{-t+\mathcal O(N^{-1})}\) using the uniform boundedness of \(\eta_{N,t,j}\). Applying \cref{prop:u_Nt-is-AL} and using \cref{theo:Fourier-Master}, we conclude that \(u_N\) and \(\nu_{N,j}\) are well-defined and uniformly bounded on \(\mathcal A_N'\). Since \(\mathcal E_N\subset\mathcal A_N'\), the compositions \(u_N\circ\psi_{N,t}\) and \(\nu_{N,j}\circ\psi_{N,t}\) are well-defined on \(\mathbb T\) for \(\abs{t}\leq\delta_N\).

Let us begin by finding a decomposition of \(u_N\circ \psi_{N,t}\) in terms of admissible Laurent families. Using the notation \(v_{N,t}(\zeta)\coloneqq \psi_{N,t}(\zeta)-e^{-t}\zeta\) we get  by applying Taylor's theorem at the point \(e^{-t}\zeta\)
\[
u_N(\psi_{N,t}(\zeta)) =\sum_{m+n =0}^\varkappa\frac{\partial^{m}\bar\partial^{n}u_N(e^{-t}\zeta)}{m!n!} v^m_{N,t}(\zeta)\,\overline{v^n_{N,t}(\zeta)}+R_{\varkappa}(e^{-t}\zeta ,\psi_{N,t}(\zeta)),
\]
where \(R_{\varkappa}(e^{-t}\zeta ,\cdot)\) denotes  the corresponding error term associated to the base point \(e^{-t}\zeta\). If \(\varkappa = 0\), then the remainder vanishes. For \(\varkappa >0\) we have
\begin{multline*}
R_{\varkappa}(e^{-t}\zeta ,\psi_{N,t}(\zeta))
=\sum_{m+n=\varkappa}\frac{ \varkappa \,v_{N,t}^m(\zeta)\,\overline{v_{N,t}^n(\zeta)}}{m!n!}\\\times \int_0^1\frac{\partial^{m}\bar\partial^{n}u_N(e^{-t}\zeta+xv_{N,t}(\zeta))-\partial^{m}\bar\partial^{n}u_N(e^{-t}\zeta)}{(1-x)^{1-\varkappa}}\d x.
\end{multline*}
Thus, making use of the Hölder condition of \(\partial^{m}\bar\partial^{n}u_N\) we obtain for some \(c>0\)
\begin{align*}
\abs{R_{\varkappa}(e^{-t}\zeta ,\psi_{N,t}(\zeta))}&\leq  \sum_{m+n=\varkappa}\frac{\varkappa\abs{ v_{N,t}(\zeta)}^{m+n}}{m!n!} \hnorm{\partial^{m}\bar\partial^{n}u_N}_{\cA_N',\alpha}\int_0^1\frac{x^\alpha\abs{v_{N,t}(\zeta)}^\alpha}{(1-x)^{1-\varkappa}}\d x\\
&\leq  c\varkappa\abs{v_{N,t}(\zeta)}^{\varkappa+\alpha} \norm{u_N}_{C^{\varkappa,\alpha}(\mathcal A_N')}\sum_{m+n=\varkappa} \frac{1}{m!n!}\\
&=\mathcal O(N^{-\varkappa-\alpha}),
\end{align*}
since \(v_{N,t}(\zeta)=\mathcal O(N^{-1})\) uniformly for \(\zeta\in\mathbb T\) and \(\abs{t}\leq\delta_N\), while \(\tnorm{u_N}_{C^{\varkappa,\alpha}(\mathcal A_N')}=\mathcal O(1)\) by \cref{prop:u_Nt-is-AL}.
Hence, we observe 
\[u_N (\psi_{N,t}(\zeta))=\sum_{m+n =0}^\varkappa\frac{\partial^{m}\bar\partial^{n}u_N(e^{-t}\zeta) }{m!n!}v^m_{N,t}(\zeta)\,\overline{v^n_{N,t}(\zeta)}+\mathcal O(N^{-\varkappa-\alpha}).\]
Expanding further we get
\begin{align*}
v^m_{N,t}(\zeta)&=(\psi_{N,t}(\zeta)-e^{-t}\zeta)^m\\
&= e^{-mt}\zeta^m \biggl(e^{\sum_{j=1}^{\varkappa}N^{-j}{\eta_{N,t,j}(\zeta)}}-1\biggr)^m\\
&=e^{-mt}\zeta^m{\textstyle\biggl(\sum_{l=1}^\varkappa \frac{1}{l !} \Bigl( \sum_{j=1}^\varkappa N^{-j}{\eta_{N,t,j}(\zeta)}\Bigr)^l\biggr)^m}+\mathcal O(N^{-\varkappa-1})\\
&=e^{-mt}\zeta^m\sum_{j=m}^\varkappa N^{-j} \tilde F_{N,t,j,m}(\zeta)+O(N^{-\varkappa-1}).
\end{align*}
For \(m=0\), the expansion is understood with \(\tilde F_{N,t,0,0}=1\) and \(\tilde F_{N,t,j,0}=0\) for \(j\geq1\). For \(m\geq1\), the function \(\tilde F_{N,t,j,m}\) is a polynomial expression in \(\{\eta_{N,t,1},\dots,\eta_{N,t,j+1-m}\}\subset S_j\). In a similar way we can proceed for the conjugated term and get
\[\overline{v_{N,t}^n(\zeta)}=e^{-nt}\zeta^{-n}\sum_{j=n}^\varkappa N^{-j} \overline{\tilde F_{N,t,j,n}}(\zeta)+O(N^{-\varkappa-1}).\]
According to \cref{prop:u_Nt-is-AL} we know that \(u_{N,t,m,n}= \mathcal F_{L_N}[\partial^{m,n}u_N(e^{-t}\cdot)]\) is an \(\sL^{\varkappa-m-n,\alpha}\) family. Further, for any \(\beta\in(0,\alpha)\), we have
\begin{align*}
\partial^{m}\bar\partial^{n}u_N(e^{-t}\cdot)&=\mathcal F_{L_N}[\partial^{m}\bar\partial^{n}u_N(e^{-t}\cdot)]+\mathcal O\biggl (\norm{\partial ^{m}\bar\partial^{n}u_N(e^{-t}\cdot)}_{C^{\varkappa-m-n,\alpha}(\mathbb T)}  N^{m+n-\varkappa-\beta}\biggr)\\
&=u_{N,t,m,n} +\mathcal O(N^{m+n-\varkappa-\beta})
\end{align*}
while the asymptotic error is uniform in \(t\) due to \cref{theo:Bernstein0}, since 
\[\norm{\partial ^{m,n}u_N(e^{-t}\cdot)}_{C^{\varkappa-m-n,\alpha}(\mathbb T)}\leq \mathcal O\Bigl( e^{\alpha\abs{t}}\norm{u_N}_{C^{\varkappa,\alpha}(\mathcal A_N')}\Bigr)=\mathcal O(1)  .\]
Combining these results yields
\[u_N(\psi_{N,t}(\zeta))=u_{N,t,0,0}(\zeta)+\sum_{j=1}^\varkappa N^{-j}F_{N,t,j}(\zeta)+\mathcal O(N^{-\varkappa-\beta}),\]
where \(F_{N,t,j}\) is a polynomial expression in
\[\left.\left\{e^{-t},\zeta,\overline{\zeta},u_{N,t,m,n}(\zeta),\eta_{N,t,k}(\zeta),\overline{\eta_{N,t,k}}(\zeta)\,\right|\,k,m+n\in\{1,\dots,j\}\right\}\subset S_j.\]

We need to do similar computations for \(N^{-j}\nu_{N,j}\circ \psi_{N,t}\) and the \(\log\)-term in \eqref{eq: app flow2}. Let us begin with the former expression. By applying Taylor's theorem we obtain
\[N^{-j}\nu_{N,j}(\psi_{N,t}(\zeta))=N^{-j}\sum_{k=0}^{\varkappa-j}\frac{\nu_{N,j}^{(k)}(e^{-t}\zeta)}{k!} v_{N,t}^k(\zeta)+\mathcal{O}(N^{-\varkappa-\beta}),\]
while the asymptotic error results from estimating the corresponding remainder analogously to \(R_\varkappa\).

Now let \(\tilde F_{N,t,j,k}\) denote the same polynomial expression as before, then we get
\begin{align*}
N^{-j} \nu_{N,j}&(\psi_{N,t}(\zeta))\\
&= N^{-j}\nu_{N,j}(e^{-t}\zeta)+ N^{-j}\sum_{k=1}^{\varkappa-j}\frac{\nu_{N,j}^{(k)}(e^{-t}\zeta)}{k!} \zeta^ke^{-kt}\sum_{l=k}^{\varkappa-j} N^{-l} \tilde F_{N,t,l,k}(\zeta)+\mathcal O(N^{-\varkappa-\beta})\\
&=N^{-j} \nu_{N,j}(e^{-t}\zeta)+ \sum_{l=j+1}^{\varkappa} N^{-l} G_{N,t,j,l}(\zeta)+\mathcal O(N^{-\varkappa-\beta}),
\end{align*}
where \(G_{N,t,j,l}\) is a polynomial expression in
\[\left.\left\{ e^{-t},\zeta,\nu_{N,j}^{(k)}(e^{-t}\zeta),\eta_{N,t,k}(\zeta)\,\right|\, 1\leq k \leq l-j \right\}\subset S_l.\]
It remains to decompose the \(\log\)-term of \eqref{eq: app flow2} in an analogous way. We observe first
\begin{align*}
\psi'_{N,t}(\zeta)&={\textstyle\left(1+\zeta\sum_{j=1}^{\varkappa} N^{-j} \eta'_{N,t,j}(\zeta)\right)}e^{-t +\sum_{j=1}^{\varkappa} N^{-j} \eta_{N,t,j}(\zeta) },\\
\partial_t\psi_{N,t}(\zeta)&=\zeta\left({\textstyle -1+\sum_{j=1}^{\varkappa}N^{-j}\partial_t\eta_{N,t,j}(\zeta)}\right)e^{-t +\sum_{j=1}^{\varkappa} N^{-j} \eta_{N,t,j}(\zeta) }.
\end{align*}
The \(\log\)-term can now be written as
\begin{multline*}
\log \Re \big(-\overline \zeta \partial_t\psi_{N,t}(\zeta)\overline{\psi'_{N,t}(\zeta)}\big)=-2t+2\sum_{j=1}^{\varkappa} N^{-j}\Re \eta_{N,t,j}(\zeta)\\
{} +\log \Re \Bigl[{\textstyle\left( 1-\sum_{j=1}^{\varkappa}N^{-j}\partial_t\eta_{N,t,j}(\zeta)\right)\left(1+\overline{\zeta}\sum_{j=1}^{\varkappa} N^{-j} \overline{\eta'_{N,t,j}}(\zeta)\right)}\Bigr].
\end{multline*}
Notice that the term \(-2t\) cancels with the term \(2t\) in \eqref{eq: app flow2}.
By employing another Taylor expansion, the remaining logarithm can be rewritten as
\begin{align*}
\log \Re \bigl[\dots\bigr]&=\log \left({\textstyle 1+\sum_{j=1}^{\varkappa} N^{-j}\Re \tilde H_{N,t,j}(\zeta)}\right)+\mathcal O(N^{-\varkappa-1})\\
&= \sum_{j=1}^{\varkappa} 
N^{-j}H_{N,t,j}(\zeta)+\mathcal O(N^{-\varkappa-1}),
\end{align*}
where \(\tilde H_{N,t,j}\) and \(H_{N,t,j}\) are  polynomial expressions in
\[\left.\left\{\zeta,\bar\zeta,\Re \eta'_{N,t,k},\Im\eta'_{N,t,k},\Re \partial_t\eta_{N,t,k},\Im\partial_t\eta_{N,t,k}\, \right|\, 1\leq k\leq j\right\}\subset S_j.\]
The claimed result follows now by setting
\begin{align*}
Q_{N,t,j} &\coloneqq \frac12 F_{N,t,j}+\frac 12H_{N,t,j}+\eta_{N,t,j}+{\textstyle\sum_{k=0}^{j-1}G_{N,t,k,j}},\qquad j=1,\dots,\varkappa,
\end{align*}
while \(Q_{N,t,0}\coloneqq \tfrac 12 u_{N,t,0,0} \).
This completes the proof.
\end{proof}

\begin{corollary}\label{col: flow goal}
Let \(\varkappa\in \mathbb Z_{\geq 0}\) and \(\alpha\in(0,1)\). Let \(\mathscr D\) be a Jordan domain with \(C^{\varkappa+1,\alpha}\)-smooth boundary, and let \(\omega\) be a non-negative \(C^{\varkappa,\alpha}\)-smooth weight function defined in a neighborhood of \(\overline{\mathscr D}\), with \(\omega>0\) on \(\partial\mathscr D\). Then, for each \(j=0,\dots,\varkappa\), there exist
\[\eta_{N,t,j}\in \sL^{\varkappa+1-j,\alpha}\qquad\text{and}\qquad\nu_{N,j}\in \sL^{\varkappa-j,\alpha},\]
such that, for every \(A>0\) and \(\beta\in(0,\alpha)\), the density \(\Upsilon_{N,t}\) satisfies uniformly
\begin{equation}\label{eq:coro-flow}
\Upsilon_{N,t}(\zeta)=\mfrac{1}{\pi}(N+1) e^{-2(N+1)t} (1+\mathcal O(N^{-\varkappa-\beta})),
\qquad\zeta\in\mathbb T,\,\abs{t}\leq \mfrac{A\log N}{N}.
\end{equation}
Moreover, \(\eta_{N,t,j}\) and \(\nu_{N,j}\) contain only Fourier coefficients corresponding to non-positive indices, \(\eta_{N,0,j}\equiv 0\), and
\[\nu_{N,j}=\mathcal F_{L_N}[\nu_j]\]
for some \(\nu_j\in C^{\varkappa-j,\alpha}(\mathbb T)\).
\end{corollary}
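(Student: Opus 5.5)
The plan is to construct the families recursively in \(j\), using the triangular structure of \cref{theo: asymp flow decomp}, so that each coefficient of \(N^{-j}\) in the decomposition of \(\Pi_{N,t}\) vanishes up to an admissible error. By \cref{theo: asymp flow decomp} it suffices to arrange, for \(j=0,\dots,\varkappa-1\),
\[\Re\bigl(\eta_{N,t,j+1}(\zeta)+\nu_{N,j}(e^{-t}\zeta)+Q_{N,t,j}(\zeta)\bigr)=\mathcal O(N^{j-\varkappa-\beta}),\]
and also \(\Re(\nu_{N,\varkappa}(e^{-t}\zeta)+Q_{N,t,\varkappa}(\zeta))=\mathcal O(N^{-\beta})\), uniformly for \(\abs t\le\delta_N\). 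Then \(\Pi_{N,t}=\mathcal O(N^{-\varkappa-\beta})\), and exponentiating \eqref{eq: app flow} gives \eqref{eq:coro-flow}. Since \(A>0\) is arbitrary, the construction must be independent of \(A\). This holds because the families are Fourier truncations at level \(L_N\), and admissibility is required for every \(A\).

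The \(j\)-th step assumes \(\eta_{N,t,k}\) for \(k\le j\) and \(\nu_{N,k}\) for \(k\le j-1\) are already built. Then \(Q_{N,t,j}\) is a known polynomial expression in \(S_j\), hence lies in \(\sL^{\varkappa-j,\alpha}\), with limit functions \(Q_j^{[s]}\). The first choice is
\[\nu_j\coloneqq-\Herg\bigl[\Re Q_j^{[0]}\bigr]\in C^{\varkappa-j,\alpha}(\mathbb T),\qquad \nu_{N,j}\coloneqq\mathcal F_{L_N}[\nu_j].\]
This has only non-positive Fourier indices, and \(\Re\nu_j=-\Re Q_j^{[0]}\) on \(\mathbb T\). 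The \(t\)-dependence of the \(N^{-j}\)-coefficient is not yet cancelled, so the next choice is
\[\eta_{N,t,j+1}(\zeta)\coloneqq-\Herg\bigl[\Re\bigl(\nu_{N,j}(e^{-t}\cdot)+Q_{N,t,j}\bigr)-\Re\bigl(\nu_{N,j}+Q_{N,0,j}\bigr)\bigr](\zeta).\]
Here \(\Herg\) acts on Laurent polynomials by the finite formula \eqref{eq:herglotz0}. By construction \(\eta_{N,0,j+1}\equiv0\), only non-positive indices appear, and \(\Re\Herg[g]=g\) for real Laurent polynomials \(g\). With these choices the \(N^{-j}\)-bracket reduces exactly to \(\Re(\nu_{N,j}+Q_{N,0,j})\) at \(t=0\). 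On \(\mathbb T\) this equals \(\Re(\nu_{N,j}-\nu_j)+\Re(Q_{N,0,j}-Q_j^{[0]})=\mathcal O(N^{j-\varkappa-\beta})\) by \cref{theo:Bernstein0} and \eqref{eq:AL1}. For \(j=\varkappa\) no \(\eta\) is needed: \eqref{eq:AL2} gives \(\Re(\nu_{N,\varkappa}(e^{-t}\cdot)+Q_{N,t,\varkappa})=\Re(\nu_\varkappa+Q_\varkappa^{[0]})+\mathcal O(N^{-\beta})=\mathcal O(N^{-\beta})\). For \(j=0\), \(Q_{N,t,0}=\tfrac12u_{N,t,0,0}\) and the formula reproduces the construction of \cref{sec:flow-r=0}.

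The main obstacle is regularity. We need \(\eta_{N,t,j+1}\in\sL^{\varkappa-j,\alpha}\), one order better in \(t\) than the input \(Q_{N,t,j}\in\sL^{\varkappa-j,\alpha}\). The gain comes from the normalisation at \(t=0\). I would write the argument of \(\Herg\) as \(\int_0^t\partial_u(\cdots)\,\d u\), and express \(\partial_t\eta_{N,t,j+1}=-\Herg[\Re\partial_t(\nu_{N,j}(e^{-t}\cdot)+Q_{N,t,j})]\). This lies in \(\sL^{\varkappa-j-1,\alpha}\) by \cref{lem:AL-stable}, given that \(\partial_tQ_{N,t,j}\) and \(\partial_t\nu_{N,j}(e^{-t}\cdot)\) do.

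The difficulty is that \(\sL^{\varkappa+1-(j+1),\alpha}\) demands \(\partial_t^s\eta\) for \(s\le\varkappa-j\) with limits in \(C^{\varkappa-j-s,\alpha}\), while \(\eta\) itself has limit \(0\). If the membership bookkeeping via integration in \(t\) does not close directly, I would weaken the requirement to what \cref{theo: asymp flow decomp} actually uses, namely \(\eta_{N,t,j+1}\in\sL^{\varkappa-j,\alpha}\) together with \(\partial_t\eta_{N,t,j+1},\eta'_{N,t,j+1}\in\sL^{\varkappa-j-1,\alpha}\). The latter follow from \cref{lem:AL-stable}. The point to check is that \(S_{j+1}\subset\sL^{\varkappa-j-1,\alpha}\) still holds, so the recursion continues. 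Finally, the degree bounds \(\mathcal O(L_N)\) stay finite because each step multiplies a bounded number of families, and uniformity in \(t\) follows from \eqref{eq:AL2}.
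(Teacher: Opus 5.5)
Your construction is essentially the paper's: \(\nu_j=-\Herg[\Re Q_j^{[0]}]\) is identical. Your \(\eta_{N,t,j+1}\) differs from the paper's \(\Herg[\Re Q_{N,0,j}](e^{-t}\cdot)-\Herg[\Re Q_{N,t,j}]\) only by \(h_N-h_N(e^{-t}\cdot)\), where \(h_N=\Herg[\Re Q_{N,0,j}]+\nu_{N,j}\) is small. Your version lets you bound the residual on \(\mathbb T\) directly instead of on the annulus. The regularity obstacle you anticipate is not real: \(\sL^{\varkappa+1-(j+1),\alpha}=\sL^{\varkappa-j,\alpha}\) is the same class as that of \(Q_{N,t,j}\) and \(\nu_{N,j}(e^{-t}\cdot)\). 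Hence \(\eta_{N,t,j+1}\in\sL^{\varkappa-j,\alpha}\) follows at once from \cref{lem:AL-stable} and closure under sums and real parts, exactly as in the paper, with no integration in \(t\) and no weakening of the requirement.
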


\begin{proof}
Let \(Q_j\) denote the limiting functions of the Laurent families \(Q_{N,t,j}\) constructed in \cref{theo: asymp flow decomp}. Then we set \(\eta_{N,t,0}\coloneqq -t\) and successively define
\begin{align*}
\nu_{N,j}& \coloneqq -\mathcal F_{L_N}[\Herg[\Re Q_j]],&& j=0,\dots, \varkappa,\\
\eta_{N,t,j+1} &\coloneqq \Herg[\Re Q_{N,0,j}](e^{-t}\cdot)-\Herg[\Re Q_{N,t,j}],&&j=0,\dots,\varkappa-1.
\end{align*}
We then have \(\eta_{N,0,j}\equiv 0\) and \( \nu_{N,j},\eta_{N,t,j+1}\in \sL^{\varkappa-j,\alpha}\), since by construction \(Q_{N,t,j}\in\sL^{\varkappa-j,\alpha}\). Note that the functions \(\eta_{N,t,j+1}\) and \(\nu_{N,j}\) are well-defined, since each \(Q_{N,t,j}\) only depends on the previous data \(\eta_{N,t,k}\) for \(k=0,\dots,j\) and \(\nu_{N,\ell}\) for  \(\ell=0,\dots,j-1\). Using \cref{theo:Fourier-Master} we observe for \(j<\varkappa\)
\begin{align*}
\Re\left(\eta_{N,t,j+1}(\zeta)+\nu_{N,j}(e^{-t}\zeta)+Q_{N,t,j}(\zeta)\right)&=\Re\left(\left(\Herg[\Re Q_{N,0,j}]-\mathcal F_{L_N}[\Herg[\Re Q_j]]\right)(e^{-t}\zeta)\right)\\
&=\mathcal O(N^{j-\varkappa-\beta}).
\end{align*}
and using \eqref{eq:AL1} together with \eqref{eq:AL2}, we get
\[\Re\left(\nu_{N,\varkappa}(e^{-t}\zeta)+Q_{N,t,\varkappa}(\zeta)\right)=\Re\left(\nu_{N,\varkappa}(\zeta)+Q_{N,0,\varkappa}(\zeta)\right)+\mathcal O(N^{-\beta})=\mathcal O(N^{-\beta}).\]
We conclude that \(\Pi_{N,t}(\zeta)=\mathcal O(N^{-\varkappa-\beta})\) for every \(\beta\in (0,\alpha)\), and hence \eqref{eq:coro-flow} follows directly from \eqref{eq: app flow}.
\end{proof}

\section{Quasi-Orthogonality and Strong Asymptotics}\label{sec:strong-asymptotics}
Now that we have constructed the model function \(F_N\) and the foliation, we turn to the orthogonal polynomial \(P_N\). Using the density estimate and the disintegration formula, we first prove that \(F_N\) is approximately normalized and quasi-orthogonal on a shrinking boundary layer. Since \(F_N\) is not a polynomial, we then construct a degree-\(N\) Faber polynomial \(B_N\) that approximates \(F_N\) and inherits suitable quasi-orthogonality estimates on the whole domain. These estimates allow us to compare \(B_N\) with \(P_N\) in \(L^2(\mathscr D,\omega)\) and subsequently derive the pointwise asymptotic expansions in the exterior and in a shrinking boundary layer. We begin by establishing two polynomial norm estimates that will be used both in the quasi-orthogonality argument and in the derivation of the pointwise expansions.

\begin{lemma}\label{lem: Bernstein-Walsh alt}
Let \(\mathscr D\) be a Jordan domain with \(C^{1,\alpha}\)-smooth boundary, and \(\omega\) a continuous non-negative function on \(\overline{\mathscr D}\) such that \(\omega>0\) on \(\partial\mathscr D\). Then there exists \(c>0\) such that for any polynomial \(q\)  we have
\begin{equation}\label{eq:lem poly est1}
\max_{z\in \overline{\mathscr D}} \abs{q(z)}\leq c (\deg(q)+1)\norm{q}_{L^2(\mathscr D,\omega)},
\end{equation}
as well as
\begin{equation}\label{eq:lem poly est2}
\int _{\partial \mathscr D} \abs{q(\zeta)}^2 \abs{\d\zeta}\leq c (\deg(q)+1) \norm{q}^2_{L^2(\mathscr D,\omega)}.
\end{equation}
\end{lemma}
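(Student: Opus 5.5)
The argument reduces both estimates to a thin boundary strip on which \(\omega\) is bounded below, and then uses subharmonicity of \(\abs{q}^2\) together with the maximum principle for polynomials.

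\textbf{Step 1: a strip on which \(\omega\) is bounded below.} Since \(\omega\) is continuous and positive on \(\partial\mathscr D\), there is a closed collar \(U\subset\overline{\mathscr D}\) of \(\partial\mathscr D\) with \(\omega\geq c_0>0\) on \(U\). Hence
\[\int_U\abs{q}^2\,\d\A\leq c_0^{-1}\norm{q}^2_{L^2(\mathscr D,\omega)}.\]
It therefore suffices to control \(\max_{\overline{\mathscr D}}\abs q\) and \(\int_{\partial\mathscr D}\abs q^2\abs{\d\zeta}\) by the unweighted \(L^2\)-norm of \(q\) over a one-sided boundary strip of \(\mathscr D\) of fixed width.

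\textbf{Step 2: pass to the exterior and use the maximum principle.} Let \(n=\deg q\) and set \(h\coloneqq q\circ\psi/(\cdot)^n\), where \(\psi=\phi^{-1}\). Then \(h\) is holomorphic on \(\mathbb D^e\), bounded at \(\infty\), and continuous up to \(\mathbb T\). The Bernstein--Walsh lemma (maximum principle applied to \(h\)) gives, for \(\rho>1\),
\[\abs{q(\psi(\rho\zeta))}\leq\rho^n\max_{\partial\mathscr D}\abs q .\]
Thus \(\abs q\leq e\max_{\partial\mathscr D}\abs q\) on the level curve \(\Gamma_{\rho_n}=\psi(\mathbb T_{\rho_n})\) with \(\rho_n=1+1/(n+1)\). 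The strip of width \(\asymp 1/(n+1)\) between \(\partial\mathscr D\) and \(\Gamma_{\rho_n}\) is comparable to a one-sided strip inside \(\mathscr D\), but that inner side is what the \(L^2\)-norm sees, and \(\phi\) is not defined there. To cross to the inside, I reflect with a \(C^{1,\alpha}\) diffeomorphism near \(\partial\mathscr D\). Alternatively, and more simply, I argue on the inside directly, as follows.

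\textbf{Step 3: sup bound (main obstacle).} Let \(z_0\in\partial\mathscr D\) be a point where \(\abs q\) attains its maximum \(M\) over \(\overline{\mathscr D}\). The key claim is that \(\abs q\geq M/2\) on a disk \(\Delta\) of radius \(\asymp 1/(n+1)\) contained in \(U\cap\mathscr D\) and located near \(z_0\). Given the claim,
\[M^2(n+1)^{-2}\lesssim\int_\Delta\abs q^2\,\d\A\lesssim\norm q^2_{L^2(\mathscr D,\omega)},\]
which is \eqref{eq:lem poly est1}. To prove the claim I use a Markov-type bound \(\abs{q'}\leq Cn M\) on the disk of radius \(\asymp 1/n\) around \(z_0\); this follows from Cauchy's estimate combined with the exterior bound \(\abs q\leq eM\) on the enlarged region from Step 2. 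The \(C^{1,\alpha}\)-smoothness of the boundary (in fact a uniform interior cone condition suffices) guarantees that a disk of radius \(\asymp1/n\) fits inside \(\mathscr D\) within distance \(\asymp1/n\) of \(z_0\), with constants uniform in \(z_0\). Shrinking the constants ensures \(\abs{q-q(z_0)}\leq M/2\) there.

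\textbf{Step 4: boundary \(L^2\) bound.} For \(\zeta\in\partial\mathscr D\), let \(\Delta_\zeta\subset\mathscr D\) be a disk of radius \(\asymp1/(n+1)\) at distance \(\asymp1/(n+1)\) from \(\zeta\), positioned along the inner normal. Since \(q'\) is again a polynomial, I bound \(\abs{q(\zeta)}\) by the mean of \(\abs q\) over \(\Delta_\zeta\) plus a derivative correction controlled via \(\abs{q'}\lesssim n\abs q\) on the nearby region. Applying the sub-mean-value property of \(\abs q^2\) yields
\[\abs{q(\zeta)}^2\lesssim(n+1)^2\int_{\Delta_\zeta^*}\abs q^2\,\d\A\]
for a slightly larger disk \(\Delta_\zeta^*\). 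Integrating over \(\zeta\in\partial\mathscr D\), each point lies in \(\Delta_\zeta^*\) only for \(\zeta\) in an arc of length \(\lesssim1/(n+1)\), so Fubini gives
\[\int_{\partial\mathscr D}\abs q^2\abs{\d\zeta}\lesssim(n+1)\int_U\abs q^2\,\d\A,\]
which is \eqref{eq:lem poly est2}.

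The delicate point is Step 4's derivative correction, since it must be \(L^2\)-type rather than sup-type. I would handle it by observing that \(\abs{q(\zeta)}^2\) is dominated by the average of \(\abs q^2\) over a disk centered at a point of \(\Gamma_{\rho_n}\)-type exterior distance \(\asymp1/n\). That disk straddles \(\partial\mathscr D\), and the exterior part is controlled by Step 2 applied to \(h\) with its Hardy-space \(L^2\) norm on circles \(\mathbb T_\rho\), which is increasing in \(\rho\). This reduces the exterior contribution to \(\int_{\partial\mathscr D}\abs q^2\abs{\d\zeta}\) times a small constant that can be absorbed into the left-hand side.
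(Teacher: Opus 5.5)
Your Step~3 is correct and gives a self-contained proof of \eqref{eq:lem poly est1}. Since \(\phi'\) is bounded up to \(\partial\mathscr D\), the set \(\{1<\abs{\phi}\le 1+\frac{1}{n+1}\}\) contains an exterior Euclidean collar of width \(\asymp 1/n\), so Bernstein--Walsh, Cauchy's estimate and an interior disk of radius \(\asymp 1/n\) finish the argument. The paper argues differently. It shows \(\int_{\mathscr D}\abs{q}^2\,\d\A\lesssim\norm{q}^2_{L^2(\mathscr D,\omega)}\) via an interior conformal map and the coefficient inequality \(\int_{\rho<\abs{z}<1}\abs{g}^2\,\d\A\ge(1-\rho^2)\int_{\mathbb D}\abs{g}^2\,\d\A\) for holomorphic \(g\), and then quotes Suetin's unweighted estimates for both inequalities.

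The genuine gap is in Step~4, i.e.\ \eqref{eq:lem poly est2}. Your first variant uses \(\abs{q'}\lesssim n\abs{q}\), which is false pointwise (take a point near a zero of \(q\)). Replacing it by \(\abs{q'}\lesssim n\max\abs{q}\) only yields the factor \(n^2\).

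In the absorption variant, the exterior contribution is not a small multiple of \(B\coloneqq\int_{\partial\mathscr D}\abs{q}^2\abs{\d\zeta}\). A small disk centred at \(\zeta\in\partial\mathscr D\) has about half its area outside \(\mathscr D\), whatever its radius. So already for \(\mathscr D=\mathbb D\), \(q=z^n\), the exterior part is \(\approx\frac12 B\), for every choice of the constant in \(r\asymp 1/n\). Absorption needs a proven bound \(\kappa B\) with \(\kappa<1\), and your tools do not give one. With a uniform radius \(r\), the exterior parts of the disks are only known to lie in \(\{1<\abs{\phi}<1+r\norm{\phi'}_\infty\}\). The global circle-mean bound (which, for \(h\), is non-increasing rather than increasing in the radius) then produces a \(\kappa\) that grows with the distortion \(\max\abs{\psi'}/\min\abs{\psi'}\), roughly \(\frac{2}{\pi}\) times it at best, and this need not be below one.

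One way to close the gap uses radii adapted to the conformal map:
\begin{enumerate}
\item Take \(r(\zeta)=c/(n\abs{\phi'(\zeta)})\), so that all exterior parts lie in \(\{1<\abs{\phi}<1+c(1+o(1))/n\}\).
\item Note that \(H(w)=q(\psi(w))\sqrt{\psi'(w)}\,w^{-n}\) is holomorphic and bounded on \(\mathbb D^e\), whence \(\int_{\psi(\mathbb T_s)}\abs{q}^2\abs{\d z}\le s^{2n+1}B\).
\item Use the arc-length bound \(2r(1+o(1))\) for \(\{\zeta\in\partial\mathscr D:\abs{\zeta-w}<r\}\) and the uniform continuity of \(\psi'\).
\end{enumerate}
Fubini then bounds the exterior contribution by \(\frac{2}{\pi}e^{2c}(1+o(1))B\). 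This is absorbable for small \(c\) and large \(n\); bounded \(n\) follow from finite dimensionality of \(\mathrm{Pol}_n\). Alternatively, reduce to the unweighted case as the paper does.
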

\begin{proof}
The estimates \eqref{eq:lem poly est1} and \eqref{eq:lem poly est2} are well-known in the special case \(\omega\equiv1\), see \cite{suetin1966ordercomparison} and \cite[Eqs.~(2.29)--(2.30)]{suetin1974polynomials}. Let \(\psi\colon\mathbb D\to\mathscr D\) be a conformal map. By boundary regularity and the positivity of \(\omega\) on \(\partial\mathscr D\), there exist \(\rho\in(0,1)\) and \(c>0\) such that \(\omega(\psi(z))\tabs{\psi'(z)}^2\geq c\) for \(\rho\leq\tabs{z}\leq1\). Expanding \(q\circ\psi\) into its Taylor series and changing variables gives
\[\int_{\mathscr D}\abs{q}^2\d\A\leq\frac{\norm{\psi'}_{L^\infty(\mathbb D)}^2}{c(1-\rho^2)}\int_{\mathscr D}\abs{q}^2\omega\,\d\A.\]
Combining this inequality with the unweighted estimates completes the proof.
\end{proof}

\subsection{Quasi-Orthogonality Near the Boundary} \label{sec:quasi-ortho}
It turns out that the foliated domain \(\mathscr E_N=\phi_N^{-1}(\mathcal E_N)\) is too large to obtain a useful bound on the \(L^2\)-norm of \(F_N\).  Instead, we introduce the smaller domain
\begin{equation}\label{eq:def-E_N'}
\mathscr E_N'\coloneqq\phi_N^{-1}(\mathcal E_N'),
\qquad
\mathcal E_N'\coloneqq\bigcup_{t\in(-\delta_N',\delta_N)}\psi_{N,t}(\mathbb T),
\end{equation}
where
\begin{equation}\label{eq:def-delta'}
\delta_N' = \frac{(\log N)^{\varkappa+3+\alpha}}{N^{\varkappa+1+\alpha}}.
\end{equation}
Essentially, we are only interested in the curves for \(t\geq 0\), however, in order to include the boundary \(\partial \mathscr D\) we need to consider \(t\) also for small negative values.
The choice of \(\delta_N'\) precisely ensures that \(\partial\mathscr D\subset\mathscr E_N'\) while keeping \(\mathscr E_N'\) sufficiently small. This property is useful in the proofs of \cref{prop: quasi ortho,prop: quasi ortho Faber} when treating the sets \(\mathscr D\cap\mathscr E_N'\) and \(\mathscr E_N'\setminus\mathscr D\).

\begin{lemma}\label{lem:inclusion-of-domains}
Let \(\mathscr E_N'\) and \(\delta_N'\) be as in \eqref{eq:def-E_N'} and \eqref{eq:def-delta'}, and set \(\mathscr A_N \coloneqq \mathscr D_{R_N}\setminus \mathscr D\) for \(\mathscr D_{R_N}\coloneqq \mathrm{Int}(\phi^{-1}(\mathbb T_{R_N}))\) and \(R_N = e ^{2 \delta_N'}\). Then, for all sufficiently large \(N\), we have \(\partial\mathscr D\subset\mathscr E_N'\) and \(\mathscr E_N'\setminus\mathscr D\subset\mathscr A_N\).
\end{lemma}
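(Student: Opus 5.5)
The plan is to work entirely in the conformal coordinate \(w=\phi_N(z)\) and to compare two small quantities. The first is the deviation \(\phi_N-\phi\) on \(\mathscr D^c\). The second is the thickness \(\delta_N'\) of the part of the foliation that lies outside \(\mathbb T\).

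\emph{Sharpened rate for \(\phi_N-\phi\).} The rate \(\mathcal O(N^{-\varkappa-1-\beta})\), \(\beta<\alpha\), from \cref{theo:uni-approx-phi} is not quite \(o(\delta_N')\). So I first redo its proof using the sharp bound of \cref{theo:Bernstein0} applied to \(\psi\in C^{\varkappa+1,\alpha}(\mathbb T)\) with truncation order \(L_N\):
\[\varepsilon_N=\norm{\psi-\psi_N}_{L^\infty(\mathbb T)}\le c\,\frac{\log L_N}{L_N^{\varkappa+1+\alpha}}=\mathcal O\Bigl(\frac{(\log N)^{\varkappa+2+\alpha}}{N^{\varkappa+1+\alpha}}\Bigr)=o(\delta_N').\]
The Koebe quarter argument in the proof of \cref{theo:uni-approx-phi} is unchanged, and together with the maximum principle it gives \(\norm{\phi_N-\phi}_{L^\infty(\mathscr D^c)}=\mathcal O(\varepsilon_N)=o(\delta_N')\). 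This is exactly where the exponent \(\varkappa+3+\alpha\) in \(\delta_N'\) is used, and it is the main point of the proof. By \cref{lem:min-curve}, \(\phi_N\) is defined on all of \(\mathscr D^c\).

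\emph{Radial structure of the foliation.} Next I recall from the proof of the lemma showing that \(\Psi_N\) is a diffeomorphism that, along each ray of angle \(\vartheta\), the curve \(\psi_{N,t}(\mathbb T)\) meets the ray exactly once, at the radius \(r_{N,t}(\vartheta)\). This radius satisfies \(r_{N,0}(\vartheta)=1\), because \(\eta_{N,0,j}\equiv0\), and
\[\partial_t\log r_{N,t}(\vartheta)=-1+\mathcal O(N^{-1})\]
uniformly. Integrating in \(t\) gives \(\log r_{N,t}(\vartheta)=-t\,(1+\mathcal O(N^{-1}))\). So \(r_{N,t}(\vartheta)\) is continuous and strictly decreasing in \(t\). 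Over \(t\in(-\delta_N',\delta_N)\) it sweeps the open interval between \(e^{\delta_N'(1+\mathcal O(N^{-1}))}\) and \(e^{-\delta_N(1+\mathcal O(N^{-1}))}\). Hence \(\mathcal E_N'\) contains the round annulus \(\{e^{-\delta_N/2}<\tabs w<e^{\delta_N'/2}\}\) and is contained in \(\{\tabs w<e^{\delta_N'(1+\mathcal O(N^{-1}))}\}\).

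\emph{First inclusion, \(\partial\mathscr D\subset\mathscr E_N'\).} Let \(z\in\partial\mathscr D\) and put \(w=\phi_N(z)\). Since \(\tabs{\phi(z)}=1\), the sharpened rate gives \(\tabs w=1+o(\delta_N')\), so \(w\) lies in the round annulus above for large \(N\). Thus \(w\in\mathcal E_N'\), and \(\phi_N^{-1}\) is conformal there by \cref{theo:conf-inverse}, so \(z\in\mathscr E_N'\).

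\emph{Second inclusion, \(\mathscr E_N'\setminus\mathscr D\subset\mathscr A_N\).} Let \(z\in\mathscr E_N'\setminus\mathscr D\). Then \(z\in\mathscr D^c\) and \(w=\phi_N(z)\in\mathcal E_N'\), so \(\tabs{\phi_N(z)}<e^{\delta_N'(1+\mathcal O(N^{-1}))}\). Using \(z\in\mathscr D^c\) and the sharpened rate,
\[\tabs{\phi(z)}\le\tabs{\phi_N(z)}+o(\delta_N')<e^{2\delta_N'}=R_N\]
for large \(N\). Since \(\phi\) maps \(\mathscr D^e\) onto \(\mathbb D^e\) and \(\partial\mathscr D\) onto \(\mathbb T\), the condition \(\tabs{\phi(z)}<R_N\) with \(z\notin\mathscr D\) means \(z\in\mathscr D_{R_N}\setminus\mathscr D=\mathscr A_N\). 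This completes the plan.
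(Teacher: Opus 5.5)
Your proof is correct, but it handles the geometry by a different route from the paper's.

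\textbf{The paper's route.} The paper stays in the physical plane. It expands the leaves to first order in $t$,
\[\phi_N^{-1}(\psi_{N,t}(\zeta))=\phi^{-1}(\zeta)-t\zeta(\phi^{-1})'(\zeta)+o(\abs{t}),\qquad t\in\{\delta_N,-\delta_N'\}.\]
This expansion needs several inputs:
\begin{itemize}
\item $\eta_{N,t,j}=\mathcal O(t)$;
\item a Taylor expansion of $\phi_N^{-1}$, with Hölder control of $(\phi_N^{-1})'$ on the annulus;
\item the sharp Fourier rate both for $\phi_N^{-1}-\phi^{-1}$ and for its derivative.
\end{itemize}
From this the paper concludes that the two extreme leaves lie on opposite sides of $\partial\mathscr D$. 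It then obtains $\partial\mathscr D\subset\mathscr E_N'$ ``by continuity'' and states that the second inclusion follows similarly.

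\textbf{Your route.} You pull everything back by $\phi_N$. Your only analytic input is the $C^0$ bound
\[\norm{\phi_N-\phi}_{L^\infty(\mathscr D^c)}=\mathcal O\bigl((\log N)^{\varkappa+2+\alpha}N^{-\varkappa-1-\alpha}\bigr)=o(\delta_N').\]
This is the same logarithmic bookkeeping the paper implicitly relies on; you are right that the stated rate $N^{-\varkappa-1-\beta}$ of \cref{theo:uni-approx-phi} would not suffice. You combine this bound with the ray-by-ray monotonicity of $r_{N,t}(\vartheta)$, which was already established in the diffeomorphism lemma.

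\textbf{Comparison.} Your route makes the topological step fully explicit: a round annulus $\{e^{-\delta_N/2}<\abs{w}<e^{\delta_N'/2}\}$ sits inside $\mathcal E_N'$, and an explicit outer radius bound gives the second inclusion through $\abs{\phi(z)}<R_N$. It also uses fewer estimates, since it needs no derivative approximation and no Hölder bound on $(\phi_N^{-1})'$. The paper's route, in exchange, describes where the leaves sit in the $z$-plane: they are approximately normal displacements of $\partial\mathscr D$ by $\abs{t}\,\abs{(\phi^{-1})'}$. That is more geometric information than the lemma itself requires.
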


\begin{proof}
As before, let \(\delta_N = \frac{A\log N}{N}\) for some \(A>0\). Further, denote by \(\eta_{N,t,j}\in \sL^{\varkappa+1-j,\alpha}\) the components of the foliation \(\psi_{N,t}\) as defined in \eqref{eq:def foliation mapping}. We recall that \(\eta_{N,0,j}\equiv 0\), and therefore we have
\[\eta_{N,t,j}(\zeta) = \int_0^t \eta^{[1]}_{N,x,j}(\zeta )\d x = t \int _0^1 \eta_{N,tx,j}^{[1]}(\zeta)\d x = \mathcal O(t),\qquad \abs{t}\leq \delta_N,\]
where \(\eta_{N,t,j}^{[1]}(\zeta)\coloneqq\partial_t \eta_{N,t,j}(\zeta)\). Note that we have used the uniform boundedness of the admissible Laurent families. Consequently it follows
\[\psi_{N,t}(\zeta) = e^{-t}\zeta+ \mathcal O(\abs tN^{-1}),\qquad \abs{t}\leq \delta_N.\]
Using the uniform boundedness of \((\phi_N^{-1})'\) on the corresponding annular region, Taylor's theorem provides for \(\abs{t}\leq	 \delta_N\)
\[\phi_N^{-1}(\psi_{N,t}(\zeta)) = \phi_N^{-1}(e^{-t}\zeta)+\mathcal O(\abs tN^{-1}).\]
Further, applying the Hölder estimate we observe 
\begin{align*}
\phi_N^{-1}(e^{-t}\zeta)&=\phi_N^{-1}(\zeta) -t\zeta(\phi_N^{-1})'(\zeta) - \zeta\int_0^t e^{-x} (\phi_N^{-1})'(e^{-x}\zeta)- (\phi_N^{-1})'(\zeta)\d x\\
&=\phi_N^{-1}(\zeta) -t\zeta(\phi_N^{-1})'(\zeta)+ \mathcal O\biggl(\norm{(\phi_N^{-1})'}_{C^{0,\alpha/2}(\cA_N)}\abs{t}^{1+\alpha/2}\biggr),
\end{align*}
where \(\cA_N= \{z\,|\,\tabs{z}\in(e^{-\delta_N},e^{\delta_N})\}\). Note that \cref{theo:Fourier-Master} ensures that the Hölder norm is uniformly bounded, hence we conclude
\begin{align*}
\phi_N^{-1}(\psi_{N,t}(\zeta)) &= \phi_N^{-1}(e^{-t}\zeta)+\mathcal O(\abs tN^{-1})\\
&=\phi_N^{-1}(\zeta) -t\zeta(\phi_N^{-1})'(\zeta)+ \mathcal O(\abs t^{1+\alpha/2}+\abs tN^{-1})\\
&=\phi^{-1}(\zeta) -t\zeta(\phi^{-1})'(\zeta)\\
&\quad+ \mathcal O\biggl(\abs{t}^{1+\alpha/2}+\abs{t}N^{-1}+\abs t\frac{(\log N)^{\varkappa+1+\alpha}}{N^{\varkappa+\alpha}}+\frac{(\log N)^{\varkappa+2+\alpha}}{N^{\varkappa+1+\alpha}}\biggr).
\end{align*}
We identify \(\zeta(\phi^{-1})'(\zeta)\) as the outward normal vector of \(\phi^{-1}\) at \(\zeta\in\mathbb T\). Moreover, for \(t\in\{\delta_N,-\delta_N'\}\), the asymptotic error is negligible compared with \(\abs{t\zeta(\phi^{-1})'(\zeta)}\). Indeed, \((\phi^{-1})'\) does not vanish on \(\mathbb T\) and is therefore uniformly bounded away from zero there. Consequently, for sufficiently large \(N\)
\[\phi_N^{-1}(\psi_{N,\delta_N}(\mathbb T))\subset \mathscr D,\qquad \phi_N^{-1}(\psi_{N,-\delta_N'}(\mathbb T))\subset \mathscr D^c.\]
Thus, the two boundary curves lie on opposite sides of \(\partial\mathscr D\). By continuity of the foliation, it follows that \(\partial\mathscr D\subset\mathscr E_N'\). Similarly, we obtain \(\mathscr E_N'\setminus\mathscr D\subset\mathscr A_N\).
\end{proof}

\begin{proposition}\label{prop: quasi ortho}
Let \(A\geq \frac{\varkappa+\alpha}{2}\) and let \(\mathscr E_N'\) be as in \eqref{eq:def-E_N'}. Then for every \(\beta \in (0,\alpha)\) we have \(\tnorm{ F_N}_{L^2(\mathscr D\cap\mathscr E_N',\omega)}=1+\mathcal O(N^{-\varkappa-\beta})\) and
\[\int_{\mathscr D\cap\mathscr E_N'} q(z)\overline{F_N(z)}\omega(z)\d\A(z)=\mathcal{O}\left(N^{-\varkappa-\beta}\norm{ q}_{L^2( \mathscr D,\omega)}\right),\quad q\in \mathrm{Pol}_{N-1},\ N\to\infty.\]
\end{proposition}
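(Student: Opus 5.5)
The plan is to use the disintegration identity \eqref{eq: disintegration} on the foliated region \(\mathscr E_N'\) together with the density estimate of \cref{col: flow goal}. I will then control the error made by replacing \(\mathscr D\cap\mathscr E_N'\) with the full foliated region \(\mathscr E_N'\).

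\textbf{Step 1: setup.} Write \(q\in\mathrm{Pol}_{N-1}\) as \(q=\Lambda_N[\tilde q]\) with \(\tilde q(w)=q(\phi_N^{-1}(w))\,(\phi_N^{-1})'(w)\,w^{-N}\). This \(\tilde q\) is holomorphic on \(\mathcal E_N'\) and in the exterior of \(\psi_{N,\delta_N}(\mathbb T)\), and it vanishes at infinity. The reason is that \(\phi_N^{-1}(w)\sim w/\phi'(\infty)\) and \(q\) has degree at most \(N-1\), so \(\tilde q(w)=\mathcal O(w^{-1})\). Since \(f_N\) is zero-free and finite at infinity, \(\tilde q/f_N\) is holomorphic on \(\mathbb D_\rho^e\) for each \(\rho\) and vanishes at infinity. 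Cauchy's theorem then gives \(\int_{\mathbb T}\frac{\tilde q}{f_N}(\psi_{N,t}(\zeta))\,\abs{\d\zeta}=0\) for every \(t\), where I use that \(\psi_{N,t}\) is conformal on \(\mathbb D^c\) and \(\psi_{N,t}(\infty)=\infty\). Writing \(\Upsilon_{N,t}=c_t(1+\epsilon_{N,t})\) with \(c_t=\frac1\pi(N+1)e^{-2(N+1)t}\) and \(\epsilon_{N,t}=\mathcal O(N^{-\varkappa-\beta})\), the identity \eqref{eq: disintegration} reduces the inner product over \(\mathscr E_N'\) to
\[\int_{-\delta_N'}^{\delta_N}c_t\int_{\mathbb T}\frac{\tilde q}{f_N}(\psi_{N,t}(\zeta))\,\epsilon_{N,t}(\zeta)\,\abs{\d\zeta}\,\d t.\]
Applying Cauchy--Schwarz with weight \(\Upsilon_{N,t}\), this is bounded by \(\mathcal O(N^{-\varkappa-\beta})\,\tnorm{q}_{L^2(\mathscr E_N',\omega)}\,\tnorm{F_N}_{L^2(\mathscr E_N',\omega)}\). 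Here I use \(\abs{\tilde q/f_N}^2\Upsilon_{N,t}=\abs{q}^2\omega J\) and \(\Upsilon_{N,t}=\abs{F_N}^2\omega J\) in the pulled-back coordinates.

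\textbf{Step 2: the norm.} Taking \(q=F_N\) in the same disintegration gives
\[\tnorm{F_N}^2_{L^2(\mathscr E_N',\omega)}=\int_{-\delta_N'}^{\delta_N}2c_t\,(1+\mathcal O(N^{-\varkappa-\beta}))\,\d t=\bigl(e^{2(N+1)\delta_N'}-e^{-2(N+1)\delta_N}\bigr)(1+\mathcal O(N^{-\varkappa-\beta})).\]
The first exponential term is \(1+\mathcal O(N\delta_N')=1+\mathcal O((\log N)^{\varkappa+3+\alpha}N^{-\varkappa-\alpha})\), which is within the required error for \(\beta<\alpha\). The second term is \(\mathcal O(N^{-2A})\le \mathcal O(N^{-\varkappa-\alpha})\), and this is exactly where the hypothesis \(A\ge\frac{\varkappa+\alpha}{2}\) enters.

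\textbf{Step 3: removing the outer strip.} It remains to remove \(\mathscr E_N'\setminus\mathscr D\), which by \cref{lem:inclusion-of-domains} lies in \(\mathscr A_N\).

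For \(F_N\), I pull back by \(\phi_N\). The set \(\phi_N(\mathscr E_N'\setminus\mathscr D)\) sits in a ring of width \(\mathcal O(\delta_N')\) around \(\mathbb T\), on which \(\abs{f_N}^2\abs{w}^{2N}\omega\) is \(\mathcal O(N)\). Its contribution to the squared norm is therefore \(\mathcal O(N\delta_N')\), which is negligible, so \(\tnorm{F_N}_{L^2(\mathscr D\cap\mathscr E_N',\omega)}=1+\mathcal O(N^{-\varkappa-\beta})\).

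For \(q\), \(\omega\) may vanish inside \(\mathscr D\) and is only defined near \(\overline{\mathscr D}\), so I instead bound \(\abs{q}\) on \(\mathscr A_N\). By Bernstein--Walsh in exterior conformal coordinates, \(\abs{q(z)}\le \abs{\phi(z)}^{N}\max_{\overline{\mathscr D}}\abs q\le e^{2N\delta_N'}\,c\,N\,\tnorm{q}_{L^2(\mathscr D,\omega)}\), using \eqref{eq:lem poly est1}. Since \(\abs{\mathscr A_N}=\mathcal O(\delta_N')\) and \(\abs{F_N}^2=\mathcal O(N)\) there, Cauchy--Schwarz gives a contribution of order
\[N\cdot N^{1/2}\,\delta_N'\,\tnorm{q}_{L^2(\mathscr D,\omega)}=\mathcal O\bigl(N^{3/2-\varkappa-1-\alpha}(\log N)^{\varkappa+3+\alpha}\bigr)\tnorm{q}_{L^2(\mathscr D,\omega)}.\]
This is too crude, so a sharper bound is needed. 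I would use \eqref{eq:lem poly est2} and integrate along normal segments: for \(z\in\mathscr A_N\) the segment has length \(\mathcal O(\delta_N')\), which gives \(\int_{\mathscr A_N}\abs q^2\,\d\A\lesssim \delta_N'\int_{\partial\mathscr D}\abs q^2\,\abs{\d\zeta}\lesssim \delta_N' N\tnorm q^2\). The contribution then becomes \(\mathcal O\bigl((N\delta_N')\tnorm q\bigr)=\mathcal O(N^{-\varkappa-\beta})\tnorm q\). The same estimate also controls \(\tnorm{q}_{L^2(\mathscr E_N',\omega)}\lesssim\tnorm q_{L^2(\mathscr D,\omega)}\) in Step 1.

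The main obstacle is this strip estimate, specifically obtaining the sharp factor \(N\delta_N'\) rather than a cruder power. I would try the boundary \(L^2\) estimate \eqref{eq:lem poly est2} combined with subharmonicity of \(\abs q^2\) on disks of radius \(\asymp\delta_N'\).
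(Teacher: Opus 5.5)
Your structure matches the paper's: the disintegration \eqref{eq: disintegration}, Cauchy's theorem to remove the main part \(\frac1\pi(N+1)e^{-2(N+1)t}\) of \(\Upsilon_{N,t}\), the density bound of \cref{col: flow goal}, the norm computation where \(A\ge\frac{\varkappa+\alpha}{2}\) enters, and removal of \(\mathscr E_N'\setminus\mathscr D\subset\mathscr A_N\) using \(\abs{F_N}^2=\mathcal O(N)\) there.

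The gap is the step you flag yourself, namely the strip bound \(\int_{\mathscr A_N}\abs q^2\,\d\A\lesssim\delta_N'\int_{\partial\mathscr D}\abs q^2\abs{\d\zeta}\). Both your strip contribution and your claim \(\tnorm{q}_{L^2(\mathscr E_N',\omega)}\lesssim\tnorm q_{L^2(\mathscr D,\omega)}\) in Step~1 rest on it. The crude pointwise bound gives the latter only when \(\varkappa+\alpha>1\).

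Neither of your suggested tools proves this bound:
\begin{itemize}
\item ``Integrating along normal segments'' presupposes that values of \(q\) at distance \(\lesssim\delta_N'\) outside \(\partial\mathscr D\) are controlled by its values on \(\partial\mathscr D\). That is the entire content of the claim.
\item The sub-mean-value inequality on disks of radius \(\asymp\delta_N'\) bounds \(\abs{q(z)}^2\) by averages over disks that reach into the very strip you are trying to control.
\end{itemize}
More fundamentally, the bound is false without the degree restriction. For \(\mathscr D=\mathbb D\) and \(q=z^M\),
\[\int_{1<\abs z<R_N}\abs z^{2M}\,\d\A=\frac{R_N^{2M+2}-1}{M+1},\]
which is not \(\mathcal O(\delta_N')\) once \(M\delta_N'\to\infty\). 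Any proof must therefore use \(\deg q\le N-1\) together with \(N\delta_N'\to0\), and it must do so globally in the exterior. A purely local argument at scale \(\delta_N'\) cannot see the degree.

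The missing idea, which is what the paper uses, is to work on the level curves \(\Gamma_t=\phi^{-1}(\mathbb T_t)\) for \(1\le t\le R_N=e^{2\delta_N'}\).
\begin{enumerate}
\item On \(\mathbb D^e\) one has \(q(\phi^{-1}(w))=\sum_{k\le N-1}a_kw^k\), hence
\[\int_{\mathbb T}\abs{q(\phi^{-1}(t\zeta))}^2\abs{\d\zeta}=2\pi\sum_{k\le N-1}\abs{a_k}^2t^{2k}\le R_N^{2N}\int_{\mathbb T}\abs{q(\phi^{-1}(\zeta))}^2\abs{\d\zeta},\]
and \(R_N^{2N}=e^{4N\delta_N'}=\mathcal O(1)\). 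Equivalently, \(w^{-N}q(\phi^{-1}(w))\) is holomorphic at infinity, so its \(L^2\) means over \(\mathbb T_t\) are non-increasing in \(t\). This global subharmonicity is what replaces your local one.
\item Since \(\abs{\phi'}\) is bounded above and below near \(\partial\mathscr D\), this gives \(\tnorm q_{L^2(\Gamma_t)}\lesssim\tnorm q_{L^2(\partial\mathscr D)}\) uniformly in \(t\in[1,R_N]\).
\item The co-area formula \(\int_{\mathscr A_N}\abs q^2\,\d\A=\frac1\pi\int_1^{R_N}\int_{\Gamma_t}\abs q^2\abs{\phi'}^{-1}\abs{\d z}\,\d t\), combined with \eqref{eq:lem poly est2}, yields
\[\int_{\mathscr A_N}\abs q^2\,\d\A\lesssim(R_N-1)\,N\tnorm q^2_{L^2(\mathscr D,\omega)}\lesssim N\delta_N'\tnorm q^2_{L^2(\mathscr D,\omega)}.\]
\end{enumerate}
With this in hand, your Cauchy--Schwarz estimate gives a strip contribution of \(\mathcal O(N\delta_N')\tnorm q_{L^2(\mathscr D,\omega)}=\mathcal O(N^{-\varkappa-\beta})\tnorm q_{L^2(\mathscr D,\omega)}\), and the rest of your argument goes through. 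The paper instead uses the \(L^1\) variant \(\sqrt N\int_{\mathscr A_N}\abs q\,\d\A\) and splits \(\int_{\mathscr E_N'}\abs{qF_N}\omega\,\d\A\) rather than bounding \(\tnorm q_{L^2(\mathscr E_N',\omega)}\); that difference is cosmetic.
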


\begin{proof}
We follow the ideas of \cite[Prop. 5.1.1]{hedenmalm2024riemann}. 
We begin by analyzing \(\tnorm{F_N}_{L^2(\mathscr D\cap \mathscr E_N',\omega)}\) by splitting the norm
\[\norm{F_N}^2_{L^2(\mathscr D\cap \mathscr E_N',\omega)}=\norm{F_N}^2_{L^2(\mathscr E_N',\omega)}-\norm{F_N}^2_{L^2(\mathscr E_N'\setminus \mathscr D,\omega)}.\]
Using the disintegration identity \eqref{eq: disintegration} and the isometric property of \(\Lambda_N\) we observe for any \(\beta\in(0,\alpha)\)
\begin{align*}
\norm{F_N}_{L^2(\mathscr E_N',\omega)}^2&= \int_{\mathscr E_N'}\abs{F_N(z)}^2\omega(z)\,\d \A(z)\\
&=\int_{-\delta_N'}^{\delta_N}\int_\mathbb T\Upsilon_{N,t}(\zeta)\abs{\d\zeta}\d t\\
&=2(N+1) \int_{-\delta_N'}^{\delta_N} e^{-2(N+1)t}\d t\left( 1 +\mathcal O(N^{-\varkappa-\beta})\right)\\
&=\left(e^{\frac{2(N+1)(\log N)^{\varkappa+3+\alpha}}{N^{\varkappa+1+\alpha}}}- e^{-\frac{2A(N+1)\log N}{N}}\right)\left(1+\mathcal O(N^{-\varkappa-\beta})\right)\\
&=1+\mathcal O(N^{-\varkappa-\beta}),
\end{align*}
where we have used \(A\geq \frac{\varkappa+\alpha}{2}\) and that due to \cref{col: flow goal} the density of the flow satisfies \(\Upsilon_{N,t}(\zeta)=\pi^{-1}(N+1) e^{-2(N+1)t}(1+\mathcal O(N^{-\varkappa-\beta}))\). Further, let \(\mathscr A_N = \mathscr D_{R_N}\setminus \mathscr D\) for \(R_N = e ^{2 \delta_N'}\), then  \cref{lem:inclusion-of-domains} ensures that for sufficiently large \(N\) we have \(\mathscr E_N'\setminus\mathscr D\subset \mathscr A_N\) and \(\mathrm{Area}(\mathscr A_N) = \mathcal O(N^{-\varkappa-1-\beta})\). In addition, \(F_N\) satisfies \(F_N=\mathcal O(\sqrt{N})\) uniformly on \(\mathscr A_N\) as \(N\to\infty\). We conclude \(\tnorm{F_N}^2_{L^2(\mathscr E_N'\setminus \mathscr D,\omega)}=\mathcal O(N^{-\varkappa-\beta})\), which proves the first statement of the proposition.

Turning now to the orthogonality condition, we again begin by splitting
\begin{align*}
\int_{\mathscr  D\cap \mathscr E_N'}q(z)\overline{F_N(z)}\omega(z)\,\d\A(z)&=\int_{\mathscr E_N'}q(z)\overline{F_N(z)}\omega(z)\,\d\A(z)-\int_{\mathscr E_N'\setminus \mathscr D}q(z)\overline{F_N(z)}\omega(z)\,\d\A(z)\\
& \eqqcolon I_1 - I_2.
\end{align*}
Let us focus on \(I_2\) first. Clearly, 
\[\bigg|\int_{\mathscr E_N'\setminus\mathscr D}q(z)\overline {F_N(z)} \omega(z)\,\d\A(z)\bigg|\leq \int_{\mathscr E_N'\setminus\mathscr D}\abs{q(z)\overline {F_N(z)} \omega(z)}\d\A(z).\]
Let \(\Gamma_t = \phi^{-1}(\mathbb T_t)\) and \(c_\infty=\tnorm{(\phi^{-1})'}_{L^\infty(\mathbb T)}\), then for some \(c>0\) 
\begin{align*}
\int_{\mathscr E_N'\setminus\mathscr D}\abs{q\overline {F_N} \omega}\d\A&\leq c \sqrt{N}\int _{\mathscr A_N} \abs{q} \d\A\\
&=\frac{c\sqrt{N}}{\pi}\int_1^{R_N} \int_{\Gamma_{t}} \frac{\abs{q(z)}}{\abs{\phi'(z)}}\abs{\d z}\d t\\
& \leq \frac{c c_\infty\sqrt{N}}{\pi}\int_1 ^{R_N} \norm{q}_{L^2(\Gamma_{t})}\d t,
\end{align*}
where we have used the Cauchy-Schwarz inequality together with \(\tnorm{1}_{L^2(\Gamma_t)}=\mathcal O(1)\) as \(N\to\infty\). Further, we can rewrite \(q(\phi^{-1}(z))=\sum_{k=-\infty}^{N-1}a_k z^k\) and observe for some \(c'>0\)
\begin{align*}
\int_{\Gamma_t} \abs{q(z)}^2 \abs{\d z}&\leq c_\infty t \int _\mathbb T \abs{q(\phi^{-1}(tz))}^2 \abs{\d z}\\
&= 2\pi c_\infty \sum_{k = -\infty}^{N-1}\abs{a_k}^2t^{2k+1}\leq c' \norm{q}_{L^2(\partial \mathscr D)}^2,
\end{align*}
since \(t^{2k+1}\leq (R_N)^{2N-1}=\mathcal O(1)\). Combining previous considerations with \cref{lem: Bernstein-Walsh alt} and \(R_N-1=\mathcal O(N^{-\varkappa-1-\beta})\) yields for some \(c''>0\)
\begin{equation}\label{eq:prf-quasi-ortho-F0}
\int_{\mathscr E_N'\setminus\mathscr D}\abs{q\overline {F_N} \omega}\d\A\leq \frac{cc_\infty\sqrt{N}}{\pi}\int_1 ^{R_N} \norm{q}_{L^2(\Gamma_{t})} \d t\leq c''\norm{q}_{L^2(\mathscr D,\omega)} N^{-\varkappa-\beta},
\end{equation}
and thus \(I_2 = \mathcal O(\tnorm{q}_{L^2(\mathscr D,\omega)} N^{-\varkappa-\beta})\).

We turn now to \(I_1\). Using the disintegration formula \eqref{eq: disintegration}, we get
\begin{align*}
I_1&=\int_{-\delta_N'}^{\delta_N}\int_\mathbb T\frac{\Lambda_N^{-1}[q](\psi_{N,t}(\zeta))}{f_N(\psi_{N,t}(\zeta))}\Upsilon_{N,t}(\zeta)\abs{\d\zeta}\d t\\
&=\int_{-\delta_N'}^{\delta_N}\int_\mathbb T\frac{\Lambda_N^{-1}[q](\psi_{N,t}(\zeta))}{f_N(\psi_{N,t}(\zeta))}\left(\Upsilon_{N,t}(\zeta)-\pi^{-1}(N+1)e^{-2(N+1)t}\right)\abs{\d\zeta}\d t
\end{align*}
where we have used that \(\Lambda_N^{-1}[q]\) is holomorphic in a neighborhood of \(\mathbb D^c\) and vanishes at infinity. Further, by construction \(f_N\) does  not have any roots and it admits a finite non-zero value at infinity. Therefore, by the Cauchy integral formula
\[\frac{1}{2\pi}\int_\mathbb T\frac{\Lambda_N^{-1}[q](\psi_{N,t}(\zeta))}{f_N(\psi_{N,t}(\zeta))}\abs{\d\zeta}=\lim_{z\to \infty}\frac{\Lambda_N^{-1}[q](z)}{f_N(z)}=0.\]
For \(G_{N,t}=\Upsilon_{N,t}-\pi^{-1}(N+1) e^{-2(N+1)t}\) we have \(G_{N,t}(\zeta)=\Upsilon_{N,t}(\zeta)\cdot \mathcal O(N^{-\varkappa-\beta})\) since
\begin{align*}
\frac{G_{N,t}(\zeta)}{\Upsilon_{N,t}(\zeta)}&=1-\frac{\pi^{-1}(N+1)e^{-2(N+1)t}}{\Upsilon_{N,t}(\zeta)}=1-\frac{1}{1+\mathcal O(N^{-\varkappa-\beta})}=\mathcal O(N^{-\varkappa-\beta}).
\end{align*}
Thus, using again \eqref{eq: disintegration} we have for a suitable constant \(C>0\) and \(Q_N\coloneqq \Bigl|\frac{\Lambda_N^{-1}[q]}{f_N}\Bigr|f_N\)
\begin{align*}
\bigg|\int_{\mathscr E_N'}q(z)\overline{F_N(z)}\omega(z)\,\d\A(z)\bigg| &= \bigg|\int_{-\delta_N'}^{\delta_N}\int_\mathbb T\frac{\Lambda_N^{-1}[q](\psi_{N,t}(\zeta))}{f_N(\psi_{N,t}(\zeta))}G_{N,t}(\zeta)\abs{\d\zeta}\d t\bigg|\\
&\leq CN^{-\varkappa-\beta} \int_{-\delta_N'}^{\delta_N}\int_\mathbb T  \frac{Q_N(\psi_{N,t}(\zeta))}{f_N(\psi_{N,t}(\zeta))}\Upsilon_{N,t}(\zeta)\abs{\d\zeta}\d t\\
&=   C N^{-\varkappa-\beta}\int_{\mathscr E_N'}\Lambda_N[Q_N](z) \overline{F_N(z)}\omega(z)\,\d\A(z)\\
&= C N^{-\varkappa-\beta}\int_{\mathscr E_N'} \abs{\Lambda_N^{-1}[q](\phi_N(z))} \frac{\abs{F_N(z)}^2}{\abs{f_N(\phi_N(z))}}\omega(z)\, \d \A(z)  \\
&= C N^{-\varkappa-\beta}\int_{\mathscr E_N'} \frac{\abs{q(z)}}{\abs{\phi'_N(z) \phi_N^N(z)}} \frac{\abs{F_N(z)}^2}{\abs{f_N(\phi_N(z))}} \omega(z)\,\d \A(z)  \\
&= C N^{-\varkappa-\beta}\int_{\mathscr E_N'} \abs{q(z) F_N(z)}\omega(z)\, \d \A(z).
\end{align*}
We can further split the remaining integral and employ \eqref{eq:prf-quasi-ortho-F0} together with the \(L^2\)-estimate \(\norm{F_N}_{L^2(\mathscr D\cap \mathscr E_N',\omega)}=1+\mathcal O(N^{-\varkappa-\beta})\) to obtain
\begin{align*}
\int_{\mathscr E_N'} \abs{q F_N}\omega \d \A&=\int_{\mathscr D\cap\mathscr E_N'} \abs{q F_N}\omega \d \A+\int_{\mathscr E_N'\setminus\mathscr D} \abs{q(z) F_N(z)}\omega(z) \d \A(z)\\
&\leq \norm{q}_{L^2(\mathscr D\cap \mathscr E_N', \omega)}\norm{F_N}_{L^2(\mathscr D\cap \mathscr E_N', \omega)}+\mathcal O\left(\norm{q}_{L^2(\mathscr D,\omega)} N^{-\varkappa-\beta}\right)\\
&\leq \mathcal O \left(\norm{q}_{L^2(\mathscr D,\omega)}\right).
\end{align*}
This completes the proof.
\end{proof}

We take this opportunity to point out a consequence of the precise setup of \cref{prop: quasi ortho}. The condition \(A\geq(\varkappa+\alpha)/2\) reflects that the foliation has to extend sufficiently far into \(\mathscr D\) to obtain the \(L^2\)-estimate and quasi-orthogonality with the desired error rate. Controlling the flow throughout the resulting boundary layer requires the underlying Laurent families to remain stable there. This explains why we have considered Laurent polynomials of degree \(\mathcal O(L_N)\), as this degree scale allows the foliation to penetrate sufficiently far into \(\mathscr D\) while preserving the required estimates.

\subsection{Faber Polynomialization and Global Quasi-Orthogonality}\label{sec:faber}
We aim to find an appropriate polynomial approximation of \(F_N\) on \(\mathscr E_N'\). 
In contrast to \cite{hedenmalm2024riemann} we do not use Hörmander's \(\bar\partial\)-estimate. Instead we follow the ideas of \cite{suetin1974polynomials} and  apply an approximation scheme based on Faber polynomials.

We recall the model function 
\[F_N=\sqrt{N+1} \phi_N'\phi_N^N \exp \left({\textstyle \sum_{j=0}^\varkappa N^{-j} \nu_{N,j}\circ \phi_N}\right),\]
where \(\nu_{N,j} = \mathcal F_{L_N}[\nu_j]\) for some \(\nu_{j}\in C^{\varkappa-j,\alpha}(\mathbb T)\) admitting only  Fourier coefficients corresponding to non-positive indices. By \(B_N\) we denote the Faber polynomials induced by \(F_N\). This is the unique polynomial of degree \(N\) satisfying, for \(z\in \mathscr D_{N,1}\) and \(\rho >1\),
\begin{equation}\label{eq: def Faber2}
B_N(z) =  \mfrac{1}{2\pi i}\int_{\Gamma_{N,\rho}}\frac{F_N(\zeta)}{\zeta -z}\d\zeta,
\end{equation}
where \(\Gamma_{N,\rho}= \phi_N^{-1}(\mathbb T_\rho)\). In other words, \(B_{N}\) can be seen as the polynomial part of \(F_N\).

\begin{lemma}\label{lem:Faber-L2-approx}
For every \(A,s>0\) we have
\begin{equation}\label{eq:Faber-L2-approx1}
F_N(z)-B_N(z)=\mathcal O(N^{-s}),\qquad e^{-\frac{A\log N}{N}}\leq \abs{\phi_N(z)}. 
\end{equation}
Moreover, we have the \(L^2\)-estimate
\begin{equation}\label{eq:Faber-L2-approx2}
\norm{F_N-B_N}_{L^2(\mathscr E_N',\omega)}=\mathcal O\left(N^{-s}\right).
\end{equation}
\end{lemma}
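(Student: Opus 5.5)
The plan is to write $F_N-B_N$ as a Cauchy integral over a curve lying inside the region where $\phi_N$ is defined, and to use the fact that $F_N-B_N$ is the ``negative part'' of $F_N$ in the $\phi_N$-coordinate. Fix $A,s>0$ and pick $A'>A$ with $A'$ large; by \cref{theo:conf-inverse} (applied with $A'$), $\phi_N$ is conformal on $\mathscr D^e_{N,r_N'}$ with $r_N'=e^{-A'\log N/N}$. For $z$ with $\abs{\phi_N(z)}>\rho'$, where $r_N'<\rho'<\abs{\phi_N(z)}$, Cauchy's theorem applied to $F_N$ on the region between $\Gamma_{N,\rho'}$ and $\Gamma_{N,\rho}$ (using the residue at $z$) gives
\[
F_N(z)-B_N(z)=\mfrac{1}{2\pi i}\int_{\Gamma_{N,\rho'}}\frac{F_N(\zeta)}{\zeta-z}\,\d\zeta .
\]
Moreover, $F_N-B_N$ is holomorphic in $\mathscr D^e_{N,\rho'}$ and vanishes at infinity, because $B_N$ is exactly the polynomial part of the Laurent expansion of $F_N$ at infinity. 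This identity extends to the points $z$ with $\abs{\phi_N(z)}\le\rho'$ lying in the region where both sides are defined.

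The key estimate is a bound on $F_N$ on $\Gamma_{N,\rho'}$. Here $\abs{\phi_N}^N=\rho'^N$, and $\phi_N'$ and the exponential factor are uniformly bounded there by \cref{theo:Fourier-Master} (the $\nu_{N,j}=\mathcal F_{L_N}[\nu_j]$ are bounded on annuli of width $\mathcal O(\log N/N)$). Choosing $\rho'=e^{-2A''\log N/N}$ gives $\abs{F_N}\lesssim\sqrt N\,N^{-2A''}$ on $\Gamma_{N,\rho'}$.

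Rather than bounding the integrand pointwise near the contour, I would use the Laurent structure directly. Set $g=(F_N-B_N)\circ\phi_N^{-1}\cdot(\phi_N^{-1})'$, which equals $\sum_{k\le -1}b_kw^k$ on $\abs w>\rho'$, since $\Lambda_N^{-1}$ turns the polynomial part into nonnegative powers. Its coefficients satisfy
\[
\abs{b_k}\le\rho'^{-k}\max_{\mathbb T_{\rho'}}\abs{F_N\circ\phi_N^{-1}\,(\phi_N^{-1})'}\lesssim \rho'^{\,N-k}\sqrt N .
\]
Summing the series on $\abs w\ge\rho''=e^{-A\log N/N}$ gives a bound of order
\[
\sqrt N\,\rho'^N\sum_{k\ge1}(\rho'/\rho'')^{k}\lesssim \sqrt N\,N^{-2A''}\cdot\frac{N}{\log N}.
\]
Taking $A''$ large relative to $s$ yields \eqref{eq:Faber-L2-approx1}. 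Dividing by $(\phi_N^{-1})'$, which is bounded below by \cref{theo:conf-inverse} and Bernstein, finishes the pointwise bound.

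For \eqref{eq:Faber-L2-approx2}, note that $\mathscr E_N'\subset\phi_N^{-1}(\{\abs w\ge e^{-2\delta_N}\})$, since $\abs{\psi_{N,t}}=e^{-t+\mathcal O(N^{-1})}$. So it is covered by the pointwise estimate with $A$ replaced by $2A$. Since $\mathscr E_N'$ has bounded area and $\omega$ is bounded, the $L^2$ bound follows.

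The main obstacle is justifying that $\phi_N$ (and hence $F_N$) is defined and uniformly controlled out to radius $e^{-A'\log N/N}$ with $A'$ arbitrarily large. This follows from \cref{theo:conf-inverse} and \cref{theo:Fourier-Master} because $A$ there is arbitrary. The constants depend on $A'$ but not on $N$, which is all we need.
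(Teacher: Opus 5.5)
Your argument breaks at the coefficient bound for \(b_k\). Write \(\psi_N=\phi_N^{-1}\). On \(\abs{w}=\rho'\) you have \(F_N(\psi_N(w))\psi_N'(w)=f_N(w)w^N\), so what you actually estimated are the negative-index Laurent coefficients of \(f_N(w)w^N\). But
\[g(w)=f_N(w)w^N-B_N(\psi_N(w))\psi_N'(w),\]
and the second term is not a polynomial in \(w\). Since \(\psi_N\) is a Laurent polynomial with negative powers, \(B_N(\psi_N(w))\psi_N'(w)\) generally has a nonzero negative-index part. Already \(\psi(w)=w+aw^{-1}\) and \(B(z)=z\) give \(B(\psi(w))\psi'(w)=w-a^2w^{-3}\).

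So the justification ``\(\Lambda_N^{-1}\) turns the polynomial part into nonnegative powers'' is false. It is true that \(g\) has only negative powers, but only because \(F_N-B_N\) vanishes at infinity. That fact does not identify \(b_k\) with the coefficients of \(f_Nw^N\). Applying the Cauchy estimate to \(g\) itself is legitimate, but it needs \(\max_{\abs w=\rho'}\abs g\), i.e.\ a bound on \(F_N-B_N\) on \(\Gamma_{N,\rho'}\), which is the statement you are proving. Proving directly that the negative part of \(B_N(\psi_N)\psi_N'\) is small also brings you back to a lower bound on \(\abs{\psi_N(u)-\psi_N(w)}\).

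The missing ingredient is the pointwise bound you tried to bypass, and this is how the paper argues. Start from the Cauchy representation, which carries a minus sign: \(F_N-B_N=-\frac1{2\pi i}\int_{\Gamma_{N,\rho'}}\frac{F_N(\zeta)}{\zeta-z}\,\d\zeta\). Combined with \(\int_{\Gamma_{N,\rho'}}\abs{\phi_N'\phi_N^N}\abs{\d\zeta}=2\pi\rho'^{N+1}\), this gives
\(\abs{F_N(z)-B_N(z)}\lesssim\sqrt N\,\rho'^{N+1}\sup_{\zeta\in\Gamma_{N,\rho'}}\abs{\zeta-z}^{-1}\).

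What remains is a separation estimate. First, \(\sup_{\mathscr D_{N,\rho'}^c}\abs{\phi_N'}\le c\), by the maximum principle and the uniform convergence \(\psi_N'(\rho'\cdot)\to\psi'\neq0\) on \(\mathbb T\). Now integrate \(\phi_N'\) along the segment joining a pair of nearest points of \(\{\abs{\phi_N}\ge e^{-A\log N/N}\}\) and \(\Gamma_{N,\rho'}\); by minimality this segment lies in \(\mathscr D_{N,\rho'}^c\). This yields \(\abs{\zeta-z}\ge c^{-1}(e^{-A\log N/N}-\rho')\gtrsim\log N/N\) whenever \(\abs{\phi_N(z)}\ge e^{-A\log N/N}\) and \(\zeta\in\Gamma_{N,\rho'}\).

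With \(\rho'=e^{-\sigma\log N/N}\) you get \(\mathcal O(N^{3/2-\sigma})\), and taking \(\sigma\) large gives \eqref{eq:Faber-L2-approx1}. Your reduction of \eqref{eq:Faber-L2-approx2} to the pointwise bound then goes through as written.
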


\begin{proof}
We notice that \eqref{eq:Faber-L2-approx2} follows from \eqref{eq:Faber-L2-approx1}. Let \(\delta_N=\frac{A\log N}{N}\), then for sufficiently large \(N\), we have 
\[\abs{\psi_{N,\delta_N}(\zeta)}= e^{-\delta_N}+\mathcal O(N^{-1}),\qquad \zeta\in \mathbb T.\]
Consequently, we get \(\mathscr E_N'\subset \mathscr D_{N,r_N'}^c\) for \(r_N' = e^{-\frac{A'\log N}{N}}\) and any \(A'>A\). Therefore, it is sufficient to prove \eqref{eq:Faber-L2-approx1}.

The difference \(F_N-B_N\) can be captured in terms of the integral
\[F_N(z)-B_N(z) =- \mfrac{1}{2\pi i}\int_{\Gamma_{N,r_N}}\frac{F_N(\zeta)}{\zeta-z}\,\d \zeta,\]
where \(\Gamma_{N,r_N} = \phi_N^{-1}(\mathbb T_{r_N})\) for \(r_N=e^{-\frac{\sigma \log N}{N}}\) and any \(\sigma>A\).
We will assume for the moment that \(\tabs{\zeta-z}^{-1}=\mathcal O(N)\) uniformly for \(e^{-\frac{A\log N}{N}}\leq \tabs{\phi_N(z)}\) and \(\zeta \in \Gamma_{N,r_N}\). 
Then using that \(\nu_{N,j}\) is uniformly bounded, we obtain for some \(c>0\)
\[
\abs{F_N(z)-B_N(z)}\leq \frac{c}{2\pi} N^{3/2}\int _{\Gamma_{N,r_N}} \abs{\phi'_N(\zeta)\phi^N_N(\zeta)}\abs{\d \zeta}=cN^{3/2} r_N^{N+1} \leq c  N^{3/2-\sigma}.
\]
Thus, by choosing \(\sigma\) sufficiently large \eqref{eq:Faber-L2-approx1} follows. 

We turn now to estimating \(\abs{\zeta-z}\). The general strategy follows ideas presented in the proof of \cref{lem:min-curve}.
Let \(\rho_N= e^{-\frac{A\log N}{N}}\), and choose minimizers \(z_N\) and \(\zeta_N\) of the minimization problem
\[\min_{\substack{\abs{\phi_N( z)}\geq \rho_N\\\abs{\phi_N( \zeta)}=r_N}}\abs{ \zeta - z}.\]
Then \(\abs{\phi_N(z_N)}=\rho_N\) and the line segment \(\gamma_N =[\zeta_N,z_N]\) is contained in \(\mathscr D_{N,r_N}^c\). We observe
\[\abs{\phi_N(\zeta_N)-\phi_N(z_N)}=\abs{\int_{\gamma_N} \phi'_N(w)\,\d w}\leq \abs{\zeta_N-z_N}\max_{w\in \mathscr D_{N,r_N}^c}\abs{\phi_N'(w)}.\]
Using the maximum principle together with \cref{coro:uni-conv-Fourier} we get for some \(c>0\)
\[\max_{w\in\mathscr D_{N,r_N}^c}\abs{\phi_N'(w)}=\max_{u\in \mathbb T_{r_N}}\abs{\phi_N'(\phi_N^{-1}(u))}=\left(\min_{u\in\mathbb T} (\phi_N^{-1})'(r_Nu)\right)^{-1}<c,\]
where we have used that \(\phi^{-1}\in C^{\varkappa+1,\alpha}(\mathbb T)\) and that the derivative \((\phi^{-1})'\) does not vanish.
Putting things together yields
\[\abs{\zeta-z}\geq \abs{\zeta_N-z_N}\geq c^{-1} \abs{\phi_N(\zeta_N)-\phi_N(z_N)}\geq c^{-1}(\rho_N-r_N),\]
and consequently \(\abs{\zeta-z}^{-1}=\mathcal O(\frac{N}{\log N})\).
\end{proof}

It turns out that the Faber polynomials \(B_N\) not only approximate \(F_N\) efficiently but also inherit its quasi-orthogonality. Moreover, this property holds throughout \(\mathscr D\), rather than only on \(\mathscr D\cap\mathscr E_N'\). The following result may therefore be viewed as an analogue of \cref{prop: quasi ortho}.

\begin{proposition}\label{prop: quasi ortho Faber}
For any \(\beta\in(0,\alpha)\), we have \(\tnorm{ B_N}_{L^2(\mathscr D,\omega)}=1+\mathcal O(N^{-\varkappa-\beta})\) and 
\[\int_{\mathscr D} q(z)\overline{B_N(z)}\omega(z)\,\d\A(z)=\mathcal{O}\left(N^{-\varkappa-\beta}\norm{ q}_{L^2( \mathscr D,\omega)}\right),\quad q\in \mathrm{Pol}_{N-1},\ N\to\infty.\]
\end{proposition}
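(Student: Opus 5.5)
The plan is to transfer \cref{prop: quasi ortho} from \(F_N\) on \(\mathscr D\cap\mathscr E_N'\) to \(B_N\) on all of \(\mathscr D\). We split \(\mathscr D=(\mathscr D\cap\mathscr E_N')\cup(\mathscr D\setminus\mathscr E_N')\). On the first piece \(B_N\) is replaced by \(F_N\) using \cref{lem:Faber-L2-approx}. On the second piece we show that \(B_N\) itself has negligible \(L^2\)-mass. Concretely, \(\norm{B_N}_{L^2(\mathscr D,\omega)}^2\) equals \(\norm{B_N}^2_{L^2(\mathscr D\cap\mathscr E_N',\omega)}+\norm{B_N}^2_{L^2(\mathscr D\setminus\mathscr E_N',\omega)}\). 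The first term is \(\norm{F_N}^2_{L^2(\mathscr D\cap\mathscr E_N',\omega)}+\mathcal O(N^{-s})=1+\mathcal O(N^{-\varkappa-\beta})\), by \eqref{eq:Faber-L2-approx2}, Cauchy--Schwarz and \cref{prop: quasi ortho}. Similarly, for \(q\in\mathrm{Pol}_{N-1}\), the inner product over \(\mathscr D\cap\mathscr E_N'\) differs from the one with \(F_N\) by at most \(\norm{q}_{L^2(\mathscr D,\omega)}\norm{F_N-B_N}_{L^2(\mathscr E_N',\omega)}=\mathcal O(N^{-s}\norm{q})\). What remains is to show
\[
\norm{B_N}_{L^2(\mathscr D\setminus\mathscr E_N',\omega)}=\mathcal O(N^{-\varkappa-\beta}).
\]
Given this, Cauchy--Schwarz disposes of the inner product over \(\mathscr D\setminus\mathscr E_N'\).

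For this remaining estimate we need the inner boundary of \(\mathscr E_N'\). This is the curve \(\phi_N^{-1}(\psi_{N,\delta_N}(\mathbb T))\), which (as in the proof of \cref{lem:Faber-L2-approx}) lies where \(\abs{\phi_N}=e^{-\delta_N+\mathcal O(N^{-1})}\). Hence \(\mathscr D\setminus\mathscr E_N'\subset\mathscr D_{N,\rho_N}\) with \(\rho_N=e^{-A'\log N/N}\) for any \(A'<A\). We choose \(A\) large, e.g. \(A\geq \varkappa+2\); this is permitted since \cref{col: flow goal} and \cref{prop: quasi ortho} hold for every large \(A\). We then bound \(B_N\) on \(\mathscr D_{N,\rho_N}\) by the maximum principle. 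On the curve \(\Gamma_{N,\rho_N}=\phi_N^{-1}(\mathbb T_{\rho_N})\), which lies in the region covered by \eqref{eq:Faber-L2-approx1}, we have
\[
\abs{B_N}\le\abs{F_N}+\mathcal O(N^{-s})=\mathcal O\bigl(\sqrt N\rho_N^N\bigr)+\mathcal O(N^{-s})=\mathcal O\bigl(N^{1/2-A'}\bigr).
\]
Here we use that \(\phi_N'\) and the \(\nu_{N,j}\) are uniformly bounded on the relevant annulus, by \cref{theo:Fourier-Master} and the proof of \cref{lem:min-curve}. Since \(B_N\) is a polynomial, this bound persists on \(\mathscr D_{N,\rho_N}\). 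Integrating over a set of bounded area gives \(\norm{B_N}_{L^2(\mathscr D\setminus\mathscr E_N',\omega)}=\mathcal O(N^{1/2-A'})\), which is \(\mathcal O(N^{-\varkappa-\beta})\) once \(A'>\varkappa+3/2\).

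The main obstacle is the geometric step: verifying \(\mathscr D\setminus\mathscr E_N'\subset\mathscr D_{N,\rho_N}\) with the maximum principle applicable. The issue is that the level curve \(\Gamma_{N,\rho_N}\) must lie inside the foliated region on which \eqref{eq:Faber-L2-approx1} is available, while \(\mathscr D\setminus\mathscr E_N'\) must lie inside \(\Gamma_{N,\rho_N}\). I would handle this with the expansion \(\psi_{N,t}(\zeta)=e^{-t}\zeta+\mathcal O(\abs tN^{-1})\) from \cref{lem:inclusion-of-domains}. It shows that the inner foliation curve lies between \(\Gamma_{N,e^{-\delta_N(1\pm\varepsilon)}}\). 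Together with the nesting of the curves \(\psi_{N,t}(\mathbb T)\) and the conformality of \(\phi_N^{-1}\) on \(\mathbb D^e_{r_N}\) from \cref{theo:conf-inverse}, this gives the required inclusion. Finally, there is a bookkeeping point: \(\mathscr D\cap\mathscr E_N'\) and \(\mathscr D\setminus\mathscr E_N'\) are disjoint, so the norm splitting above is exact.
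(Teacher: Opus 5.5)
Your proof is correct and follows the paper's argument. It uses the same splitting into \(\mathscr D\cap\mathscr E_N'\), handled by \cref{prop: quasi ortho} together with \cref{lem:Faber-L2-approx}, and \(\mathscr D\setminus\mathscr E_N'\subset\mathscr D_{N,\rho}\), on which \(B_N\) is shown to be polynomially small after choosing \(A\) large. The only cosmetic difference is that you bound \(B_N\) there via the maximum principle and \eqref{eq:Faber-L2-approx1} on \(\Gamma_{N,\rho_N}\), whereas the paper directly estimates the Cauchy integral representation of \(B_N\) over \(\Gamma_{N,r_N'}\), which is also how \eqref{eq:Faber-L2-approx1} itself is proved.
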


\begin{proof}
As before, let \(A\) denote the implicit constant determining the width of \(\mathscr E_N'\). We take \(A\) sufficiently large to ensure that \cref{prop: quasi ortho} and the subsequent estimates hold. Note that the Faber polynomials \(B_N\) are independent of the particular choice of \(A\).

Let us first consider \(\tnorm{B_N}_{L^2(\mathscr D,\omega)}\). Splitting the integral gives
\[\norm{B_N}_{L^2(\mathscr D,\omega)}^2= \norm{B_N}_{L^2(\mathscr D\setminus \mathscr E_N',\omega)}^2+\norm{B_N}_{L^2(\mathscr D \cap \mathscr E_N',\omega)}^2\eqqcolon I_1+I_2.\]
Since \(\partial\mathscr D\subset\mathscr E_N'\), the set \(\mathscr D\setminus\mathscr E_N'\) is bounded away from \(\partial\mathscr D\).
Using \cref{prop: quasi ortho} and \cref{lem:Faber-L2-approx} we immediately  get \(I_2 = 1+\mathcal O(N^{-\varkappa-\beta})\). 

To estimate \(I_1\) we will bound \(B_N\) using its contour integral representation.  We set \(\delta_N=\frac{A\log N}{N}\), \(r_N=e^{-\delta_N/2}\) and \(r_N'=e^{-\delta_N/3}\). For sufficiently large \(N\), it holds \(\mathscr D\setminus \mathscr E_N'\subset \mathscr D_{N,r_N}\). For \(z\in \mathscr D_{N,r_N}\) and some \(c,c'>0\) we have
\[\abs{B_N(z)} \leq \frac{\sqrt{N+1}\,c}{2\pi}\int_{\Gamma_{N,r_N'}}\frac{\abs{\phi_N'(\zeta)\phi_N^N(\zeta)}}{\abs{\zeta-z}}\abs{\d\zeta}\leq c' N^{3/2} e^{-\frac{(N+1)\delta_N}{3}}=\mathcal O(N^{3/2-A/3}),\]
where we have used that \(\nu_{N,j}\) are uniformly bounded on the given contours and that \(\tabs{\zeta-z}^{-1}=\mathcal O(N)\) as shown in the proof of \cref{lem:Faber-L2-approx}. By taking \(A\) sufficiently large, we get \(B_N(z) = \mathcal O(N^{-2(\varkappa+\beta)})\) on \(z\in \mathscr D_{N,r_N}\) and hence \(I_{1}=\mathcal O(N^{-2(\varkappa+\beta)})\). Combining the estimates for \(I_1\) and \(I_2\) yields \(\tnorm{B_N}_{L^2(\mathscr D,\omega)} =1+\mathcal O(N^{-\varkappa-\beta})\).

Let us now turn to the second statement. Using the results above as well as \cref{prop: quasi ortho} together with \cref{lem:Faber-L2-approx} we get
\begin{align*}
\bigg |\int_\mathscr D q \overline{B_N}\omega \d \A\bigg|&=\bigg |\int_{\mathscr D\setminus \mathscr E_N'}q \overline {B_N} \omega \d\A+ \int_{\mathscr D\cap \mathscr E'_N}q \overline{B_N} \omega \d \A\bigg |\\
&\hspace{-1.5cm}\leq \norm{ q}_{L^2(\mathscr D,\omega)}\left(\norm{B_N}_{L^2(\mathscr D\setminus\mathscr E_N',\omega)}+\norm{ F_N-B_N}_{L^2(\mathscr D\cap\mathscr E_N',\omega)}\right)+\bigg|\int_{\mathscr D\cap\mathscr E_N'}q\overline {F_N} \omega\d\A\bigg|\\
&\hspace{-1.5cm}=\mathcal O\Bigl(\norm{ q}_{L^2(\mathscr D,\omega)} N^{-\varkappa-\beta}\Bigr),
\end{align*}
where we have used \(\norm{B_N}_{L^2(\mathscr D\setminus\mathscr E_N',\omega)}=\sqrt{I_1} = \mathcal O(N^{-\varkappa-\beta})\).
\end{proof}

\subsection{Pointwise Asymptotics of the Orthogonal Polynomials}\label{sec:asymp expansion}
Following the considerations of \cite{hedenmalm2024riemann} we turn to the central section of the paper, where we prove our main result, namely, the asymptotics of the planar orthogonal polynomials. By construction, the functions \(\nu_j\) are holomorphic on \(\mathbb D^e\), admit \(C^{\varkappa-j,\alpha}\) boundary values on \(\mathbb T\), and satisfy \(\nu_j(\infty)\in\mathbb R\). Together with the normalization \(\phi_N'(\infty)=\phi'(\infty)>0\) we notice that the leading coefficient of \(B_N\) is positive. Indeed, as \(z\to\infty\),
\begin{align*}
B_N(z)&= F_N(z)+ \mfrac{1}{2\pi i}\int_{\partial \mathscr D}\frac{F_N(\zeta)}{\zeta-z}\,\d\zeta\\
&= \sqrt{N+1}\phi_N'(z)\phi_N^N(z)\exp\left({\textstyle \sum_{j=0}^{\varkappa}N^{-j}\nu_{N,j}(\phi_N(z))}\right)+\mathcal O(z^{-1})\\
&=\sqrt{N+1} (\phi'(\infty))^{N+1}\exp\left({\textstyle \sum_{j=0}^{\varkappa}N^{-j}\nu_j(\infty)}\right) z^N + \mathcal O(z^{N-1}).
\end{align*}

Let \(\mathcal I\) denote the identity operator on \(L^2(\mathscr D,\omega)\) and \(\mathcal S_N\) the orthogonal projection onto \(\mathrm{Pol}^2_{N-1}(\mathscr D,\omega)\). We define
\[Q_N\coloneqq \mathcal S_NB_N,\qquad P_N^*\coloneqq (\mathcal I-\mathcal S_N)B_N=B_N-Q_N.\]
Since \(Q_N\) has degree at most \(N-1\), the polynomials \(P_N^*\) and \(B_N\) have the same leading coefficient. Moreover, \(P_N^*\) is orthogonal to \(\mathrm{Pol}^2_{N-1}(\mathscr D,\omega)\) and is therefore a constant multiple of the orthogonal polynomial \(P_N\). Since both polynomials have positive leading coefficients, there exists \(c_N>0\) such that
\[P_N^*=c_NP_N.\]
Let \(\scalar{\cdot}{\cdot}_{\mathscr D,\omega}\) denote the inner product on \(L^2(\mathscr D,\omega)\). Using the self-adjointness of \(\mathcal S_N\) and \cref{prop: quasi ortho Faber}, we obtain, for any \(\beta\in(0,\alpha)\),
\begin{equation}\label{eq:norm Q_N}
\norm{Q_N}_{L^2(\mathscr D,\omega)}=\frac{\scalar{\mathcal S_N B_N}{Q_N}_{\mathscr D,\omega}}{\norm{Q_N}_{L^2(\mathscr D,\omega)}}=\frac{\scalar{ B_N}{Q_N}_{\mathscr D,\omega}}{\norm{Q_N}_{L^2(\mathscr D,\omega)}}=\mathcal O(N^{-\varkappa-\beta}),
\end{equation}
assuming \(\tnorm{Q_N}_{L^2(\mathscr D,\omega)}\neq 0\). If the norm of \(Q_N\) vanishes, then the estimate is trivially satisfied.
Consequently,
\[\norm{c_NP_N-B_N}_{L^2(\mathscr D,\omega)}
=\norm{P_N^*-B_N}_{L^2(\mathscr D,\omega)}
=\norm{Q_N}_{L^2(\mathscr D,\omega)}
=\mathcal O(N^{-\varkappa-\beta}).\]
Furthermore, \cref{prop: quasi ortho Faber} gives \(\tnorm{B_N}_{L^2(\mathscr D,\omega)}^2=1+\mathcal O(N^{-\varkappa-\beta})\). Since \(B_N=P_N^*+Q_N\),
where the two terms on the right side are orthogonal, the Pythagorean identity and the normalization
\(\tnorm{P_N}_{L^2(\mathscr D,\omega)}=1\) yield
\[c_N^2
=\norm{P_N^*}_{L^2(\mathscr D,\omega)}^2
=\norm{B_N}_{L^2(\mathscr D,\omega)}^2-\norm{Q_N}_{L^2(\mathscr D,\omega)}^2
=1+\mathcal O(N^{-\varkappa-\beta}).\]
Hence, \(c_N=1+\mathcal O(N^{-\varkappa-\beta})\). Fix some \(A>0\) and set \(r_N=e^{-\frac{A\log N}{N}}\). Employing \eqref{eq:Faber-L2-approx1}, we get uniformly for \(z\in\mathscr D_{N,r_N}^c\)
\begin{align*}
P_N(z)
&=\frac{1}{c_N}\left(B_N(z)-Q_N(z)\right)\\
&=(F_N(z)-Q_N(z)+\mathcal O(N^{-A-\varkappa-\beta}))(1+\mathcal O(N^{-\varkappa-\beta})).
\end{align*}
Since \(\abs{\phi_N'}\) is uniformly bounded away from zero and the functions \(\nu_{N,j}\circ\phi_N\) are uniformly bounded, it follows that
\[\abs{F_N(z)}\geq c\sqrt{N+1}\abs{\phi_N^N(z)}\geq c \sqrt{N+1}N^{-A},\qquad z\in \mathscr D_{N,r_N}^c ,\]
for some \(c>0\). Consequently, for \(z\in\mathscr D_{N,r_N}^c\)
\begin{equation}\label{eq:decomp P_N}
P_N(z)=F_N(z)\Bigl(1+\mathcal O\Bigl(N^{-1/2}\abs{Q_N(z)\phi_N^{-N}(z)}+N^{-\varkappa-\beta}\Bigr)\Bigr)\Bigl(1+\mathcal O(N^{-\varkappa-\beta})\Bigr).
\end{equation}

\begin{theorem}\label{theo:Expansion1}
Let \(\varkappa\in \mathbb Z_{\geq 0}\), \(\alpha\in(0,1)\), \(\mathscr D\) be a \(C^{\varkappa+1,\alpha}\)-smooth Jordan domain, and \(\omega\) a \(C^{\varkappa,\alpha}\)-smooth (non-negative) weight defined in a neighborhood of \(\overline{\mathscr D}\) such that \(\omega>0\) on \(\partial\mathscr D\).
Then there exist bounded functions \(\nu_j\), holomorphic on \(\mathbb D^e\), real-valued at infinity, and with boundary values in \(C^{\varkappa-j,\alpha}(\mathbb T)\), such that for any \(\beta\in(0,\alpha)\) and \(\gamma \in [0,\varkappa+\beta)\) it holds uniformly for \(z\) with \(\dist(z,\mathscr D)\geq N^{-2\gamma}\)
\[
P_N(z) = \sqrt{N+1}\phi'(z)\phi^N(z)\exp\left({\textstyle \sum_{j=0}^\varkappa N^{-j} \nu_{j}(\phi(z))}\right) \left(1+\mathcal O(N^{\gamma-\varkappa-\beta})\right).
\]
\end{theorem}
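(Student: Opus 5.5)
We start from the decomposition \eqref{eq:decomp P_N}, which holds uniformly on \(\mathscr D_{N,r_N}^c\). The set \(\{\dist(z,\mathscr D)\geq N^{-2\gamma}\}\) lies inside \(\mathscr D^c\), which is contained in \(\mathscr D_{N,r_N}^c\) by \cref{lem:min-curve}. The proof therefore has three parts: bound the error term \(N^{-1/2}\abs{Q_N(z)\phi_N^{-N}(z)}\) in terms of \(\dist(z,\mathscr D)\); replace \(\phi_N\) by \(\phi\); and replace \(\nu_{N,j}\) by \(\nu_j\).

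\textbf{The main step is a pointwise bound on \(Q_N\) in the exterior.} We know \(\tnorm{Q_N}_{L^2(\mathscr D,\omega)}=\mathcal O(N^{-\varkappa-\beta})\) by \eqref{eq:norm Q_N}, and \(\deg Q_N\leq N-1\).

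\emph{Bound on \(\partial\mathscr D\).} From \eqref{eq:lem poly est2} in \cref{lem: Bernstein-Walsh alt} we get \(\tnorm{Q_N}_{L^2(\partial\mathscr D)}=\mathcal O(N^{1/2-\varkappa-\beta})\).

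\emph{Bound in \(\mathscr D^e\).} The function \(h=Q_N\,\phi^{-N}/\phi'\) (using the exact map \(\phi\)) is holomorphic in \(\mathscr D^e\) and vanishes at infinity, because \(\deg Q_N\le N-1\). Its boundary values on \(\mathbb T\) have \(L^2\)-norm \(\mathcal O(N^{1/2-\varkappa-\beta})\), since \(\phi'\) is bounded above and below on \(\partial\mathscr D\). We then apply the Cauchy integral formula in \(\phi\)-coordinates: for \(w=\phi(z)\) with \(\abs w>1\), Cauchy--Schwarz gives
\[
\abs{h\circ\phi^{-1}(w)}\lesssim \frac{\tnorm{h}_{L^2(\mathbb T)}}{\sqrt{\abs w^2-1}}\asymp \frac{\tnorm{h}_{L^2(\mathbb T)}}{\sqrt{\dist(z,\mathscr D)}}.
\]
Here \(\abs{\phi(z)}-1\asymp\dist(z,\mathscr D)\) near the boundary, by the \(C^1\) regularity of \(\phi\) up to \(\partial\mathscr D\). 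Hence
\[
N^{-1/2}\abs{Q_N\phi^{-N}}=\mathcal O\bigl(N^{-\varkappa-\beta}\dist(z,\mathscr D)^{-1/2}\bigr),
\]
which is \(\mathcal O(N^{\gamma-\varkappa-\beta})\) when \(\dist(z,\mathscr D)\ge N^{-2\gamma}\). Far from \(\mathscr D\) the bound is uniform, including at infinity.

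\emph{Passing from \(\phi\) to \(\phi_N\).} We need the same bound for \(\phi_N^{-N}\). We have \(\abs{\phi_N^N/\phi^N}=\abs{1+\mathcal O(N^{-\varkappa-1-\beta})}^N\) by \cref{theo:uni-approx-phi}, and this equals \(1+\mathcal O(N^{-\varkappa-\beta})\).

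\textbf{Replacing the approximants in \(F_N\).} On \(\mathscr D^c\), \cref{theo:uni-approx-phi} gives \(\phi_N=\phi+\mathcal O(N^{-\varkappa-1-\beta})\), so \(\phi_N^N=\phi^N(1+\mathcal O(N^{-\varkappa-\beta}))\).

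For \(\phi_N'\): the difference \(\phi_N-\phi\) is holomorphic and bounded on \(\mathscr D^e\), so a Cauchy estimate gives a good bound away from \(\partial\mathscr D\). Near the boundary we instead write \(\phi_N'=1/\psi_N'(\phi_N)\) and use \(\tnorm{\psi_N'-\psi'}_{L^\infty(\mathbb T)}=\mathcal O(N^{-\varkappa-\beta}\log N)\), obtained from \cref{theo:Bernstein0} applied to \(\psi'\in C^{\varkappa,\alpha}\), together with the maximum principle and the Hölder continuity of \(\psi'\). This yields \(\phi_N'=\phi'(1+\mathcal O(N^{-\varkappa-\beta'}))\) for any \(\beta'<\alpha\); since \(\beta\) was arbitrary in \((0,\alpha)\), we may absorb the logarithm by slightly decreasing \(\beta\).

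For the exponent: \(\nu_{N,j}-\nu_j=\mathcal O(N^{j-\varkappa-\beta})\) on \(\mathbb D^c\), by \cref{theo:Bernstein0} and the maximum principle, noting that both functions are holomorphic and bounded in \(\mathbb D^e\). Hence \(N^{-j}(\nu_{N,j}-\nu_j)=\mathcal O(N^{-\varkappa-\beta})\). Finally, \(\nu_j\circ\phi_N-\nu_j\circ\phi\) is controlled by the Hölder continuity of \(\nu_j\) on \(\overline{\mathbb D^e}\), giving \(\mathcal O(N^{-\alpha(\varkappa+1+\beta)})\). This is enough when \(\varkappa\ge1\) (Lipschitz continuity) or when \(\alpha(1+\beta)>\beta\), which always holds.

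Collecting these estimates, every error term is \(\mathcal O(N^{\gamma-\varkappa-\beta})\), since \(\gamma\ge0\). The decisive step is the exterior pointwise estimate for \(Q_N\): the \(\sqrt{\dist}\) loss in the Cauchy--Schwarz bound is exactly what produces the exponent \(\gamma\).
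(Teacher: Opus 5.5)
Your overall route is the paper's: start from \eqref{eq:decomp P_N}, bound $Q_N/\phi_N^N$ in the exterior by a Cauchy integral together with \eqref{eq:lem poly est2} and \eqref{eq:norm Q_N}, then replace the approximants. Your main step is correct and differs from the paper's only cosmetically. The paper squares $Q_N/\phi_N^N$ and uses $\abs{\zeta-z}\geq\dist(z,\mathscr D)$ directly in the $z$-plane, which spares it the comparison $\abs{\phi(z)}-1\asymp\dist(z,\mathscr D)$. Your Poisson-kernel bound in $w=\phi(z)$ yields the same $\dist^{-1/2}$ loss.

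The gap is in the last part. To compare $\nu_{N,j}(\phi_N(z))$ with $\nu_j(\phi(z))$, you pass through $\nu_j(\phi_N(z))$. You use the maximum-principle bound for $\nu_{N,j}-\nu_j$ on $\mathbb D^c$ and the Hölder continuity of $\nu_j$ on $\overline{\mathbb D^e}$; for $\phi_N'$ you likewise pass through $\psi'(\phi_N(z))$. You assert these replacements on all of $\mathscr D^c$, but $\phi_N$ does not map $\mathscr D^c$ into $\mathbb D^c$. On $\partial\mathscr D$ one only knows $\abs{\phi_N(z)}=1+\mathcal O(N^{-\varkappa-1-\beta})$, and $\phi_N(\partial\mathscr D)$ generally enters $\mathbb D$. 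At such points $\nu_j=-\Herg[\Re Q_j]$ and $\psi'$, which have only finitely smooth boundary values, are simply not defined. Moreover, a maximum-principle bound on $\mathbb D^c$ says nothing there.

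The fix is to reverse the order, as the paper does:
\[
\nu_{N,j}(\phi_N(z))-\nu_j(\phi(z))=\bigl[\nu_{N,j}(\phi_N(z))-\nu_{N,j}(\phi(z))\bigr]+\bigl[\nu_{N,j}(\phi(z))-\nu_j(\phi(z))\bigr].
\]
The second bracket is evaluated at $\phi(z)\in\mathbb D^c$, so it is $\mathcal O(N^{j-\varkappa-\beta})$ by \cref{theo:Bernstein0} and the maximum principle. In the first bracket, $\nu_{N,j}$ is a Laurent polynomial in non-positive powers, holomorphic on $\mathbb D^e_{r_N}$ with $r_N=e^{-\log N/N}$, and this region contains both points. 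The maximum principle and \eqref{eq:bernstein-ineq1} give $\nu_{N,j}'=\mathcal O(L_N)$ there. Integrating along a short path then gives $\mathcal O(L_NN^{-\varkappa-1-\beta})=\mathcal O(N^{-\varkappa-\beta})$, uniformly in $j$.

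This also makes your Lipschitz/Hölder case distinction unnecessary. As written it is keyed to $\varkappa\geq1$, whereas the real dividing line is $j<\varkappa$ versus $j=\varkappa$. The same reordering, $\psi_N'(\phi_N(z))\to\psi_N'(\phi(z))\to\psi'(\phi(z))$ with Bernstein applied to $\psi_N'$, repairs your treatment of $\phi_N'$.
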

\begin{proof}
Let  \(K_N = \left\{z\in\mathbb C\,|\, \dist(z,\mathscr D)\geq N^{-2\gamma}\right\}\) and \(\beta\in(0,\alpha)\) arbitrary. The polynomial \(Q_N\) is of degree at most \(N-1\), hence the function \(Q_N/\phi_N^N\) is finite valued at infinity. By applying the Cauchy integral formula we observe for \(z\in\mathscr D^e\)
\begin{equation}\label{eq:prf-EXP1-1}
\left(\frac{Q_N(z)}{\phi_N^N(z)}\right)^2 =- \frac{1}{2\pi i} \int_{\partial\mathscr D} \left(\frac{Q_N(\zeta)}{\phi_N^N(\zeta)}\right)^2\frac{\d \zeta}{\zeta-z}
\end{equation}
Then \cref{theo:uni-approx-phi} provides 
\begin{equation}\label{eq:prf-EXP1-2}
\abs{\phi_N(z)}^{-N}=\left(\abs{\phi(z)}+\mathcal O(N^{-\varkappa-1-\beta})\right)^{-N}=\abs{\phi(z)}^{-N}\left( 1+\mathcal O(N^{-\varkappa-\beta})\right)
\end{equation}
uniformly for \(z\in \mathscr D^{c}\). Combining \eqref{eq:prf-EXP1-1}, \eqref{eq:prf-EXP1-2} with \(\abs{\phi(z)}^{-N}\leq 1\) for \(z\in K_N\), we get
\begin{align*}
\abs{\frac{Q_N(z)}{\phi_N^N(z)}}^2&\leq \frac{1}{2\pi} \int_{\partial\mathscr D} \frac{\abs{Q_N(\zeta)}^2}{\abs{\zeta-z}} \abs{\d \zeta}\,\left(1+\mathcal O(N^{-\varkappa-\beta})\right)\\
&\leq \int_{\partial\mathscr D} \abs{Q_N(\zeta)}^2\abs{\d \zeta}\mathcal O \left(\frac{1+ N^{-\varkappa-\beta}}{\dist(z,\mathscr D)}\right).
\end{align*}
Further, using \eqref{eq:lem poly est2} and \eqref{eq:norm Q_N} we obtain for \(z\in K_N\)
\[\abs{\frac{Q_N(z)}{\phi_N^N(z)}}^2\leq  N\norm{Q_N}_{L^2(\mathscr D,\omega)}^2 \mathcal O \left(\frac{1+ N^{-\varkappa-\beta}}{\dist(z,\mathscr D)}\right) \leq \mathcal O\left(N^{1+2(\gamma-\varkappa-\beta)}\right)\]
We conclude
\[\abs{\frac{Q_N(z)}{\phi_N^N(z)}}=\mathcal O(N^{1/2+\gamma-\varkappa-\beta}),\qquad z\in K_N.\]
Combining this with \eqref{eq:decomp P_N} yields
\[P_N(z) = F_N(z) (1+\mathcal O(N^{\gamma-\varkappa-\beta})),\qquad z\in K_N.\]

Next, \cref{theo:Fourier-Master,theo:uni-approx-phi} allow us to replace \(\phi_N^N\) and \(\phi_N'\) by \(\phi^N\) and \(\phi'\), respectively, with a relative error of order \(\mathcal O(N^{-\varkappa-\beta})\), uniformly on \(\mathscr D^c\). Thus, for \(z\in\mathscr D^c\) we have
\[F_N(z)=\sqrt{N+1}\phi'(z)\phi^N(z)\exp\Bigl({\textstyle \sum_{j=0}^\varkappa N^{-j} \nu_{N,j}(\phi_N(z))}\Bigr)\Bigl(1+\mathcal O(N^{-\varkappa-\beta})\Bigr).\]
It remains to prove \(\nu_{N,j}(\phi_N(z))=\nu_{j}(\phi(z))+\mathcal O(N^{j-\varkappa-\beta})\) for \(z\in\mathscr D^c\). Note that using the maximum principle it is sufficient to consider \(z\in \partial \mathscr D\). We begin by observing
\begin{align*}
\max_{z\in\partial \mathscr D}\abs{\nu_{N,j}(\phi_N(z))-\nu_j(\phi(z))}&\leq \max_{z\in\partial \mathscr D} \abs{\nu_{N,j}(\phi(z))-\nu_{N,j}(\phi_N(z))}+\norm{\nu_j -\nu_{N,j}}_{L^\infty(\mathbb T)}\\
&\leq \max_{z\in\partial \mathscr D} \abs{\nu_{N,j}(\phi(z))-\nu_{N,j}(\phi_N(z))}+\mathcal O(N^{j-\varkappa-\beta}),
\end{align*}
where we have used \cref{theo:Bernstein0}.
Now let \(r_N = e^{-\frac{\log N}{N}}\), then \(\phi_N(\partial \mathscr D)\subset \mathbb D_{r_N}^c\) for large \(N\). Hence for each \(z\) we can find a circular arc \(\gamma_z\subset \mathbb D_{r_N}^c\) connecting \(\phi_N(z)\) and \(\phi(z)\), where the length of \(\gamma_z\) is bounded by \(\frac{\pi}{2}\abs{\phi_N(z)-\phi(z)}\). We conclude using the maximum principle and \cref{theo:conf-inverse}
\begin{align*}
\abs{\nu_{N,j}(\phi_N(z))-\nu_{N,j}(\phi(z))}&\leq\mfrac{\pi}{2}\norm{\nu_{N,j}'}_{L^\infty(\mathbb T_{r_N})}\abs{\phi_N(z)-\phi(z)}\\
&=\norm{\nu_{N,j}'(r_N\cdot)}_{L^\infty(\mathbb T)}\mathcal O(N^{-\varkappa-1-\beta}),\qquad z\in\partial \mathscr D.
\end{align*}
Further, we recall the relation \(D_\tau \nu_{N,j}(z) = iz \nu_{N,j}'(z)\), as displayed in \eqref{eq:polar-to-wirtinger}, then the Bernstein inequality \eqref{eq:bernstein-ineq1} and \cref{theo:Fourier-Master} yield
\[\norm{\nu_{N,j}'(r_N\cdot)}_{L^\infty(\mathbb T)}= r_N^{-1} \norm{D_\tau \nu_{N,j}(r_N\cdot)}_{L^\infty(\mathbb T)}\leq r_N^{-1} L_N \norm{\nu_{N,j}(r_N\cdot)}_{L^\infty(\mathbb T)}\leq \mathcal O(N).\]
We conclude that \(\nu_{N,j}(\phi_N(z))=\nu_{j}(\phi(z))+\mathcal O(N^{j-\varkappa-\beta})\) and therefore it holds
\begin{equation}\label{eq:prf-Main1-F_N-on-Dc}
F_N(z)=\sqrt{N+1}\phi'(z)\phi^N(z)\exp\Bigl({\textstyle \sum_{j=0}^\varkappa N^{-j} \nu_{j}(\phi(z))}\Bigr)\Bigl(1+\mathcal O(N^{-\varkappa-\beta})\Bigr),
\end{equation}
uniformly for \(z\in\mathscr D^c\). Combining \eqref{eq:decomp P_N} and \eqref{eq:prf-Main1-F_N-on-Dc} we establish the desired result.
\end{proof}

\begin{theorem}\label{theo:Expansion2}
Let \(\varkappa\in \mathbb Z_{\geq 0}\), \(\alpha\in(0,1)\), \(\mathscr D\) be a \(C^{\varkappa+1,\alpha}\)-smooth Jordan domain, and \(\omega\) a \(C^{\varkappa,\alpha}\)-smooth (non-negative) weight defined in a neighborhood of \(\overline{\mathscr D}\) such that \(\omega>0\) on \(\partial\mathscr D\).
Then there exist bounded functions \(\nu_j\), holomorphic on \(\mathbb D^e\), real-valued at infinity, and with boundary values in \(C^{\varkappa-j,\alpha}(\mathbb T)\), such that for any \(\beta\in(0,\alpha)\) it holds 
\[P_N(z) = \sqrt{N+1}\phi'(z)\phi^N(z)\exp\left({\textstyle \sum_{j=0}^\varkappa N^{-j} \nu_{j}(\phi(z))}\right) \left(1+\mathcal O(N^{1/2-\varkappa-\beta})\right),\]
uniformly for \(z\in\mathscr D^c\).
Moreover, let \(\nu_{N,j}=\mathcal F_{L_N}[\nu_j]\)  and \(\phi_N = (\mathcal F_{L_N}[\phi^{-1}])^{-1}\), then
for and any \(R>0\) it holds 
\[P_N(z) = \sqrt{N+1}\phi_N'(z)\phi_N^N(z)\exp\Bigl({\textstyle \sum_{j=0}^\varkappa N^{-j} \nu_{N,j}(\phi_N(z))}\Bigr) \Bigl(1+\mathcal O(N^{1/2+R-\varkappa-\beta})\Bigr)\]
uniformly for \(\dist(z,\mathscr D^c)\leq \frac{R\log N}{\norm{\phi'}_{L^\infty(\partial\mathscr D)} N}\).
\end{theorem}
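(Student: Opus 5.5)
We reuse the decomposition \eqref{eq:decomp P_N}: uniformly for \(z\in\mathscr D_{N,r_N}^c\) with \(r_N=e^{-\frac{A\log N}{N}}\),
\[P_N(z)=F_N(z)\Bigl(1+\mathcal O\bigl(N^{-1/2}\abs{Q_N(z)\phi_N^{-N}(z)}+N^{-\varkappa-\beta}\bigr)\Bigr).\]
Everything then reduces to a pointwise bound for \(Q_N/\phi_N^N\) in the relevant regions. For the second statement we take \(A>R\), so that \cref{lem:min-curve} places the whole region \(\dist(z,\mathscr D^c)\le \frac{R\log N}{\tnorm{\phi'}_{L^\infty(\partial\mathscr D)}N}\) inside \(\mathscr D^c_{N,r_N}\). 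Both \(\phi_N\) and \(\nu_{N,j}\circ\phi_N\) are then defined there, and \(F_N\) is exactly the model expression appearing in the claim.

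\emph{First statement, on \(\mathscr D^c\).} Instead of the Cauchy formula with \(\dist(z,\mathscr D)^{-1}\) used in \cref{theo:Expansion1}, we use a sup-norm bound. The function \(Q_N/\phi_N^N\) is holomorphic on \(\mathscr D^e\) and finite at infinity, so by the maximum principle
\[\sup_{\mathscr D^c}\abs{Q_N/\phi_N^N}\le \max_{\partial\mathscr D}\abs{Q_N}\,\max_{\partial\mathscr D}\abs{\phi_N}^{-N}.\]
By \cref{theo:uni-approx-phi}, \(\abs{\phi_N}^{-N}=1+\mathcal O(N^{-\varkappa-\beta})\) on \(\partial\mathscr D\). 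By \eqref{eq:lem poly est1} together with \eqref{eq:norm Q_N},
\[\max_{\overline{\mathscr D}}\abs{Q_N}\le cN\,\tnorm{Q_N}_{L^2(\mathscr D,\omega)}=\mathcal O(N^{1-\varkappa-\beta}).\]
Hence \(N^{-1/2}\abs{Q_N\phi_N^{-N}}=\mathcal O(N^{1/2-\varkappa-\beta})\) uniformly on \(\mathscr D^c\), which gives \(P_N=F_N(1+\mathcal O(N^{1/2-\varkappa-\beta}))\) there. We then invoke \eqref{eq:prf-Main1-F_N-on-Dc}, already proved uniformly on \(\mathscr D^c\), to replace \(F_N\) by \(\sqrt{N+1}\phi'\phi^N\exp(\sum_j N^{-j}\nu_j\circ\phi)\) at relative cost \(\mathcal O(N^{-\varkappa-\beta})\). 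This proves the first claim.

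\emph{Second statement, in the boundary layer.} We apply the maximum principle for \(Q_N/\phi_N^N\) on \(\mathscr D_{N,\rho}^c\), where \(\rho=r_N'=e^{-\frac{R'\log N}{N}}\) with \(R<R'<A\) chosen so that the layer is contained in \(\mathscr D_{N,\rho}^c\). The inclusion follows from the quantitative form of \cref{lem:min-curve}: the distance from \(\partial\mathscr D\) to \(\psi_N(\mathbb T_\rho)\) exceeds \(\frac{R\log N}{\tnorm{\phi'}N}\). On \(\partial\mathscr D_{N,\rho}=\psi_N(\mathbb T_\rho)\) we have \(\abs{\phi_N}^{-N}=\rho^{-N}=N^{R'}\). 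We also need \(\abs{Q_N}\) on that curve, which lies slightly inside \(\mathscr D\) (it sits inside \(\mathscr D\) when \(\rho<1\) and \(N\) is large), so again \(\abs{Q_N}\le \max_{\overline{\mathscr D}}\abs{Q_N}=\mathcal O(N^{1-\varkappa-\beta})\). Combining,
\[\sup_{\mathscr D_{N,\rho}^c} N^{-1/2}\abs{Q_N/\phi_N^N}=\mathcal O(N^{1/2+R'-\varkappa-\beta}).\]
Since \(\beta<\alpha\) is arbitrary and \(R'\) can be taken arbitrarily close to \(R\), we absorb \(R'-R\) by slightly lowering \(\beta\) (choose \(\beta'\in(\beta,\alpha)\) and \(R'=R+\beta'-\beta\)). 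Inserting this into \eqref{eq:decomp P_N} yields the second formula directly, with \(F_N\) left in its \(\phi_N,\nu_{N,j}\) form.

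\emph{Main obstacle.} The delicate point is the geometric bookkeeping in the second part. We must check that the boundary-layer region sits inside \(\mathscr D^c_{N,\rho}\) with \(\rho^{-N}\) only \(N^{R+o(1)}\), using the constant \(\tnorm{\phi'}_{L^\infty(\partial\mathscr D)}\) sharply, as in the proof of \cref{lem:min-curve} with \(\varepsilon\to0\). We must also confirm that the lower bound \(\abs{F_N}\gtrsim\sqrt N\abs{\phi_N}^N\) used to derive \eqref{eq:decomp P_N} holds on that region. The remaining steps are direct combinations of the cited estimates.
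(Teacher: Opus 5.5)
Your proof is correct and follows the paper's argument. The steps match: the decomposition \eqref{eq:decomp P_N}, the maximum principle for \(Q_N/\phi_N^N\) on \(\mathscr D^c_{N,\rho}\) with \(\rho=e^{-R'\log N/N}\) (the paper writes \(R'=(1+\varepsilon)R\) and uses \cref{lem:min-curve} for the inclusion), the bound \(\max_{\overline{\mathscr D}}\abs{Q_N}=\mathcal O(N^{1-\varkappa-\gamma})\) from \eqref{eq:lem poly est1} and \eqref{eq:norm Q_N}, and absorbing \(R'-R\) into \(\beta\). The only difference is the first formula: you prove it directly by the maximum principle on \(\mathscr D^c\), using \(\abs{\phi_N}^{-N}=1+\mathcal O(N^{-\varkappa-\beta})\) on \(\partial\mathscr D\), whereas the paper obtains it from the second formula on \(\mathscr D^c\) combined with \eqref{eq:prf-Main1-F_N-on-Dc}; both are equally valid.
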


\begin{proof}
We notice that the first asymptotic formula immediately follows from the second one by applying \eqref{eq:prf-Main1-F_N-on-Dc}. Therefore, it is sufficient to establish the second estimate.

Let \(\varepsilon>0\) be small and  \(r_N = e^{-\frac{(1+\varepsilon) R \log N}{ N}}\), then for large \(N\) \cref{lem:min-curve} gives
\begin{equation}\label{eq:prf-EXP2-1}
K_N\coloneqq \left\{z\,\left|\, \dist(z,\mathscr D^c)\leq \mfrac{R\log N}{\norm{\phi'}_{L^\infty(\partial\mathscr D)} N}\right\}\right.\subset \mathscr D_{N,r_N}^c.
\end{equation}
The polynomial \(Q_N\) has degree at most \(N-1\), hence \(\frac{Q_N}{\phi_N^N}\) vanishes at infinity. Using the maximum principle we observe
\[\max_{z\in K_N}\abs{\frac{Q_N(z)}{\phi_N^N(z)}}\leq \max_{z\in \mathscr D_{N,r_N}^c}\abs{\frac{Q_N(z)}{\phi_N^N(z)}}=\max_{\abs{\phi_N(z)}=r_N}
r_N^{-N}\abs{Q_N(z)}\leq \max_{z\in \overline{\mathscr D}} N^{(1+\varepsilon)R}\abs{Q_N(z)}.\]
 Now \eqref{eq:lem poly est1} together with   \eqref{eq:norm Q_N}  yields for any \(\gamma\in(0,\alpha)\) 
\[\max_{z\in \overline{\mathscr D}}\abs{Q_N(z)}=\mathcal O(N \norm{Q_N}_{L^2(\mathscr D,\omega)})=\mathcal O(N^{1-\varkappa-\gamma}).\]
Thus, if we choose \(\gamma\) and \(\varepsilon\) such that \(\gamma-\varepsilon R=\beta\) then it follows 
\[\frac{Q_N(z)}{\phi_N^N(z)}=\mathcal O\bigl(N^{(1+\varepsilon) R+1 -\varkappa-\gamma}\bigr)=\mathcal O\bigl(N^{R+1-\varkappa-\beta}\bigr),\qquad z\in   K_N,\]
which in combination with \eqref{eq:decomp P_N} provides the second asymptotic formula.
\end{proof}

\begin{proof}[Proof of \cref{thm:Main1,thm:Main2}] 
We observe that \cref{thm:Main2} follows directly from \cref{theo:Expansion2}. To establish \cref{thm:Main1}, we distinguish two cases. If \(K\cap \partial \mathscr D \neq \emptyset\), the claimed error bound follows directly from \cref{theo:Expansion2}. If \(K\cap \partial \mathscr D = \emptyset\), we apply \cref{theo:Expansion1} with some \(\beta'\in (\beta,\alpha)\) and \(\gamma \in(0,\beta'-\beta)\). For all sufficiently large \(N\), the set \(K\) is contained in \(\{z:\dist(z,\mathscr D)\geq N^{-2\gamma}\}\). The resulting error term is \(\mathcal O(N^{\gamma-\varkappa-\beta'})\), and hence also \(\mathcal O(N^{-\varkappa-\beta})\).
\end{proof}

\begin{proof}[Proof of \cref{coro:main}]
The key observation is, that the asymptotic expansions in \cref{theo:Expansion1,theo:Expansion2} do not admit any roots, wherever they are valid. Therefore, \cref{theo:Expansion1} directly implies the first part of \cref{coro:main}, regarding the case \(\varkappa+\alpha\leq 1/2\). Turning to \(\varkappa+\alpha > 1/2\), we first notice that for large \(N\), all roots of \(P_N\) have to be inside of \(\overline{\mathscr D}\), due to \cref{theo:Expansion2}. We define \(d_N = \min\{\dist(z,\mathscr D^c)\,|\, P_N(z)=0\}\) and set \(\Omega_{N,R}\) to be the set of all \(z\) satisfying \(\dist(z,\mathscr D^c)\leq \frac{R\log N}{\norm{\phi'}_{L^\infty(\partial\mathscr D)} N}\). Then for any \(R< 1/2-\varkappa-\beta\) and large \(N\), the orthogonal polynomial \(P_N\) does not have any roots in \(\Omega_{N,R}\). Consequently, 
\[\liminf_{N\to \infty} \frac{d_N N}{\log N}\geq \frac{R}{\norm{\phi'}_{L^\infty(\partial\mathscr D)}},\]
for all \(R\leq 1/2-\varkappa-\beta\), which gives the desired result.
\end{proof}

\appendix
\crefalias{section}{appendix}
\addtocontents{toc}{\protect\addvspace{1em}}

\section{Proof of Theorem 2.1}\label{sec:App-add-proofs}
The uniform estimate \eqref{eq:theo-Bernstein-uni1} is a classical result within Fourier approximation theory. It follows directly from \cite[Thm.~13.6]{zygmund1959trigseries} and \cite[Rem.~(e), p.~120]{zygmund1959trigseries}. Hence it remains to prove \eqref{eq:theo-Bernstein-uni2}.

We first consider the case \(\varkappa=0\). Let \(K_N(\zeta)=\sum_{k=-N}^N\zeta^k\) for \(\zeta\in\mathbb T\) denote the \(N\)-th Dirichlet kernel. Then the kernel is bounded by \(\abs{K_N(\zeta)}\leq 2\abs{1-\zeta}^{-1}\) and it recovers the Fourier truncation of \(f\) in terms of the convolution
\begin{equation}\label{eq:prf-Bernstein-convo}
\mathcal F_N[f](\zeta)=\mfrac{1}{2\pi}\int_{\mathbb T} f(\zeta\overline{w})K_N(w)\abs{\d w},\qquad \zeta\in\mathbb T.
\end{equation}
Since \(\frac{1}{2\pi}\int_{\mathbb T}K_N(w)\abs{\d w}=1\), it follows
\begin{equation}\label{eq:prf-Bernstein-0}
\mathcal F_N[f](\zeta)=f(\zeta) +\mfrac{1}{2\pi}\int_\mathbb T (f(\zeta\overline w)-f(\zeta))K_N(w)\abs{\d w},\qquad \zeta\in\mathbb T,
\end{equation}
and therefore for some \(c_\alpha>0\)
\[\norm{\mathcal F_N[f]}_{L^\infty(\mathbb T)} \leq \norm{f}_{L^\infty(\mathbb T)}+\frac{\hnorm{f}_{\mathbb T,\alpha}}{\pi} \int_{\mathbb T}\abs{1-w}^{\alpha-1}\abs{\d w} \leq c_\alpha\norm{f}_{C^{0,\alpha}(\mathbb T)}.\]
For \(\zeta,\eta\in\mathbb T\), we consider
\begin{equation}\label{eq:prf-Bernstein--1}
\begin{split}
\abs{f(\zeta\overline{w})-f(\zeta)-f(\eta\overline{w})+f(\eta)}&\leq 2\hnorm{f}_{\mathbb T,\alpha}\min\{\abs{1-w}^\alpha,\abs{\zeta-\eta}^\alpha\}\\
&\leq 2\hnorm{f}_{\mathbb T,\alpha}\abs{1-w}^{\alpha-\beta}\abs{\zeta-\eta}^\beta,
\end{split}
\end{equation}
where we have used the interpolation \(\min(a,b)\leq a^{1-\gamma}b^\gamma\) for \(\gamma=\beta/\alpha\in(0,1)\). Then, for some \(c_{\alpha,\beta}>0\) it follows from \eqref{eq:prf-Bernstein-0}
\begin{align*}
\abs{\mathcal F_N[f](\zeta)-\mathcal F_N[f](\eta)}
&\leq \hnorm{f}_{\mathbb T,\alpha}\abs{\zeta-\eta}^\alpha+\frac{\hnorm{f}_{\mathbb T,\alpha}}{\pi}\abs{\zeta-\eta}^\beta\int_{\mathbb T}\abs{1-w}^{\alpha-\beta-1}\abs{\d w}\\
&\leq c_{\alpha,\beta}\hnorm{f}_{\mathbb T,\alpha}\abs{\zeta-\eta}^\beta,
\end{align*}
where the integral is finite since \(\alpha-\beta>0\). We conclude
\[\norm{\mathcal F_N[f]}_{C^{0,\beta}(\mathbb T)}
\leq c_{\alpha,\beta}\norm{f}_{C^{0,\alpha}(\mathbb T)}.\]
For general \(\varkappa\), we use
\(D_\tau^j \left(\mathcal F_N[f]\right)=\mathcal F_N[D_\tau^jf]\) for \(0\leq j\leq \varkappa\) and apply the preceding estimate to the derivatives of \(f\).
This gives
\[\norm{\mathcal F_N[f]}_{C^{\varkappa,\beta}(\mathbb T)}
\leq\sum_{j=0}^\varkappa\norm{\mathcal F_N[D_\tau^j f]}_{C^{0,\beta}(\mathbb T)}\leq 2c_{\alpha,\beta}\norm{f}_{C^{\varkappa,\alpha}(\mathbb T)},\]
where the constant is independent of \(f\) and \(N\). This completes the proof.

\section{Hölder Regularity of the Exterior Herglotz Transform}\label{app:herglotz}
The proof of the following proposition adapts ideas from \cite[Ch.~3.4]{privalov1956randeigenschaften}, where similar results are established for Cauchy-type integrals. Related results can further be found in \cite[Ch.~1.5]{Gakhov1966}.

\begin{proposition}
Let \(f\in C^{\varkappa,\alpha}(\mathbb T)\), where \(\varkappa\in\mathbb Z_{\geq 0}\) and \(\alpha\in(0,1)\). Then, for \(0\leq j\leq \varkappa\), the derivative \(\Herg[f]^{(j)}\) extends continuously to \(\mathbb T\). Moreover, \(\Herg[f]\in C^{\varkappa,\alpha}(\mathbb T)\) and
\[\norm{\Herg[f]}_{C^{\varkappa,\alpha}(\mathbb T)}
\leq C_{\varkappa,\alpha}\norm{f}_{C^{\varkappa,\alpha}(\mathbb T)},\]
where \(C_{\varkappa,\alpha}>0\) is independent of \(f\).
\end{proposition}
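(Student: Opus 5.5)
The plan is to reduce to \(\varkappa=0\) using the fact that the Herglotz transform commutes with angular differentiation, and then prove a Privalov-type estimate for the conjugate-function part. On \(\mathbb D^e\), writing \(z=\rho e^{i\theta}\), the kernel \(\frac{z+\zeta}{z-\zeta}\) depends only on \(z\overline\zeta\). Hence \(D_\tau \Herg[f]=\Herg[D_\tau f]\) for \(f\in C^1(\mathbb T)\), which one sees by rotating the variable of integration. By \eqref{eq:polar-to-wirtinger}, \(\Herg[f]'(z)=-iz^{-1}D_\tau\Herg[f](z)\). Iterating, each \(\Herg[f]^{(j)}\) with \(j\le\varkappa\) is a finite combination \(\sum_{s\le j}c_{j,s}z^{-j}D_\tau^s\Herg[f]=\sum_{s\le j}c_{j,s}z^{-j}\Herg[D_\tau^sf]\), where the factors \(z^{-j}\) are smooth near \(\mathbb T\). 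It therefore suffices to show the following for \(g\in C^{0,\alpha}(\mathbb T)\): \(\Herg[g]\) extends continuously to \(\mathbb T\), and its boundary function satisfies \(\tnorm{\Herg[g]}_{C^{0,\alpha}(\mathbb T)}\le C_\alpha\tnorm g_{C^{0,\alpha}(\mathbb T)}\). Applying this to \(g=D_\tau^sf\) for \(s\le\varkappa\) gives the full norm bound.

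For \(\varkappa=0\), I would write
\[\Herg[g](z)=g(z/\abs z)+\mfrac1{2\pi}\int_{\mathbb T}\bigl(g(\zeta)-g(z/\abs z)\bigr)\frac{z+\zeta}{z-\zeta}\abs{\d\zeta}-\bigl(1-P_z\bigr)\text{-type correction},\]
more precisely using \(\frac1{2\pi}\int_{\mathbb T}\frac{z+\zeta}{z-\zeta}\abs{\d\zeta}=1\) for \(\abs z>1\), which follows from \eqref{eq:herglotz0}. This gives \(\Herg[g](z)=g(\xi)+\frac1{2\pi}\int(g(\zeta)-g(\xi))\frac{z+\zeta}{z-\zeta}\abs{\d\zeta}\) with \(\xi=z/\abs z\). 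For \(z\in\mathbb D^e\) near \(\xi\), \(\abs{z-\zeta}\gtrsim\abs{\xi-\zeta}\), so the integrand is dominated by \(C\tnorm g_{C^{0,\alpha}}\abs{\zeta-\xi}^{\alpha-1}\), which is integrable. Dominated convergence then yields the continuous extension. The boundary value is \(g(\xi)+\frac1{2\pi}\int(g(\zeta)-g(\xi))\frac{\xi+\zeta}{\xi-\zeta}\abs{\d\zeta}\), and the sup bound follows at once from the same majorant.

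The main obstacle is the Hölder seminorm of this singular integral \(T g(\xi)=\frac1{2\pi}\int(g(\zeta)-g(\xi))\frac{\xi+\zeta}{\xi-\zeta}\abs{\d\zeta}\). I would follow the classical Privalov argument. Fix \(\xi_1,\xi_2\in\mathbb T\) with \(h=\abs{\xi_1-\xi_2}\) small, and split \(\mathbb T\) into the near arc \(\{\abs{\zeta-\xi_1}<2h\}\) and its complement.

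On the near arc, I would bound each of \(Tg(\xi_1)\) and \(Tg(\xi_2)\) separately. Each is dominated by \(\int_{\abs{\zeta-\xi_k}<3h}\abs{\zeta-\xi_k}^{\alpha-1}\lesssim h^\alpha\).

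On the far arc, I would write the difference as
\[\int\bigl(g(\zeta)-g(\xi_1)\bigr)\Bigl(\tfrac{\xi_1+\zeta}{\xi_1-\zeta}-\tfrac{\xi_2+\zeta}{\xi_2-\zeta}\Bigr)+\bigl(g(\xi_2)-g(\xi_1)\bigr)\int_{\text{far}}\tfrac{\xi_2+\zeta}{\xi_2-\zeta}.\]
For the first term, the kernel difference is \(\lesssim h\abs{\zeta-\xi_1}^{-2}\), so it contributes \(\lesssim h\int_{\abs{\zeta-\xi_1}>2h}\abs{\zeta-\xi_1}^{\alpha-2}\lesssim h^\alpha\). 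For the second term, the principal-value-type integral \(\int_{\text{far}}\frac{\xi_2+\zeta}{\xi_2-\zeta}\abs{\d\zeta}\) is uniformly bounded. Its real part vanishes by symmetry on symmetric arcs, and the arc is only slightly asymmetric about \(\xi_2\), costing an \(\mathcal O(1)\) error. That term is therefore \(\lesssim h^\alpha\).

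For \(h\) bounded below, the sup bound suffices. Combining the near-arc and far-arc bounds gives \([Tg]_{\mathbb T,\alpha}\le C_\alpha\tnorm g_{C^{0,\alpha}}\), which completes the base case and hence the proposition.
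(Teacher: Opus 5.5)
Your proof is correct. The reduction to $\varkappa=0$ via $D_\tau\Herg[f]=\Herg[D_\tau f]$ is the same as in the paper; the difference lies in the base case.

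\textbf{Your route.} You obtain the boundary extension by dominated convergence. You then prove the Hölder bound directly on $\mathbb T$ by the textbook Privalov near/far splitting. This forces you to control the truncated conjugate-kernel integral $\int_{\abs{\zeta-\xi_1}\ge 2h}\frac{\xi_2+\zeta}{\xi_2-\zeta}\abs{\d\zeta}$ by a symmetry-cancellation argument.

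\textbf{The paper's route.} The paper instead proves the non-tangential estimate $\abs{\Herg[f](z)-G(\eta)}\le c_\alpha'\hnorm{f}_{\mathbb T,\alpha}\abs{z-\eta}^\alpha$ for $z$ in the cone $T(\eta)$. This needs only absolute-value bounds, because $\Herg[f](z)$ and $G(\eta)$ are written with the same subtracted constant $f(\eta)$. It then compares $G(\eta)$ and $G(\mu)$ through the single exterior point $z=(1+\abs{\eta-\mu})\eta$, which lies in $T(\eta)\cap T(\mu)$.

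\textbf{What each buys.} In the paper's argument the only cancellation used is the exact identity $\frac1{2\pi}\int_{\mathbb T}\frac{z+\zeta}{z-\zeta}\abs{\d\zeta}=1$ for $\abs z>1$. No truncated principal-value-type integral has to be estimated, and the same cone estimate also gives the continuous extension together with a rate of non-tangential convergence. Your argument has the advantage of staying entirely on the circle, so it applies to singular integrals that have no holomorphic extension at hand.

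\textbf{Minor corrections.}
\begin{enumerate}
\item For $\xi,\zeta\in\mathbb T$ the kernel is purely imaginary: $\frac{\xi+\zeta}{\xi-\zeta}=i\cot(\phi/2)$ with $\zeta=\xi e^{i\phi}$. So it is the oddness of this imaginary part, not the real part, that makes the integral over arcs symmetric about $\xi_2$ vanish. Your $\mathcal O(1)$ conclusion is unaffected.
\item The base point $\xi=z/\abs z$ moves with $z$, so substitute $\zeta=\xi w$ before invoking dominated convergence. This gives the fixed majorant $3\hnorm{g}_{\mathbb T,\alpha}\abs{1-w}^{\alpha-1}$ for $1\le\abs z\le 2$.
\item The stray ``$(1-P_z)$-type correction'' in your first display should be dropped; no correction term is needed.
\end{enumerate}
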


\begin{proof}
We first consider the case \(\varkappa=0\). Since \(\frac{1}{2\pi}\int_{\mathbb T}\frac{z+\zeta}{z-\zeta}\abs{\d\zeta}=1\) for \(\abs{z}>1\), we may write, for any \(\eta\in\mathbb T\),
\begin{equation}\label{eq:prf-Herg1}
\Herg[f](z)=f(\eta)+\mfrac{1}{2\pi}\int_{\mathbb T} \left(f(\zeta)-f(\eta)\right)\frac{z+\zeta}{z-\zeta}\abs{\d\zeta}.
\end{equation}
The integral remains absolutely convergent for \(z=\eta\), since for some \(c_\alpha>0\)
\begin{equation}\label{eq:prf-Herg2}
\mfrac{1}{2\pi}\int_{\mathbb T}\abs{f(\zeta)-f(\eta)} \frac{\abs{\eta+\zeta}}{\abs{\eta-\zeta}}\abs{\d\zeta}
\leq \frac{\hnorm{f}_{\mathbb T,\alpha}}{\pi} \int_{\mathbb T}\abs{\eta-\zeta}^{\alpha-1}\abs{\d\zeta}
\leq c_\alpha\hnorm{f}_{\mathbb T,\alpha}.
\end{equation}
We therefore define
\begin{equation}\label{eq:prop herglotz def H on T}
G(\eta)\coloneqq f(\eta)+\mfrac{1}{2\pi}\int_{\mathbb T} \left(f(\zeta)-f(\eta)\right) \frac{\eta+\zeta}{\eta-\zeta}\abs{\d\zeta},\qquad \eta\in\mathbb T.
\end{equation}
In particular,
\(\tnorm{G}_{L^\infty(\mathbb T)} \leq c_\alpha\tnorm{f}_{C^{0,\alpha}(\mathbb T)}\) if we additionally assume \(c_\alpha\geq 1\). For \(\eta\in\mathbb T\), define the non-tangential domain
\[ T(\eta)\coloneqq \left\{z\in\mathbb D^e\,|\, \abs{z-\eta}\leq2(\abs{z}-1)\right\}. \]
If \(z\in T(\eta)\) and  \(\delta\coloneqq \abs{z-\eta}\), then \(\abs{z-\zeta}\geq\abs{z}-1\geq\delta/2\) for every \(\zeta\in\mathbb T\). Then we see
\[\frac{z+\zeta}{z-\zeta}-\frac{\eta+\zeta}{\eta-\zeta}=\frac{2\zeta}{z-\zeta}-\frac{2\zeta}{\eta-\zeta}=\frac{2\zeta(\eta-z)}{(z-\zeta)(\eta-\zeta)},\]
and obtain by using \eqref{eq:prf-Herg1}
\begin{align*}
\abs{\Herg[f](z)-G(\eta)}
&\leq\frac{1}{2\pi}\int_\mathbb T \abs{f(\zeta)-f(\eta)}\abs{\frac{z+\zeta}{z-\zeta}-\frac{\eta+\zeta}{\eta-\zeta}}\abs{\d\zeta}\\
&\leq\mfrac{\hnorm{f}_{\mathbb T,\alpha}\delta}{\pi}\int_{\mathbb T}\frac{\abs{\d\zeta}}{\abs{z-\zeta}\abs{\eta-\zeta}^{1-\alpha}}.
\end{align*}
We split \(\mathbb T\) into the sets \(S_1\) and \(S_2\) defined by  \(\abs{\zeta-\eta}\leq2\delta\) and \(\abs{\zeta-\eta}>2\delta\), respectively, and observe
\[\abs{z-\zeta}\geq \begin{cases} \frac{\delta}{2},&\zeta \in S_1,\\ \frac{\abs{\zeta-\eta}}2,& \zeta \in S_2.
\end{cases}\]
Splitting the remaining integral along \(S_1\) and \(S_2\), and using the above estimates yields for some \(c_\alpha'>0\)
\[\int_{\mathbb T} \frac{\abs{\d\zeta}} {\abs{z-\zeta}\abs{\eta-\zeta}^{1-\alpha}}
\leq c_\alpha'\delta^{\alpha-1}.\]
We conclude
\begin{equation}\label{eq:prop herglotz nontangential}
\abs{\Herg[f](z)-G(\eta)}
\leq c_\alpha'\hnorm{f}_{\mathbb T,\alpha} \abs{z-\eta}^\alpha,\qquad z\in T(\eta).
\end{equation}
Note that to this point we have shown that \(\Herg[f]\) admits \(G\) as non-tangential limit.

Now let \(\eta,\mu\in\mathbb T\) and \(\sigma\coloneqq \abs{\eta-\mu}\). The point \(z=(1+\sigma)\eta\) belongs to \(T(\eta)\cap T(\mu)\), while \(\abs{z-\eta}\leq\sigma\) and \(\abs{z-\mu}\leq2\sigma\). Thus,
\[\abs{G(\eta)-G(\mu)}\leq \abs{G(\eta)-\Herg[f](z)}+\abs{\Herg[f](z)-G(\mu)}
\leq 3c_\alpha'\hnorm{f}_{\mathbb T,\alpha} \abs{\eta-\mu}^\alpha.\]
In particular, we have shown that \(G\) is \(\alpha\)-Hölder continuous on \(\mathbb T\).
Using \eqref{eq:prf-Herg2} and \eqref{eq:prop herglotz def H on T} we can bound the \(L^\infty\)-norm of \(G\) and obtain for some \(C_{0,\alpha}>0\) 
\begin{equation}\label{eq:prop herglotz holder estimate}
\norm{G}_{C^{0,\alpha}(\mathbb T)}
\leq C_{0,\alpha}\norm{f}_{C^{0,\alpha}(\mathbb T)}.
\end{equation}

To verify continuity from the exterior, let \(z\to\eta\) and set \(\mu=z/\abs{z}\). Then \(z\in T(\mu)\) as well as \(\max\{\abs{z-\mu},\abs{\mu-\eta}\}\leq\abs{z-\eta}\). Hence, by \eqref{eq:prop herglotz nontangential} and \eqref{eq:prop herglotz holder estimate}, \(\Herg[f](z)\to G(\eta)\). Thus, \(\Herg[f]\) extends continuously onto the circle and coincides there with \(G\), proving the result for \(\varkappa=0\).

Let us turn to a general \(\varkappa\). We notice that  tangential differentiation commutes with the
Herglotz transform, that is \( D_\tau(\Herg[f])=\Herg[D_\tau f]\).
Using \eqref{eq:polar-to-wirtinger} we get for \(j=1,\dots,\varkappa\) 
\[\Herg[f]^{(j)}(\zeta) =(-i\zeta^{-1} D_\tau)^j \Herg[f](\zeta)=\zeta^{-j}\sum_{m=1}^ja_{m,j}  \Herg[D^m_\tau f](\zeta), \]
for suitable coefficients \(a_{m,j}\). Applying the case \(\varkappa=0\) to the tangential derivatives of \(f\), we conclude that \(\Herg[f]^{(j)}\) extends continuously onto \(\mathbb T\). Furthermore, we have
\[\norm{\Herg[f]}_{C^{\varkappa,\alpha}(\mathbb T)}
\leq \sum_{j=0}^\varkappa \norm{D_\tau^j\Herg[f]}_{C^{0,\alpha}(\mathbb T)}\leq C_{0,\alpha}\sum_{j=0}^\varkappa \norm{D_\tau^jf}_{C^{0,\alpha}(\mathbb T)}\leq  C_{\varkappa,\alpha}\norm{f}_{C^{\varkappa,\alpha}(\mathbb T)},\]
for some appropriate \(C_{\varkappa,\alpha}>0.\)
\end{proof}

\section*{Acknowledgments}
I am grateful to Aron Wennman for his valuable feedback, which greatly improved the quality and presentation of this paper. I also thank my office mates and close friends, Sampad Lahiry and Max van Horssen, for keeping me company during the long writing process.

The author is supported by the Odysseus Grant G0DDD23N from the Research Foundation Flanders (FWO).

\section*{Declaration of AI Usage}
All main ideas and constructions are due to the author. AI was used to assist in proving local statements or improving existing proofs, primarily by suggesting standard arguments. Moreover, AI was used to proofread the paper and improve its language.


\begin{thebibliography}{99}

\bibitem{Akemann2021Gegenbauer}
G.~Akemann, T.~Nagao, I.~Parra, and G.~Vernizzi,
\emph{Gegenbauer and other planar orthogonal polynomials on an ellipse in
the complex plane},
Constr. Approx. 53 (2021), no.~3, 441--478.

\bibitem{Ameur2011fluctuations}
Y.~Ameur, H.~Hedenmalm, and N.~Makarov,
\emph{Fluctuations of eigenvalues of random normal matrices},
Duke Math. J. 159 (2011), no.~1, 31--81.

\bibitem{BeckermannStylianopoulos2018Grunsky}
B.~Beckermann and N.~Stylianopoulos,
\emph{Bergman orthogonal polynomials and the Grunsky matrix},
Constr. Approx. 47 (2018), no.~2, 211--235.

\bibitem{carleman1922classic}
T.~Carleman,
\emph{\"Uber die Approximation analytischer Funktionen durch lineare
Aggregate von vorgegebenen Potenzen},
Ark. Mat. Astr. Fys. 17 (1922).

\bibitem{cronvall2025direct}
J.~Cronvall and A.~Wennman,
\emph{A direct approach to soft and hard edge universality for random
normal matrices},
preprint (2025), arXiv:2511.18628.

\bibitem{DeVoreLorentz1993constapprox}
R.~A.~DeVore and G.~G.~Lorentz,
\emph{Constructive approximation},
Grundlehren der mathematischen Wissenschaften, vol.~303,
Springer-Verlag, Berlin, 1993.

\bibitem{DragnevMinaDiaz2010Carleman}
P.~Dragnev and E.~Mi\~na-D\'iaz,
\emph{Asymptotic behavior and zero distribution of Carleman orthogonal
polynomials},
J. Approx. Theory 162 (2010), no.~11, 1982--2003.

\bibitem{gaier1987complex}
D.~Gaier,
\emph{Lectures on complex approximation},
translated from German by R.~McLaughlin,
Birkh\"auser Boston, Boston, MA, 1987.

\bibitem{Gakhov1966}
F.~D.~Gakhov,
\emph{Boundary value problems},
I.~N.~Sneddon (ed.),
International Series of Monographs in Pure and Applied Mathematics,
vol.~85, Pergamon Press, 1966.

\bibitem{GustafssonEtAl2009Archipelago}
B.~Gustafsson, M.~Putinar, E.~B.~Saff, and N.~Stylianopoulos,
\emph{Bergman polynomials on an archipelago: estimates, zeros and shape
reconstruction},
Adv. Math. 222 (2009), no.~4, 1405--1460.

\bibitem{Hedenmalm2024softRHP}
H.~Hedenmalm,
\emph{Soft Riemann--Hilbert problems and planar orthogonal polynomials},
Comm. Pure Appl. Math. 77 (2024), no.~4, 2413--2451.

\bibitem{Hedenmalm2013Coulomb}
H.~Hedenmalm and N.~Makarov,
\emph{Coulomb gas ensembles and Laplacian growth},
Proc. Lond. Math. Soc. (3) 106 (2013), no.~4, 859--907.

\bibitem{Hedenmalm2020spectral}
H.~Hedenmalm and A.~Wennman,
\emph{Off-spectral analysis of Bergman kernels},
Comm. Math. Phys. 373 (2020), no.~3, 1049--1083.

\bibitem{hedenmalm2017planar}
\bysame,
\emph{Planar orthogonal polynomials and boundary universality in the
random normal matrix model},
Acta Math. 227 (2021), no.~2, 309--406.

\bibitem{hedenmalm2024riemann}
\bysame,
\emph{Riemann--Hilbert hierarchies for hard edge planar orthogonal
polynomials},
Amer. J. Math. 146 (2024), no.~2, 371--403.

\bibitem{korovkin1947asymptotic}
P.~P.~Korovkin,
\emph{The asymptotic representation of polynomials orthogonal over
a region},
Dokl. Akad. Nauk SSSR (N.S.) 58 (1947), 1883--1885.

\bibitem{Lubinsky2010universality}
D.~S.~Lubinsky,
\emph{Universality type limits for Bergman orthogonal polynomials},
Comput. Methods Funct. Theory 10 (2010), no.~1, 135--154.

\bibitem{MinaDiaz2008AsymptoticIntegral}
E.~Mi\~na-D\'iaz,
\emph{An asymptotic integral representation for Carleman orthogonal
polynomials},
Int. Math. Res. Not. IMRN (2008), 38~pp.

\bibitem{MinaDiaz2023Bergman}
\bysame,
\emph{Orthogonal polynomials in weighted Bergman spaces},
J. Approx. Theory 296 (2023), Paper No.~105972, 22~pp.

\bibitem{plonka2018numerical}
G.~Plonka, D.~Potts, G.~Steidl, and M.~Tasche,
\emph{Numerical Fourier analysis},
Springer, 2018.

\bibitem{pommerenke1975univalent}
C.~Pommerenke,
\emph{Univalent functions},
Studia Mathematica/Mathematische Lehrb\"ucher, vol.~25,
Vandenhoeck \& Ruprecht, 1975;
with a chapter on quadratic differentials by G.~Jensen.

\bibitem{pommerenke2013boundary}
\bysame,
\emph{Boundary behaviour of conformal maps},
Springer Science \& Business Media, 2013.

\bibitem{privalov1956randeigenschaften}
I.~I.~Privalov,
\emph{Randeigenschaften analytischer Funktionen},
Deutscher Verlag der Wissenschaften, 1956.

\bibitem{Riesz1914}
M.~Riesz,
\emph{Eine trigonometrische Interpolationsformel und einige Ungleichungen
f\"ur Polynome},
Jahresber. Deutsch. Math.-Verein. 23 (1914), 354--368.

\bibitem{Stechkin1948}
S.~B.~Stechkin,
\emph{A generalization of some inequalities of S.~N.~Bernstein},
Dokl. Akad. Nauk SSSR 60 (1948), no.~9, 1511--1514 (Russian).

\bibitem{Stylianopoulos2013StrongAsymptotics}
N.~Stylianopoulos,
\emph{Strong asymptotics for Bergman polynomials over domains with
corners and applications},
Constr. Approx. 38 (2013), no.~1, 59--100.

\bibitem{suetin1966ordercomparison}
P.~K.~Suetin,
\emph{Order comparison of various norms of polynomials in a complex region},
Ural. Gos. Univ. Mat. Zap. 5 (1966), no.~4, 91--100 (Russian).

\bibitem{Suetin1969AreallyOrthogonal}
P.~K.~Suetin,
\emph{Areally orthogonal polynomials and Bieberbach polynomials},
Soviet Math. Dokl. 10 (1969), 1123--1126;
English translation of Dokl. Akad. Nauk SSSR 188 (1969), no.~2, 294--296.

\bibitem{suetin1974polynomials}
\bysame,
\emph{Polynomials orthogonal over a region and Bieberbach polynomials},
American Mathematical Society, 1974.


\bibitem{Zabrodin2006Dyson}
A.~Zabrodin and P.~Wiegmann,
\emph{Large-$N$ expansion for the 2D Dyson gas},
J. Phys. A 39 (2006), no.~28, 8933--8963.

\bibitem{zygmund1959trigseries}
A.~Zygmund,
\emph{Trigonometric series}, 2nd ed.,
Cambridge Mathematical Library, 1959.


\end{thebibliography}
\end{document}